\newtheorem{theo}{Theorem}[section]
\newtheorem{prop}[theo]{Proposition}
\newtheorem{defi}[theo]{Definition}
\newtheorem{lem}[theo]{Lemma}
\newtheorem{exe}[theo]{exe}
\newtheorem{Rq}[theo]{Remark}
\newcommand{\un}{\underline}
\newcommand{\tronc}{\mathfrak{t}}
\newcommand{\TV}{TV} 
\newcommand{\Zee}{X}
\newcommand{\mutilde}{\tilde{\mu}}
\newcommand{\KK}{{\mathfrak S}}
\newcommand{\kmu}{\mathbb{K}_\mu}
\newcommand{\knu}{\mathbb{K}_\nu}
\newcommand{\tcr}{\textcolor{red}}
\newcommand{\ER}{\mathbb{R}}
\newcommand{\cim}{\partial}
\newcommand{\PE}{\mathbb{P}}
\newcommand{\ES}{\mathbb{E}}
\newcommand{\XX}{\xi}
\newcommand{\MM}{\mathcal{E}}
\newcommand{\Aa}{{\cal A}}
\numberwithin{equation}{section}
\newcommand{\vertiii}[1]{{\left\vert\kern-0.25ex\left\vert\kern-0.25ex\left\vert #1 \right\vert\kern-0.25ex\right\vert\kern-0.25ex\right\vert}}
\newcommand{\rom}[1]{(\textup{\uppercase\expandafter{\romannumeral#1}})}
\newcommand{\substackal}[1]{%
  \vcenter{%
    \Let@ \restore@math@cr \default@tag
    \baselineskip\fontdimen10 \scriptfont\tw@
    \advance\baselineskip\fontdimen12 \scriptfont\tw@
    \lineskip\thr@@\fontdimen8 \scriptfont\thr@@
    \lineskiplimit\lineskip
    \ialign{\hfil$\m@th\scriptstyle##$&$\m@th\scriptstyle{}##$\hfil\crcr
      #1\crcr
    }%
  }%
}
\newcommand\blfootnote[1]{%
  \begingroup
  \renewcommand\thefootnote{}\footnote{#1}%
  \addtocounter{footnote}{-1}%
  \endgroup
}
\newlist{hzero}{enumerate}{1}
\setlist[hzero]{label=$\mathbf{(A_0)}$:, ref=$\mathbf{(A_0)}$, wide, labelwidth=!, itemindent=!, itemindent=!, labelindent=0pt}
\newlist{hun}{enumerate}{1}
\setlist[hun]{label=$\mathbf{(A_1)}$:, ref=$\mathbf{(A_1)}$, wide, labelwidth=!, itemindent=!, labelindent=0pt}
\newlist{hvdeux}{enumerate}{1}
\setlist[hvdeux]{label=$\mathbf{(A_V)}$:, ref=$\mathbf{(A_V)}$, wide, labelwidth=!, itemindent=!, labelindent=0pt}
\newlist{hdeux}{enumerate}{1}
\setlist[hdeux]{label=$\mathbf{(A_2)}$:, ref=$\mathbf{(A_2)}$, wide, labelwidth=!, itemindent=!, labelindent=0pt}
\newlist{htrois}{enumerate}{1}
\setlist[htrois]{label=$\mathbf{(A_3)}$:, ref=$\mathbf{(A_3)}$, wide, labelwidth=!, itemindent=!, labelindent=0pt}
\newlist{hquatre}{enumerate}{1}
\setlist[hquatre]{label=$\mathbf{(A_4)}$:, ref=$\mathbf{(A_4)}$, wide, labelwidth=!, itemindent=!, labelindent=0pt}
\newlist{hmv}{enumerate}{1}
\setlist[hmv]{label=$\mathbf{(H_{MV})}$:, ref=$\mathbf{(H_{MV})}$, wide, labelwidth=!, itemindent=!, labelindent=0pt}
\newlist{htronc}{enumerate}{1}
\setlist[htronc]{label=$\mathbf{(H_{\tronc})}$:, ref=$\mathbf{(H_{\tronc})}$, wide, labelwidth=!, itemindent=!, labelindent=0pt}
\newlist{pzero}{enumerate}{1}
\setlist[pzero]{label=$\mathbf{(P_0)}$:, ref=$\mathbf{(P_0)}$, wide, labelwidth=!, itemindent=!, labelindent=0pt}
\newlist{pun}{enumerate}{1}
\setlist[pun]{label=$\mathbf{(P_1)}$:, ref=$\mathbf{(P_1)}$, wide, labelwidth=!, itemindent=!, labelindent=0pt}
\newlist{pdeux}{enumerate}{1}
\setlist[pdeux]{label=$\mathbf{(P_2)}$:, ref=$\mathbf{(P_2)}$, wide, labelwidth=!, itemindent=!, labelindent=0pt}
\begin{document}
\title{Quasi-Stationary Distributions of Interacting Dynamical Systems and their approximation}
\author[1]{{Mohamed Alfaki \textsc{Aboubacrine Assadek}}}
\author[2]{Fabien \textsc{Panloup}}
\affil[1,2]{Univ Angers, CNRS, LAREMA, SFR MATHSTIC, F-49000 Angers, France}
\date{\today}

\newgeometry{top=0cm, bottom=1.5cm}

\maketitle
\thispagestyle{empty}

\blfootnote{Email addresses:  \href{mailto:mohamedalfaki.agaboubacrineassadeck@univ-angers.fr}{mohamedalfaki.agaboubacrineassadeck@univ-angers.fr}, \href{mailto:fabien.panloup@univ-angers.fr}{fabien.panloup@univ-angers.fr}, 
 \\[1ex] \scriptsize
  This work is supported by  the Centre Henri Lebesgue ANR-11-LABX-0020-01 and by ANR RAWABRANCH.
}

\vspace{-3em}

\begin{abstract}
  \noindent
 In \cite{BCP}, the authors built and studied an algorithm based on the (self)-interaction of a dynamics with its occupation measure to approximate Quasi-Stationary Distributions (QSD) of general Markov chains conditioned to stay in a compact set. In this paper, we propose to tackle the case of McKean-Vlasov-type dynamics, \emph{i.e.} of dynamics interacting with their marginal distribution (conditioned to not be killed). In this non-linear setting, we are able to exhibit some conditions which guarantee that weak limits of these sequences of random measures are QSDs of the given dynamics. We also prove tightness results in the non-compact case.\\

\noindent  These general conditions are then applied to Euler schemes of McKean-Vlasov SDEs and in the compact case, the behavior of these QSDs when the step $h$ goes to $0$ is investigated. Our results also allow to consider some examples in the non-compact case and some new tightness criterions are also provided in this setting. Finally, we illustrate our theoretical results with several simulations.\\[1ex]
  \noindent\textbf{MSC2010:} Primary 65C20, 60B12, 60J99; Secondary: 34A34, 34E10.\\
  \noindent\textbf{Keywords:} Quasi-Stationary Distribution, McKean-Vlasov, Self-interacting dynamics.
\end{abstract}

{
\small
\hypersetup{hidelinks}
\setcounter{tocdepth}{2}
\tableofcontents
}

\newpage

\restoregeometry

\section{Introduction}

The study of the long-time behavior of stochastic processes which are absorbed at a given stopping time (usually exit time)  is now a widely studied problem in the Markovian setting.  Under adequate conditions, Markovian dynamics conditioned to stay alive converge in distribution to the so-called quasi-stationary distribution which can be viewed as an equilibrium related to normalized absorbed dynamics. We refer for instance to the seminal paper \cite{CV14} for general results which ensure convergence at an exponential rate and to  \cite{cloez_esaim} for a survey on recent contributions on the topic. For practical applications, conditioned dynamics attracted much attention for modelling of population dynamics (see \emph{e.g.} \cite{cattiaux_collet,chazottes_collet} ) and for speeding up optimization dynamics (see \emph{e.g.} \cite{lebris_lelievre} and the references therein).\\

\noindent  The question of its approximation thus becomes an important problem and a series of papers investigated mainly two ways of approximations procedures: the Fleming-Viot algorithm (see \emph{e.g.} \cite{BHM00,CT13,MM00,V11}) where the principle is to simulate a particle system induced by the dynamics and to manage the conditioning by making reborn the killed particles with an offspring being through the empirical measure of the alive particles. Another approach initiated by  \cite{BC15} (see also \cite{BGZ,BCP,benaim_champagnat_villemonais,wang_roberts}) is based on  self-interacting dynamics, \emph{i.e.}, where the distribution of the reborn particles is given by the occupation measure of the process. \\

In this paper, we address quasistationary distributions for dynamics with interaction with their distribution that we call \textit{McKean-Vlasov-type} dynamics in the sequel, and especially to their approximation. Our terminology is certainly related to the so-called McKean-Vlasov SDEs whose ``standard'' convergence to equilibrium, \emph{i.e.} to its ``standard'' invariant distribution has been widely studied in the last years (among others, see \emph{e.g.} \cite{guillin_wu} and the references therein). For absorbed McKean-Vlasov-type dynamics, the references seem to be more rare. The topic is adressed in \cite{tough_nolen} where the authors study the Fleming-Viot dynamics fo McKean-Vlasov SDEs and exhibit some conditions which ensure that for the exit time of a compact set, the  Fleming-Viot algorithm converges to the set of QSDs of such dynamics. At this stage, it is important to remark that in order to make sense, absorbed McKean-Vlasov SDEs are certainly considered by interacting with the conditioned distribution: 
\begin{equation}\label{eq:mckeansde}
 \begin{cases} &d\xi_t=b(\xi_t,\mu_t) dt+\sigma(\xi_t,\mu_t) dW_t\\
&\mu_t={\cal L}(\xi_t|\tau>t),
\end{cases}
\end{equation}
where $\tau$ denotes the exit time and $b:\ER^d\times{\cal P}(\ER^d)\rightarrow\ER^d$ and $\sigma:\ER^d\times{\cal P}(\ER^d)\rightarrow\mathbb{M}_{d,d}$ (the space of $d\times d$ real matrices) denote the drift and diffusion coefficients respectively (whose regularity will be precised later).\\

\noindent  Initially motivated by the objective to consider the self-interacting approach related to continuous-time dynamics such as \eqref{eq:mckeansde}, our work focuses in fact to general discrete-time dynamics with interaction\footnote{As mentioned later, the continuous-time setting, especially the \textit{hard-killing} problem, contains additional difficulties that we leave for a future paper. Yet, we tackle the continuous-time by a weak approach considering QSDs of McKean-Vlasov SDEs as limits of QSDs of related Euler schemes.}. In this sense, it is mainly a generalization to interacting dynamics of \cite{BCP}.  Our prototypical example is the Euler-Maruyama scheme of \eqref{eq:mckeansde} but our results may apply to other types of interacting models. Our main contributions are the following:
\begin{itemize}
\item{} For a general Polish space $\MM$,  for a family of kernels of submarkovian dynamics $(K_{\mu,\partial})_{\mu\in{\cal P}(\MM)}$ and for a stopping time $\tau$, we begin by providing a characterization of QSDs of such interacting dynamics as the fixed point of dynamics with redistribution (whose markovian kernel is denoted by $\kmu$ in the sequel).
\item{} We build and study an associated self-interacting sequence of random probabilities denoted by $(\mu_n)_{n\ge1}$ (where $\mu_n\in{\cal P}(\MM)$ is the occupation measure of the dynamics). 
\begin{itemize}
\item{} We provide Lyapunov criterions which ensure almost sure tightness of $(\mu_n)_{n\ge1}$.
\item{} Denoting by $(\tilde{\mu}^{(n)}_t)_{t\ge0}$ a function with values in ${\cal P}(\MM)$ built as an adequate interpolation of the sequence $(\mu_k)_{k\ge n}$, we show that every weak limit of the sequence\footnote{We also show that this sequence is tight for the topology of uniform convergence on compact intervals on the space ${\cal C}([0,+\infty),{\cal P}(\MM))$ of continuous functions with values in the space of probabilities on $\MM$ (endowed with the topology of weak convergence).} $(\tilde{\mu}^{(n)})_{n\ge1}$ is a solution to an ordinary differential equation (ode) on the space of probabilities.
\item{} We exhibit conditions which ensure that this limiting ode is stationary which in turn implies that the limit points of $(\mu_n)$ are almost surely QSDs. This last result (corresponding to \Cref{theo:discret}$(ii)$  is probably  the part of the paper, where the non-linearity of the dynamics generates the most difficulties. To obtain this property, we show uniform convergence properties of the limiting ode but without uniqueness assumptions (see \Cref{prop:convuniformemustar}). Note that this result is obtained under Assumption \ref{condhquatre} which is a typical assumption for ensuring uniqueness of QSDs in the linear case.
 \item{} We apply our general result to the Euler scheme of a McKean-Vlasov SDE by proving, under ellipticity assumptions and boundedness of the \textit{flat derivatives} of the coefficients, that the procedure converges\footnote{By ``converges'', we abusively mean that the sequence is tight and every weak limit is a QSD. This certainly leads to convergence when uniqueness holds but we do not address this (probably difficult) question in this paper.}. In the compact case, we are able to show that when the step $h$ goes to $0$, tightness holds (under adequate conditions) and limiting distributions (when $h\rightarrow0$) are QSDs of the McKean-Vlasov SDE. In the non-compact case, we are able to provide a result for Euler schemes of Lévy-driven SDEs \textit{coming down from infinity}\footnote{This terminology is classical in the literature of QSDs and roughly says that the dynamics can come back in a compact set in a bounded time. Such an assumption is usually required to get uniqueness of QSDs in the Markovian case.} when the driving noise has polynomial decrease at infinity. Curiously, our class of examples may contain stable driven SDEs but not the case where the driving noise is the Brownian motion (precisely, we are not able to prove the above assumption \ref{condhquatre} in this case). In the Brownian case, we still have a non trivial tightness result under a Lyapunov assumption which is for instance satisfied by Ornstein-Uhlenbeck processes with interaction (see \Cref{lem:hvdeux} combined with \Cref{lem:lowerupper}).
\end{itemize}
\item{} We end by some numerical illustrations in the same case as the one considered in \cite[Example 2.15]{tough_nolen} and show that our simulations allow to retrieve the (explicit) QSDs related to this specific case.
\end{itemize}
To end this description of our main results, let us remark that this contribution could be probably easily adapted to the continuous-time case for soft-killing SDEs but considering the continuous-time for the hard-killing problem (as a generalization of \cite{benaim_champagnat_villemonais}) seems to generate much more difficulties. In some sense, this is why we limit ourselves to a ``weak'' result which approximates QSDs of McKean-Vlasov SDEs by letting the time go to $\infty$ and then the step go to $0$.\\

\noindent \textbf{Outline.} The paper is organized as follows:  In Section \ref{setandmainres}, we detail the framework and the main assumptions, and state our main results about the convergence of the algorithm towards the QSD. We provide examples and numerical illustrations in \cref{sec:examples}. \\

The sequel of the paper is devoted to the proofs of the results. In \cref{sec:proof1}, we provide preliminary properties and show that the limiting dynamics of the (continuous-time extension of the) algorithm is a solution to a non-linear ode on the space of probabilities. In \cref{sec:proof2}, we exhibit long-time convergence properties of this non-linear ode. As mentioned before, this in some sense in this part, that the non-linearity implies the most difficulties. In \cref{subsecseptun}, we gather the results of the two previous sections to prove \cref{theo:discret}. In \cref{sec:proof3}, we prove \cref{prop:tightnessbis} which provides sufficient Lyapunov-type conditions to ensure tightness of the sequence of empirical measures. Finally, in \cref{sec:proof4}, we focus on proofs and additional results related to Euler schemes of McKean-Vlasov SDEs. We provide explicit tightness criterions and prove \cref{prop:model1} and \cref{prop:model2}.

\section{Setting and Main Results}\label{setandmainres}
\subsection{Notation and Setting}

\textbf{Notation.} Let ${\MM}$ be a Polish space equipped with its Borel $\sigma$-field ${\cal B}(\MM).$ Throughout, we let  ${\cal P}(\MM)$  denote the space of (Borel) probabilities over $\MM$ equipped with the topology of weak convergence. For all $\mu \in {\cal P}(\MM)$ and $f \in L^1(\mu)$ we write $\mu(f)$ for $\int_{\MM} f d\mu.$  Recall that $\mu_n \rightarrow \mu$ in ${\cal P}(\MM)$ provided $\mu_n(f) \rightarrow\mu(f)$ for all $f \in C_b(\MM)$ (denoting the bounded continuous functions on ${\MM}$). The space of signed measures on $\MM$ is denoted ${\cal M}_s(\MM)$ and the total variation norm is denoted by $\|\,.\|_{TV}$.    \\

\noindent A {\em  sub-Markovian kernel} on $\MM$ is a map $Q : \MM \times {\cal B}(\MM) \mapsto [0,1]$ such that for all $x \in \MM,$ $A \rightarrow Q(x, A)$ is a measure and for all $A \in {\cal B}(\MM), x \rightarrow Q(x,A)$ is measurable.  If furthermore $Q(x, \MM) = 1$ for all $x \in M,$ then $Q$ is called a {\em Markov (or Markovian) kernel}. For every bounded measurable function $f : \MM \mapsto \mathbb{R},$  and  $\mu \in {\cal P}(\MM),$  we let $Qf$ and $\mu Q$ respectively denote the map and measure defined by
$$Qf(x) = \int_{\MM} f(y) Q(x,dy)\quad \textnormal{and}\quad \mu Q(\cdot) = \int_{\MM} \mu(dx)Q(x,\cdot).$$
A (sub)-Markovian kernel is Feller if for all $f \in C_b(\MM)$, $Qf  \in C_b(\MM)$. For an integer $d\ge1$, the scalar product and the Euclidean norm on $\ER^d$ are denoted by $\langle.,.\rangle$ and $|.|$ whereas $\mathbb{M}_{d,d}$ is the space of $d\times d$ matrices. For $p>0$, we set  ${\cal P}_p(\ER^d)=\{\mu\in {\cal P}(\ER^d),\mu(|\,.\,|^p)<+\infty\}$. A map $U:{\cal P}_p(\ER^d)\rightarrow\ER$ is said to be of class ${\cal C}^1$ if there exists a jointly continuous and locally bounded map $\frac{\delta U}{\delta m}:{\cal P}_p(\ER^d)\times \ER^d\rightarrow \ER$ satisfying: $\sup_{m\in {\cal P}_p(\ER^d)}|\frac{\delta U}{\delta m}(m,x)|\lesssim(1+|x|^p)$ and for all $m_0$ and $m_1\in{\cal P}(\ER^d)$,
$$ U(m_1)-U(m_0)=\int_0^1 \int_{\ER^d} \frac{\delta U}{\delta m}(t m_1+(1-t) m_0,y) (m_1-m_0)(dy) dt.$$
In this case, we set
$$\left\|\frac{\delta U}{\delta m}(m,.)\right\|_\infty=\sup_{x\in\ER^d}\left|\frac{\delta U}{\delta m}(m,x)\right|\quad \textnormal{and}\quad 
\left[\frac{\delta U}{\delta m}(m,.)\right]_1=\sup_{x,y\in\ER^d, x\neq y} \frac{\left|\frac{\delta U}{\delta m}(m,y)-\frac{\delta U}{\delta m}(m,x)\right|}{|y-x|}.$$

\medskip

\noindent \textbf{Setting.} Let $ ({K}_{\mu,\partial})_{\mu}$ denote a family of    subMarkovian kernels on $\MM$ and let $(Y_n)_{n\ge0}$ denote the sequence defined on $\MM\cup \{\partial\}$ by: $Y_0\sim \nu_0$
\begin{equation}\label{eq:defYn}
Y_{n+1}=\begin{cases} 
Z_{n+1} & \textnormal{if $\tau>n+1$}\\
\partial &  \textnormal{if $\tau\le n+1$},
\end{cases}
\end{equation}
where 
\begin{itemize}
\item{} $\tau$ is a $({\cal F}_n)_{n\ge0}$-stopping time defined by
$$ \tau:=\inf\{n\ge 1, \Theta_n=1\},$$
where $(\Theta_n)_{n\ge1}$ is a sequence of Bernoulli random variables such that 
$$\PE(\Theta_{n+1}=1|{\cal F}_n, Y_n\in\MM)=\delta_{\nu_n}(Y_n)\quad\textnormal{with}\quad \delta_\mu(x)=1-K_{\mu,\partial}{\bf 1}(x),$$
$({\cal F}_n)_{n\ge0}$ is defined by ${\cal F}_0=\sigma(Y_0)$ and for $n\ge1$,
$${\cal F}_n=\sigma(Y_0,\ldots,Y_n, \Theta_1,\ldots,\Theta_n),$$
and 
$$\nu_n={\cal L}(Y_n|\tau>n).$$
\item{} $(Z_n)_{n\ge1}$ is a sequence of random variables with values in $\MM$ such that 
$${\cal L}(Z_{n+1}|{\cal F}_n, Y_n\in\MM)=K_{\nu_n,\partial}(Y_n,.).$$
\end{itemize}
\begin{Rq} The above definition implies in an underlying way that $\PE(\tau>n)>0$. Nevertheless, the definition is still consistent if $\PE(\tau>n)=0$ since in this case $Y_n=\partial$ almost surely.
\end{Rq}
\noindent \textbf{Model 1:} (Euler Scheme of a  Mc-Kean Vlasov diffusion killed at an exit time of a relatively compact set). Assume that $D$ is a bounded open set of $\ER^d$. Let $h>0$.   Let $(\zeta_t)_{t\ge0}$ denote a Brownian motion on $\ER^d$ or more generally a given centered L\'evy process. We also set
$$\Delta \zeta_{s,t}=\zeta_t-\zeta_s, \quad 0\le s\le t.$$
Let $\partial$ denote a cemetery point. Set ${\MM}=\bar{D}$. We define $(Y_n)_{n\ge0}$ by $Y_0\sim\mu_0\in{\cal P}(\MM)$ and
$$\forall n\ge 0, \quad Y_{n+1}=\begin{cases} Y_n+h b(Y_n,\nu_n)+\sigma(Y_n,\nu_n)\Delta \zeta_{nh,(n+1)h}&\textnormal{while $\tau>n+1$,}\\
\partial &\textnormal{if $\tau\le n+1$,}
\end{cases}
$$
where $b:\MM\times {\cal P}(\MM)$ and $\sigma:\MM\times{\cal P}(\MM)\rightarrow\mathbb{M}_{d,d}$ denote (at least jointly continuous) functions,
$$\tau:=\inf\{n\in\mathbb{N}, Y_n+h b(Y_n,\nu_n)+\sigma(Y_n,\nu_n)\Delta \zeta_{nh,(n+1)h}\in \MM^c\},$$
and 
$$\nu_n={\cal L}(Y_n|\tau>n).$$
\begin{Rq} \label{rq: model1}$\rhd$ If $\mu_0$ has support in $D$ and $\sigma$ is not degenerated (typically uniformly elliptic), $\tau$ is almost surely equal to the time where the scheme leaves $D$ (this will allow us to work on the compact space $\MM=\bar{D}$ in the sequel for the study of the exit time of the open set $D$).\\

\noindent  $\rhd$ Note that in this example, $K_{\mu,\partial}$ (denoted $K_{\mu,\partial}^{(h)}$) is defined by: for every measurable function $f:\MM\rightarrow\ER$, 
\begin{equation}\label{eq:formegeneralenoyau}
 K_{\mu,\partial}^{(h)}f(x)=\ES[f(\Upsilon_x^h){\bf 1}_{\Upsilon_x^h\in D}]\quad\textnormal{with}\quad 
\Upsilon_x^h=x+h b(x,\mu)+\sigma(x,\mu)\Delta \zeta_{0,h}.
\end{equation}
$\rhd$ This Euler scheme is related to the McKean-Vlasov diffusion:
$$
\begin{cases}
dY_t=b(Y_t,\nu_t) dt+\sigma(Y_t,\nu_t) dW_t\\
\nu_t={\cal L}(Y_t|\tau>t)
\end{cases}
$$
where $\tau:=\inf\{t\ge0, Y_t\in D^c\}$.
\end{Rq}
\noindent \textbf{Model 2:}  (Euler Scheme of a  McKean-Vlasov diffusion killed at an exit time of an unbounded set)  Excepted the undboundness, this is exactly the same model. Nevertheless, we distinguish them in order to emphasize the specific difficulties involved by the non-compactness. Furthermore, this numbering will correspond to our two types of  examples in the sequel. The exit time of an unbounded set appears naturally in applications. Think for instance to population dynamics where the exit time corresponds to the extinction of the population: in this case, $\bar{D}=[0,+\infty)$. 

%
%
%
\begin{defi} A probability $\nu\in{\cal P}(\MM)$ is called a Quasi-Stationary Distribution (QSD) for $(K_{\mu,\partial})_{\mu\in{\cal P}(\MM)}$ if $Y_0\sim\nu$ implies that
\begin{equation}\label{def:nbyn}
\forall n\in\mathbb{N},\quad \PE(Y_n\in .,\tau>n)=\nu(.)\PE(\tau>n).
\end{equation}
\end{defi}
By an induction, one easily checks that in this case, $\tau$ has an exponential distribution with parameter $\lambda=\nu K_{\nu,\partial} (\MM)$.  In particular, $\PE_\nu(\tau>n)$ is positive for every $n$ and the definition then implies that
$$\forall n\in\mathbb{N},\quad {\cal L}(Y_n|\tau>n)=\nu.$$
\begin{Rq}
We thus retrieve the classical properties of the Markov setting. Nevertheless, it is important to keep in mind that this property also involves that under the ``quasi-stationary regime'',
the dynamics of the killed process is markovian (since at each time $n$, the interaction distribution is also equal to $\nu$). This is similar to the (classical) stationary regime of dynamics with interaction where one retrieves the Markov property when the process starts from its invariant distribution.
\end{Rq}

Before going further, let us introduce two first assumptions:
\begin{hzero}\hypertarget{hzero}{}
\item{}\label{condhzero} There exists $\ell \ge 1$ such $\underset{x\in\MM, \mu\in {\cal P}(\MM)}{\sup} K_{\mu,\partial}^\ell {\bf 1} (x)<1$.
\end{hzero}
where, here and in the sequel, we write ${\bf 1}$ instead of ${\bf 1}_{\MM}$ (so that in particular, $K_{\mu,\partial}^\ell {\bf 1} (x)=K_{\mu,\partial}^\ell (x,\MM)$).
Before going further, let us remark that (see \cref{sec:proofsappendix}),
\begin{lem}\label{lemao} If $\MM$ is compact and if for every $k\ge 1$, $(x,\mu)\mapsto K_{\mu,\partial}^k {\bf 1} (x)$ is continuous on $\MM\times{\cal P}(\MM)$, then \ref{condhzero} holds true if for every $(x,\mu)\in \MM\times{\cal P}(\MM)$, there exists $\ell\ge 1$ such that $K_{\mu,\partial}^\ell {\bf 1}(x)<1$.
\end{lem}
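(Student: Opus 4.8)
The plan is to combine the monotonicity of $k\mapsto K_{\mu,\partial}^k\mathbf{1}(x)$ with a finite-subcover argument on the compact space $\MM\times{\cal P}(\MM)$. First I would record the monotonicity: since $K_{\mu,\partial}\mathbf{1}\le\mathbf{1}$ and $K_{\mu,\partial}$ is a positive operator (it integrates against a nonnegative measure, hence $f\le g\Rightarrow K_{\mu,\partial}f\le K_{\mu,\partial}g$), one gets $K_{\mu,\partial}^{k+1}\mathbf{1}=K_{\mu,\partial}^k\bigl(K_{\mu,\partial}\mathbf{1}\bigr)\le K_{\mu,\partial}^k\mathbf{1}$ for every $k\ge1$; thus for each $(x,\mu)$ the sequence $\bigl(K_{\mu,\partial}^k\mathbf{1}(x)\bigr)_{k\ge1}$ is non-increasing. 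In particular, once the pointwise condition ``$K_{\mu,\partial}^\ell\mathbf{1}(x)<1$ for some $\ell$'' holds, it holds for all larger exponents as well, and it is equivalent to $\lim_k K_{\mu,\partial}^k\mathbf{1}(x)<1$.

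Next I would use that, $\MM$ being compact Polish, the space ${\cal P}(\MM)$ is compact for the weak topology, so that $\mathcal{K}:=\MM\times{\cal P}(\MM)$ is compact. Write $g_k(x,\mu):=K_{\mu,\partial}^k\mathbf{1}(x)$, which is continuous on $\mathcal{K}$ by hypothesis. For each $p=(x,\mu)\in\mathcal{K}$, pick $\ell_p\ge1$ with $g_{\ell_p}(p)<1$ and set $\eta_p:=\tfrac12\bigl(1-g_{\ell_p}(p)\bigr)>0$; by continuity of $g_{\ell_p}$ there is an open neighborhood $U_p\ni p$ on which $g_{\ell_p}<1-\eta_p$. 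The family $(U_p)_{p\in\mathcal{K}}$ covers $\mathcal{K}$, hence finitely many of them, say $U_{p_1},\dots,U_{p_m}$, already cover $\mathcal{K}$; set $\ell:=\max_i\ell_{p_i}$ and $\eta:=\min_i\eta_{p_i}>0$. Given any $(x,\mu)\in\mathcal{K}$, it lies in some $U_{p_i}$, and then monotonicity in $k$ together with $\ell\ge\ell_{p_i}$ yields $g_\ell(x,\mu)\le g_{\ell_{p_i}}(x,\mu)<1-\eta_{p_i}\le1-\eta$. Taking the supremum over $(x,\mu)$ gives $\sup_{\mathcal{K}}g_\ell\le1-\eta<1$, which is precisely \ref{condhzero}.

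I do not expect a genuine obstacle here; the only point to be careful about is that the pointwise limit $\lim_k g_k$ need not be continuous, so Dini's theorem is not directly available. The covering argument above sidesteps this, since it only invokes continuity of each individual $g_{\ell_p}$, and the monotonicity step is exactly what lets the finitely many local thresholds $\ell_{p_i}$ collapse into a single global $\ell$. That monotonicity, in turn, is the only place where the sub-Markovian nature of $K_{\mu,\partial}$ enters.
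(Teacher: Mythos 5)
Your proof is correct and rests on exactly the same ingredients as the paper's: the monotonicity of $k\mapsto K_{\mu,\partial}^k\mathbf{1}(x)$, the continuity of each map $(x,\mu)\mapsto K_{\mu,\partial}^k\mathbf{1}(x)$, and the compactness of $\MM\times{\cal P}(\MM)$. The only difference is cosmetic: you run a direct open-cover/finite-subcover argument where the paper argues by contradiction through a sequence $(x_n,\mu_n)$ with $K_{\mu_n,\partial}^n\mathbf{1}(x_n)=1$ and a limit point, so the two proofs are essentially the same.
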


\begin{hun}\hypertarget{hun}{}
\item{}\label{condhun} For every $\mu\in{\cal P}(\MM)$, $K_{\mu,\partial}$ is Feller and for all $x\in\MM$, $\mu\mapsto \mu K_{\mu,\partial}$ is continuous for the topology of weak convergence on ${\cal P}(\MM)$.
\end{hun}

These assumptions will be useful for a first characterization of the QSD based on the \textit{dynamics with renewal}: let $(X_n^\mu)$ denote the Markov chain with kernel 
\begin{equation}\label{eq:kmudef}
\kmu(x,.)=K_{\mu,\partial}(x,.)+\delta_\mu(x)\mu(.).
\end{equation}
This means that when the particle is killed, it is regenerated through the distribution $\mu$.
Set
\begin{equation}\label{eq:Amu}
\Aa_\mu(x,.)=\sum_{n\ge0} K_{\mu,\partial}^n(x,.),
\end{equation}
Under \ref{condhzero} and \ref{condhun}, it can be shown (see \Cref{lem:basics}) that $A_\mu(x,.)$ is well-defined for every $(x,\mu)\in\MM\times{\cal P}(\MM)$ and  that
$$\Pi_\mu=\frac{\mu \Aa_{\mu}}{\mu \Aa_{\mu}{\bf 1}}$$
is the unique invariant distribution for $(X_n^\mu)_{n\ge1}$. We introduce a Lyapunov-type assumption which will be used for existence of QSD in the non-compact case (and tightness in the sequel).\\

 
 \begin{hvdeux}\hypertarget{hvdeux}{}
\item{}\label{condhvdeux} For every $\varepsilon>0$, there exists $\beta_\varepsilon\in\ER$  and an inf-compact\footnote{A function $V:\MM\rightarrow\ER$ is inf-compact if for all $r>0$,  $K_r=\{x\in \MM, V(x)\le r\}$ is compact.} function $V_\varepsilon:\MM\rightarrow\ER_+$ such that
$$\forall \,\mu\in{\cal P}(\MM), \quad K_{\mu,\partial} V_\varepsilon\le \varepsilon V_\varepsilon+\beta_\varepsilon.$$
\end{hvdeux}
\begin{Rq}
We refer to \Cref{lem:hvdeux} for concrete conditions on dynamics to ensure \ref{condhvdeux}.
\end{Rq}
\noindent We have the following property:
\begin{lem}\label{lem:characQSD}
Assume \ref{condhzero} and \ref{condhun}.\\

\noindent (i) $\mu^\star$ is a QSD for $(K_{\mu,\partial})_{\mu\in{\cal P}(\MM)}$ if and only if 
$$\mu^\star=\Pi_{\mu^\star}.$$
(ii) Furthermore, if $\MM$ is compact or, if \ref{condhvdeux} holds and $\inf_{\mu \in{\cal P}(\MM)} \mu K_{\mu,\partial}{\bf 1}>0$, then   existence holds for the QSD related to $(K_{\mu,\partial})_{\mu\in{\cal P}(\MM)}$.

\end{lem}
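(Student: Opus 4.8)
\emph{Part $(i)$.} The idea is that in the quasi-stationary regime the interaction is frozen, so the problem reduces to the classical equivalence between QSDs of a submarkovian kernel and invariant measures of its renewal kernel. Run $(Y_n)$ from $Y_0\sim\mu^\star$ and set $\rho_n(\cdot):=\PE(Y_n\in\cdot,\tau>n)$; by construction $\rho_0=\mu^\star$ (since $\tau\ge1$) and $\rho_{n+1}=\rho_n K_{\nu_n,\partial}$ with $\nu_n=\rho_n/\rho_n(\MM)$ whenever $\rho_n(\MM)>0$. If $\mu^\star=\Pi_{\mu^\star}$, then $\mu^\star$ is the invariant law of $\mathbb{K}_{\mu^\star}$, i.e. $\mu^\star K_{\mu^\star,\partial}+\mu^\star(\delta_{\mu^\star})\mu^\star=\mu^\star$; taking total masses gives $\mu^\star K_{\mu^\star,\partial}=\lambda\mu^\star$ with $\lambda:=\mu^\star K_{\mu^\star,\partial}{\bf 1}$ (and $\lambda<1$ after iterating and using \ref{condhzero}). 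An induction then yields $\rho_n=\lambda^n\mu^\star$ for all $n$ — when $\lambda>0$ the step uses $\nu_n=\mu^\star$, when $\lambda=0$ one simply has $\rho_n=0$ for $n\ge1$ — so $\PE(Y_n\in\cdot,\tau>n)=\mu^\star(\cdot)\PE(\tau>n)$ and $\mu^\star$ is a QSD. Conversely, if $\mu^\star$ is a QSD, applying the defining identity at $n=1$ (with $\nu_0={\cal L}(Y_0)=\mu^\star$) gives $\mu^\star K_{\mu^\star,\partial}=\rho_1=\mu^\star\PE(\tau>1)=:\lambda\mu^\star$, whence $\mu^\star\mathbb{K}_{\mu^\star}=\lambda\mu^\star+(1-\lambda)\mu^\star=\mu^\star$; since $\mathbb{K}_{\mu^\star}$ has a unique invariant probability (\Cref{lem:basics}), $\mu^\star=\Pi_{\mu^\star}$.

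\emph{Part $(ii)$.} By $(i)$ it is enough to produce a fixed point of $\Phi:\mu\mapsto\Pi_\mu$ on ${\cal P}(\MM)$, and the natural tool is the Schauder--Tychonoff fixed point theorem on ${\cal P}(\MM)$ viewed inside the locally convex space ${\cal M}_s(\MM)$ of finite signed measures with the weak topology. The crux is the continuity of $\Phi$. Writing $\Pi_\mu=(\mu\Aa_\mu)/(\mu\Aa_\mu{\bf 1})$ with $\mu\Aa_\mu=\sum_{n\ge0}\mu K_{\mu,\partial}^n$, I would first extract from \ref{condhzero} the uniform estimate $\sup_{x,\mu}K_{\mu,\partial}^{k\ell}{\bf 1}(x)\le r^k$ for some $r<1$ (with $\ell$ as in \ref{condhzero}), which gives $1\le\mu\Aa_\mu{\bf 1}\le\ell/(1-r)=:C$ for every $\mu$ — so the denominator is continuous in $\mu$ and bounded away from $0$ — and, more importantly, shows that $\mu\mapsto\mu\Aa_\mu$ is a uniform (in total variation) limit of its partial sums $\mu\mapsto\sum_{n<N}\mu K_{\mu,\partial}^n$. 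Continuity of $\Phi$ thus reduces to continuity of $\mu\mapsto\mu K_{\mu,\partial}^n$ for each fixed $n$, which I would establish by induction on $n$ from \ref{condhun}: the case $n=1$ is \ref{condhun}, and the induction step requires passing to the limit in $\mu$ simultaneously inside the measure $\mu K_{\mu,\partial}^{n-1}$ and inside the kernel $K_{\mu,\partial}$, combining the weak continuity of $\mu\mapsto\mu K_{\mu,\partial}$ with the Feller property (on a compact $\MM$ this is where one uses the uniform convergence of $x\mapsto K_{\mu,\partial}^{n-1}f(x)$). I expect this induction step to be the main technical obstacle. Granting continuity of $\Phi$, the compact case is immediate: ${\cal P}(\MM)$ is then nonempty, convex, compact and metrizable, $\Phi$ maps it into itself continuously, and Schauder--Tychonoff furnishes a fixed point, i.e. a QSD.

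\emph{The non-compact case} needs, in addition, a nonempty convex compact $\Phi$-invariant set. Put $p_0:=\inf_\mu\mu K_{\mu,\partial}{\bf 1}>0$, pick $\varepsilon>0$ with $(1+p_0)(1-\varepsilon)>1$, and let $V:=V_\varepsilon$, $\beta:=\beta_\varepsilon$ be as in \ref{condhvdeux}. Since $V$ is inf-compact (hence lower semicontinuous), the set $\mathcal{K}_R:=\{\mu\in{\cal P}(\MM):\mu(V)\le R\}$ is convex, closed and uniformly tight, hence compact by Prokhorov, and nonempty for $R$ large. From $\Aa_\mu=I+\Aa_\mu K_{\mu,\partial}$ one gets $\Pi_\mu=\tfrac1{c_\mu}\mu+\Pi_\mu K_{\mu,\partial}$ with $c_\mu:=\mu\Aa_\mu{\bf 1}\ge1+p_0$; combining this with $K_{\mu,\partial}V\le\varepsilon V+\beta$ (and first checking $\Pi_\mu(V)<\infty$ when $\mu(V)<\infty$ via the series expansion of $\mu\Aa_\mu(V)$) yields $\Pi_\mu(V)\le\tfrac1{1-\varepsilon}\big(\beta+\tfrac{\mu(V)}{c_\mu}\big)\le\tfrac1{1-\varepsilon}\big(\beta+\tfrac{\mu(V)}{1+p_0}\big)$, so choosing $R$ large enough that $\tfrac1{1-\varepsilon}\big(\beta+\tfrac{R}{1+p_0}\big)\le R$ makes $\mathcal{K}_R$ stable under $\Phi$. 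Schauder--Tychonoff applied to $\Phi|_{\mathcal{K}_R}$ then produces a QSD. Apart from the continuity of $\Phi$ discussed above, the only point demanding care here is this Lyapunov bookkeeping.
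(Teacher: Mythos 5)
Your part (i) is correct and follows essentially the paper's own route: the one-step identity $\mu^\star K_{\mu^\star,\partial}=(\mu^\star K_{\mu^\star,\partial}{\bf 1})\,\mu^\star$ is shown to be equivalent both to quasi-stationarity (by the same induction, with the same treatment of the degenerate case $\mu^\star K_{\mu^\star,\partial}{\bf 1}=0$) and to $\mu^\star=\Pi_{\mu^\star}$ via invariance for $\kmu$ and the uniqueness in \Cref{lem:basics}.

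For part (ii), however, there is a genuine gap at exactly the point you flag and then defer: the continuity of $\Phi:\mu\mapsto\Pi_\mu$ for the weak topology. Your proposed induction on $\mu\mapsto\mu K_{\mu,\partial}^n$ does not go through under \ref{condhzero} and \ref{condhun} alone. For $\mu_m\to\mu$ weakly and $f\in C_b(\MM)$, write
\begin{equation*}
\mu_m K_{\mu_m,\partial}^{2}f-\mu K_{\mu,\partial}^{2}f
=\mu_m K_{\mu_m,\partial}\bigl(K_{\mu_m,\partial}f-K_{\mu,\partial}f\bigr)
+\bigl(\mu_m K_{\mu_m,\partial}-\mu K_{\mu,\partial}\bigr)\bigl(K_{\mu,\partial}f\bigr).
\end{equation*}
The second term vanishes by the Feller property and \ref{condhun}, but the first needs $K_{\mu_m,\partial}f\to K_{\mu,\partial}f$ uniformly in $x$ (or uniformly on sets carrying most of the mass of $\mu_m K_{\mu_m,\partial}$). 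Assumption \ref{condhun} only provides continuity of the diagonal map $\mu\mapsto\mu K_{\mu,\partial}$; it gives neither joint continuity of $(\rho,\mu)\mapsto\rho K_{\mu,\partial}$ nor any pointwise or uniform-in-$x$ continuity of $\mu\mapsto K_{\mu,\partial}f(x)$ (you cannot take $\rho=\delta_x$ in the diagonal map without changing the kernel parameter as well), and this is so even when $\MM$ is compact. In the paper, continuity of $\mu\mapsto\Pi_\mu$ is only obtained (in total variation) in \Cref{lem:pimulip} under the additional assumption \ref{condhdeux}, which is not available in the present lemma. So the continuity of $\Phi$ is not a technicality to be granted; as written it is unproved and does not follow from the stated hypotheses.

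The paper avoids the problem by a different choice of map: after discarding the trivial case where some $\mu$ satisfies $\mu K_{\mu,\partial}{\bf 1}=0$ (such a $\mu$ is already a QSD), it applies the fixed point theorem to the one-step map $\Upsilon(\mu)=\mu K_{\mu,\partial}/(\mu K_{\mu,\partial}{\bf 1})$, whose fixed points are exactly the QSDs by the identity \eqref{eq:fixedpoint2} from part (i), and whose continuity is immediate from \ref{condhun}. The Lyapunov stability is then one line, $\Upsilon(\mu)(V_\varepsilon)\le\bigl(\varepsilon\mu(V_\varepsilon)+\beta_\varepsilon\bigr)/\alpha$ with $\alpha=\inf_\mu\mu K_{\mu,\partial}{\bf 1}$ and $\varepsilon=\alpha/2$. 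Your own Lyapunov bookkeeping for $\Pi_\mu$, based on $\Aa_\mu={\rm Id}+\Aa_\mu K_{\mu,\partial}$ and $c_\mu\ge 1+p_0$, is correct as far as it goes, but it is attached to the map whose continuity you cannot establish; replacing $\Phi$ by $\Upsilon$ (keeping your compactness argument for $\{\mu:\mu(V_\varepsilon)\le R\}$) repairs the proof.
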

The fact that $\mu^\star$ can be viewed as a fixed point of $\mu\mapsto \Pi_\mu$ will correspond to the natural target of our algorithm. We mean that our algorithm is naturally connected to the map $\mu\mapsto \Pi_\mu$ and this is why, we chose to recall this characterization in the above lemma. Note that $\inf_{\mu \in{\cal P}(\MM)} \mu K_{\mu,\partial}{\bf 1}>0$ is equivalent to $\sup_{\mu} \mu\delta_\mu<1$, see \Cref{rq:weights} for comments on this assumption (also remark that if there exists $x\in\MM$ and $\mu$  such that $K_{\mu,\partial}{\bf 1}(x)=0$, then existence also holds, $\delta_x$ being a trivial QSD in this case).\\

\noindent Finally, let us mention that we will not consider uniqueness problem in this paper, which seems to be a hard question in this non-linear setting. Note that in \Cref{sec:simu},
we consider a very simple example where uniqueness is (already) not true.\\

\noindent \textbf{Self-Attractive dynamics}. To approximate the QSDs of $(K_{\mu,\partial})_{\mu \in {\cal P}(\MM)}$, one introduces the self-attractive process $(X_n,\mu_n)_{n\ge0}$ recursively defined on  $\MM \times {\cal P}(\MM)$ by: 
$$X_0=x\in{\MM},\quad  \mu_0=\delta_x$$ and for every $n\ge0$,
\begin{equation}\label{dynamicsxn}
\PE(X_{n+1}\in .|{\cal F}_n)= \mathbb{K}_{\mu_n}(X_n,.)
\end{equation}
where ${\cal F}_n=\sigma(X_0,\ldots,X_n)$ and
$$\mu_n=\frac{1}{H_n}\sum_{k=1}^n \eta_k  \delta_{X_{k-1}}\quad\textnormal{with}\quad H_n=\sum_{k=1}^n\eta_k. $$
is a related weighted occupation measure: the $\eta_k$ are supposed to be positive numbers.  Throughout the paper, we also assume that $(\gamma_n)_{n\ge1}$ defined by $\gamma_n=:=\frac{\eta_n}{H_n}$ is \textit{nonincreasing} and that 
\begin{equation}\label{hyp:pas}
H_n\xrightarrow{n\rightarrow+\infty}+\infty \quad\textnormal{and}\quad  \gamma_n\overset{n\rightarrow+\infty}{=} o\left(\frac{1}{\log n}\right).
\end{equation}
Note that the most natural choice is $\eta_n=1$ which leads to $\gamma_n=\frac{1}{n}$ (see \Cref{rq:weights} for additional remarks on, the weights). The sequence $(\mu_n)_{n\ge1}$ satisfies the following recursive formula:
\begin{equation}\label{eq:munrecursive}
\mu_{n+1}=\mu_n (1-\gamma_{n+1})+\gamma_{n+1}\delta_{X_{n+1}}.
\end{equation}
The aim is to prove that the limit points of $(\mu_n)_{n\ge0}$ are QSDs of $(K_{\mu,\partial})_{\mu\in{\cal P}(\MM)}$.  Following \cite{BCP}, we artificially write as follows:
$$\mu_{n+1}=\mu_n+\gamma_{n+1} F(\mu_n)+\gamma_{n+1}\varepsilon_{n+1},$$
with 
$$ F(\mu)=-\mu+\Pi_\mu\quad \textnormal{and}\quad \varepsilon_{n+1}=\delta_{X_{n+1}}-\Pi_{\mu_n}.$$
With this form, a QSD is a zero of $F$ and the bet is to show that $(\mu_n)$ behaves asymptotically as a discretization of the ODE (this means that $\varepsilon_n$ is asymptotically a noise) and that the ODE 
$\dot{\mu}=F(\mu)$ 
has sufficiently nice properties which ensure convergence properties towards its zeros. We will see that the non-linearity will induce many difficulties for this point.\\

\noindent To state our main result, we will need to introduce an interpolated continuous version of $(\mu_n)$. Let $(t_k)_{k\ge0}$ be the increasing sequence of positive numbers defined by: $t_0=0$ and for all $k\ge 1$,
$t_k=\sum_{m=1}^k \gamma_m$.  Extend the above sequence $(\mu_n)_{n\ge1}$ to $\mathbb{N}\cup\{0\}$ by assuming that $\mu_0$ is a given probability on $\MM$. Then, we define $(\mutilde_t)_{t\ge 0}$ by: $\tilde{\mu}_0=\mu_0$ and  for all $k\ge 0$,
 \begin{equation}\label{eq:hnpluss}
    \forall s \in [0,\gamma_{k+1}], \qquad {\mutilde}_{t_{k}+s} = \mu_k + \frac{s}{\gamma_{k+1}}(\mu_{k+1}-\mu_k),
  \end{equation}
$(\mutilde_t)_{{t\ge0}}$ is thus almost surely an element of  the space  ${\cal C}(\ER_+,{\cal P}(\MM))$ of continuous processes on $\ER_+$ with values in ${\cal P}(\MM)$ (for the weak topology) and we consider the related shifted sequence $(\mutilde^{(n)})_{n\ge1}$ defined by:
\begin{equation}\label{processmutilde}
\forall t\ge0, \quad \mutilde^{(n)}_t=\mutilde_{t_n+t}.
\end{equation}
We now introduce  additional assumptions:

\begin{hdeux}\hypertarget{hdeux}{}
\item{}\label{condhdeux} $\mu\mapsto K_{\mu,\partial}(x,.)$ is Lipschitz for the TV-norm, uniformly in $x\in\MM$: there exists $C>0$ such that 
$$\sup_{x\in\MM} \|K_{\mu,\partial}(x,.)-K_{\nu,\partial}(x,.)\|_{TV}\le C \|\mu-\nu\|_{TV}.$$
\end{hdeux}
\begin{htrois}\hypertarget{htrois}{}
\item{}\label{condhtrois} There exist $\ell \in \mathbb{N}$, $\ell \ge1$, $\varepsilon>0$ such that  there exists $\Psi\in{\cal P}({\MM})$ such that for every $\mu\in{\cal P}(\MM)$,
$$ K_{\mu,\partial}^\ell(.,dy)\ge \varepsilon \Psi(dy).$$
\end{htrois}

\noindent The last assumption is related to the same probability $\Psi$ than in \ref{condhtrois}.\\
\begin{hquatre}\hypertarget{hquatre}{}
\item{}\label{condhquatre} There exists $c>0$ such that for every $k\in\mathbb{N}$, for every $\mu_1,\ldots, \mu_k\in{\cal P}(\MM)$, for every $n_1,\ldots, n_k\in\mathbb{N}$,
$$\frac{\Psi(K_{\mu_1,\partial}\circ\ldots\circ K_{\mu_k,\partial} {\bf 1})}{K_{\mu_1,\partial}\circ\ldots\circ K_{\mu_k,\partial} {\bf 1}}\ge c.$$
\end{hquatre}
This assumption  is discussed  in \Cref{rem:hquatre} below and in \Cref{lem:lowerupper}.


\begin{theo}\label{theo:discret} Assume \ref{condhzero}, \ref{condhun}, \ref{condhdeux} and \ref{condhtrois}. Assume  that  $(\mu_n)_{n\ge1}$ is $a.s.$ tight on ${\cal P}(\MM)$ and that $(\gamma_n)$ satisfies \eqref{hyp:pas}. Then,
\begin{itemize}
\item[(i)] The sequence $(\mutilde^{(n)})_{n\ge1}$ is almost surely tight on ${\cal C}(\ER_+,{\cal P}(\MM))$ and every weak limit of $(\mutilde^{(n)})_{n\ge1}$ is a solution of the ODE
\begin{equation}
\label{eq:ODEE}
\frac{d\nu_t}{dt}=-\nu_t+\Pi_{\nu_t}.
\end{equation}
\item[(ii)] If furthermore  \ref{condhquatre} holds, then every weak limit of $(\mutilde^{(n)})_{n\ge1}$ is stationary which implies that every weak limit of $(\mu_n)_{n\ge1}$ is a QSD $\mu^\star$ related to $(K_{\mu,\partial})_{\mu\in{\cal P}(\MM)}$.
\end{itemize}
\end{theo}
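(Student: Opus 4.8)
The plan is to run the ODE (stochastic-approximation) method on ${\cal P}(\MM)$, in the spirit of \cite{BCP}, treating the two items separately. For the tightness claim of part (i), fix a bounded metric $d$ metrizing the weak topology on ${\cal P}(\MM)$. From \eqref{eq:munrecursive}, $d(\mu_{k+1},\mu_k)\le C\gamma_{k+1}$ for a universal $C$ (both $\mu_k$ and $\delta_{X_{k+1}}$ being probabilities), so each interpolated path $t\mapsto\mutilde^{(n)}_t$ is $C$-Lipschitz with $C$ independent of $n$. Since $(\mu_n)$ is a.s.\ tight, the closed convex hull $\KK$ of $\{\mu_k:k\ge0\}$ is a.s.\ tight, hence compact by Prokhorov, and all interpolated values lie in $\KK$; Arzelà--Ascoli on ${\cal C}([0,T],(\KK,d))$ together with a diagonal extraction over $T$ yields a.s.\ relative compactness in ${\cal C}(\ER_+,{\cal P}(\MM))$.

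For the identification of the limit (still part (i)), I would start from $\mu_{n+1}=\mu_n+\gamma_{n+1}F(\mu_n)+\gamma_{n+1}\varepsilon_{n+1}$ with $F(\mu)=-\mu+\Pi_\mu$ and $\varepsilon_{n+1}=\delta_{X_{n+1}}-\Pi_{\mu_n}$, noting that $\mu\mapsto\Pi_\mu=\mu\Aa_\mu/(\mu\Aa_\mu\1)$ is continuous under \ref{condhzero}, \ref{condhun}, \ref{condhdeux} (uniform summability of $\Aa_\mu$ from \ref{condhzero}, continuity of the terms from the others; see \Cref{lem:basics}), so $F$ is continuous. Then split
\[
\varepsilon_{n+1}=\bigl(\delta_{X_{n+1}}-\delta_{X_n}\mathbb{K}_{\mu_n}\bigr)+\bigl(\delta_{X_n}\mathbb{K}_{\mu_n}-\Pi_{\mu_n}\bigr)=:\Delta M_{n+1}+r_{n+1},
\]
with $\E[\Delta M_{n+1}\mid{\cal F}_n]=0$. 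Testing against a fixed countable convergence-determining family $\{f_i\}\subset C_b(\MM)$: the windowed sums $\sum_{k=n}^{m}\gamma_{k+1}(\Delta M_{k+1})(f_i)$ over $t_m\le t_n+T$ tend to $0$ a.s.\ as $n\to\infty$, uniformly in such $m$ and in $T$ over compacts, by Doob's inequality, $\|\Delta M_{k+1}\|_{\TV}\le2$, and a Borel--Cantelli argument over geometrically spaced blocks using precisely the rate $\gamma_n=o(1/\log n)$ from \eqref{hyp:pas}; for $r_{n+1}$ I would use the Poisson equation for $\mathbb{K}_\mu$, observing that $\mathbb{K}_\mu\ge K_{\mu,\partial}$ as kernels, so \ref{condhtrois} gives $\mathbb{K}_\mu^\ell\ge\varepsilon\Psi$ uniformly in $\mu$ and hence uniform geometric ergodicity towards $\Pi_\mu$; the solution $g_\mu$ of $(I-\mathbb{K}_\mu)g_\mu=f_i-\Pi_\mu(f_i)$ is then bounded uniformly in $\mu$ and, by \ref{condhdeux}, Lipschitz in $\mu$ for $\|\cdot\|_{\TV}$, so the telescoping-plus-martingale decomposition of $\sum_k\gamma_{k+1}(r_{k+1})(f_i)$ vanishes in the limit, uniformly over such windows, after freezing the kernel inside each block at cost $O(\gamma_n)$ (using $d(\mu_{k+1},\mu_k)\le C\gamma_{k+1}\to0$) and using once more $\gamma_n=o(1/\log n)$. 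Together with the negligibility of the Riemann-sum error (uniform continuity of $F$ on $\KK$ and $\gamma_n\to0$) this gives
\[
\sup_{t\in[0,T]} d\!\left(\mutilde^{(n)}_t,\ \mu_n+\int_0^t F\bigl(\mutilde^{(n)}_s\bigr)\,ds\right)\xrightarrow[n\to\infty]{}0\qquad\text{a.s.},
\]
and passing to the limit along any convergent subsequence $\mutilde^{(n_j)}\to\nu$ (uniform convergence on $[0,T]$, continuity of $F$) yields $\nu_t=\nu_0+\int_0^tF(\nu_s)\,ds$, i.e.\ $\nu$ solves \eqref{eq:ODEE}. Since the estimates are pathwise, this covers every subsequential — hence every weak — limit.

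For part (ii), set $L_\infty=\bigcap_{n\ge1}\overline{\{\mu_k:k\ge n\}}$, which by tightness is a.s.\ a nonempty compact set equal to the set of limit points of $(\mu_n)$. By a diagonal extraction over backward shifts $\mutilde^{(n-m)}$ with $t_n-t_{n-m}\to\infty$, combined with the estimate above, every weak limit $\nu$ of $(\mutilde^{(n)})$ extends to a solution of \eqref{eq:ODEE} defined on all of $\ER$ with range in $L_\infty$, and through each point of $L_\infty$ passes such an entire solution (the standard ``invariance of the limit set'' / asymptotic-pseudotrajectory mechanism). I then invoke \Cref{prop:convuniformemustar}: under \ref{condhquatre}, solutions of \eqref{eq:ODEE} converge, uniformly in their initial datum, to the set $\mathcal F=\{\mu:\Pi_\mu=\mu\}$ of QSDs, say $\mathrm{dist}_{\TV}(\nu_t,\mathcal F)\le\rho(t)$ with $\rho(t)\to0$. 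Applied to an entire solution with range in $L_\infty$, starting the clock at $s<t$ and letting $s\to-\infty$ — using that $\mathcal F$ is closed (fixed points of the continuous map $\Pi$) — this forces $\nu_t\in\mathcal F$ for every $t$, whence $\dot\nu_t=-\nu_t+\Pi_{\nu_t}=0$ and $\nu$ is constant, i.e.\ stationary. In particular $L_\infty\subseteq\mathcal F$, so every weak limit of $(\mu_n)$, being a point of $L_\infty$, is a QSD $\mu^\star$ of $(K_{\mu,\partial})_{\mu\in{\cal P}(\MM)}$ by \Cref{lem:characQSD}(i).

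The genuinely hard analytic input is \Cref{prop:convuniformemustar} — uniform-in-initial-condition convergence of the \emph{nonlinear} ODE to the QSD set with no uniqueness assumption, where \ref{condhquatre} (the nonlinear analogue of the classical contraction criterion for uniqueness of QSDs) does its work — but that is a separate statement one may cite. Within the proof of the theorem itself, the two delicate points I expect to spend the most effort on are: in part (i), the noise-averaging estimate, precisely because the underlying chain changes its kernel $\mathbb{K}_{\mu_n}$ at every step, which is what forces one to combine the uniform ergodicity granted by \ref{condhtrois}, the $\TV$-Lipschitz continuity \ref{condhdeux}, and the sharp step condition $\gamma_n=o(1/\log n)$ for the almost sure control of windowed martingale sums; and in part (ii), the careful extraction turning weak limits of $(\mutilde^{(n)})$ into entire solutions trapped in $L_\infty$.
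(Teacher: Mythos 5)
Your architecture is essentially the paper's. Part (i) is proved there exactly along your lines: TV-Lipschitz interpolated paths plus an Ascoli/Prokhorov argument for tightness on ${\cal C}(\ER_+,{\cal P}(\MM))$, and a stochastic-approximation identification based on the Poisson equation for $\kmu=K_{\mu,\partial}+\delta_\mu(\cdot)\mu$, with the uniform minorization $\kmu^\ell\ge K_{\mu,\partial}^\ell\ge\varepsilon\Psi$ supplied by \ref{condhtrois} and the TV-Lipschitz continuity supplied by \ref{condhdeux} (see \Cref{prop:poisequa} and \Cref{prop:asymptoticpseudotrajectory}); your splitting of $\varepsilon_{n+1}$ into a raw martingale plus a bias treated by the Poisson equation is only a reorganization of the paper's martingale/remainder/drift decomposition. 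For (ii) you take a genuinely different, though closely related, route: you pass to entire solutions trapped in the limit set of $(\mu_n)$ and kill them with the uniform convergence of \Cref{prop:convuniformemustar}, whereas the paper shifts back only by a \emph{finite} time $\tau$, uses forward uniqueness (\Cref{lem:changevariable}) to identify the shifted limit with $\nu^{\infty}_{\cdot-\tau}$, and evaluates the uniform convergence at $t=\tau_\varepsilon$ to get $\nu^{\infty}_0={\cal I}(\nu^{\infty}_0)$ directly, whence stationarity and $\nu^\infty_0=\Pi_{\nu^\infty_0}$. Your variant needs one extra (easy but unstated) fact: \Cref{prop:convuniformemustar} gives convergence to a limit ${\cal I}_{\nu_0}$ depending on the initial datum, not convergence to the set $\{\mu:\Pi_\mu=\mu\}$; to argue as you do you must first check that ${\cal I}_{\nu_0}$ is an equilibrium (immediate, since $\nu_{t+1}-\nu_t=\int_t^{t+1}F(\nu_s)\,ds\to F({\cal I}_{\nu_0})$ in TV while the left side tends to $0$). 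The paper's finite-shift argument sidesteps both this point and the entire-solution extraction, at the price of nothing beyond forward uniqueness, which it has already established.

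The one step that does not survive as written is your martingale control in (i). If ``Doob's inequality'' means the $L^2$ maximal inequality, then Doob plus Borel--Cantelli over blocks is not enough under \eqref{hyp:pas} alone: take $\gamma_n\asymp(\log n)^{-2}$, which is admissible ($\gamma_n\log n\to0$, $\sum\gamma_n=+\infty$); windows of time-length $T$ start at indices $n_j$ with $t_{n_j}\approx jT$, the per-window second-moment bound is of order $\gamma_{n_j}T\asymp T(\log j)^{-2}$, which is not summable in $j$ --- and ``geometrically spaced blocks'' tacitly presumes $\gamma_n\asymp 1/n$. Since $|\Delta M_k(f)|$ is uniformly bounded, the standard fix is available and is exactly what the paper uses: an exponential-martingale (Hoeffding-type) maximal bound, the adaptation of \cite[Proposition 4.4]{B99} quoted in the proof of \Cref{prop:asymptoticpseudotrajectory}, which gives $\PE(\max\ge\varepsilon)\le C_1\exp(-C_2\varepsilon^2\gamma_n^{-1})$ per window; it is precisely this exponential decay that the condition $\gamma_n=o(1/\log n)$ is calibrated to make summable. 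With that repair, and granting \Cref{prop:convuniformemustar} as a cited input, your proof goes through.
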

\begin{Rq} \label{rem:hquatre} $\rhd$ In the non-compact case, the above result requires $a.s.$- tightness  of $(\mu_n)_{n\ge1}$. A criterion is given in \Cref{prop:tightnessbis} below.\\

\noindent $\rhd$ As quoted in \Cref{rem:poissonization}, a Poissonization argument could avoid to assume \ref{condhtrois} in the first part of \Cref{theo:discret}$(i)$ and to replace \ref{condhtrois} by ${\cal A}_\mu(.,dy)\ge \varepsilon \Psi(dy)$ in \Cref{theo:discret}$(ii)$. Nevertheless, as \ref{condhquatre} requires in practice a much more constraining assumption, we chose to skip this refinement (which would had added technicalities).\\

$\rhd$  For (linear) Markov dynamics, $i.e.$ when the kernel does not depend on $\mu$,  \ref{condhquatre} is  an assumption which is used to ensure uniqueness of the QSD. Unfortunately, this is not the case in this non linear setting (in the same spirit as uniform ellipticity may ensure uniqueness of invariant distributions for linear diffusions but not -in general- for McKean-Vlasov SDEs). Nevertheless, it provides uniformity in the convergence to equilibrium of the ODE \eqref{eq:ODEE}, which in turn helps  us to derive stationarity of the weak limits of $(\mutilde^{(n)})_{n\ge1}$ and  lead to the identification of the limit. 

\noindent The main example where \ref{condhquatre} is satisfied is the ``lower/upper-bounded'' case which is given in \Cref{lem:lowerupper} below.
\end{Rq}
\begin{lem}\label{lem:lowerupper} If  there exists a probability $\Psi$ on $\MM$ and some positive $c_1$ and $c_2$ such that, for every non-negative measurable function $f$ 
\begin{equation}
\forall (x,\mu)\in\MM\times {\cal P}(\MM),\quad c_1 \Psi(f)\le K_{\mu,\partial}f(x)\le c_2 \Psi(f),
\end{equation}
then \ref{condhquatre} holds true.
\end{lem}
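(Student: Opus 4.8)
The plan is to collapse the whole composition into a single application of one kernel to a fixed non-negative test function, and then use the two-sided bound exactly once. Fix $k\ge 1$ and $\mu_1,\dots,\mu_k\in{\cal P}(\MM)$ (the case $k=0$, where the ratio in \ref{condhquatre} equals $1$, will be dealt with at the end, and the possible presence of iterated powers $K_{\mu_i,\partial}^{n_i}$ changes nothing, since such an iterate is again a composition of kernels of the form $K_{\mu,\partial}$). Set $h:=K_{\mu_2,\partial}\circ\cdots\circ K_{\mu_k,\partial}{\bf 1}$, with the convention $h:={\bf 1}$ when $k=1$; this is a non-negative measurable function on $\MM$, and the denominator appearing in \ref{condhquatre} is $G:=K_{\mu_1,\partial}h$.

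Next I would apply the assumed inequality to the function $h$ and the parameter $\mu_1$. The upper bound gives $G(x)=K_{\mu_1,\partial}h(x)\le c_2\,\Psi(h)$ for every $x\in\MM$, while the lower bound gives $K_{\mu_1,\partial}h(y)\ge c_1\,\Psi(h)$ for every $y\in\MM$; integrating the latter against $\Psi$ yields $\Psi(G)\ge c_1\,\Psi(h)$. To turn this into a bound on the ratio I first check that $\Psi(h)>0$: applying the lower bound to $f={\bf 1}$ gives $K_{\mu,\partial}{\bf 1}\ge c_1\,\Psi({\bf 1})=c_1$ pointwise for every $\mu\in{\cal P}(\MM)$, and iterating this through the $k-1$ kernels defining $h$ gives $h\ge c_1^{\,k-1}>0$, hence $\Psi(h)\ge c_1^{\,k-1}>0$ (and $\Psi(h)=1$ when $k=1$). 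Dividing the two displays then gives $\Psi(G)/G(x)\ge c_1/c_2$ for every $x\in\MM$. Taking also $f={\bf 1}$ in the original inequality shows $c_1\le c_2$, so $c:=c_1/c_2\in(0,1]$ handles the trivial case $k=0$ as well, and \ref{condhquatre} holds with this $c$.

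Since there is no fixed point, no nonlinear flow and no limiting procedure involved, I do not expect any genuine obstacle here: the argument amounts to one invocation of the hypothesis per composition. The only point that deserves a line of care — and which the elementary estimate $h\ge c_1^{\,k-1}$ settles — is checking that the denominator $G$ does not vanish, so that the quotient in \ref{condhquatre} is meaningful.
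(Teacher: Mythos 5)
Your argument is correct and is essentially the paper's own proof: you apply the lower bound to the outermost kernel, integrate it against $\Psi$, and combine with the pointwise upper bound to get $\Psi(K_{\mu_1,\partial}\circ\cdots\circ K_{\mu_k,\partial}{\bf 1})\ge \frac{c_1}{c_2}K_{\mu_1,\partial}\circ\cdots\circ K_{\mu_k,\partial}{\bf 1}$. The extra checks you add (positivity of the denominator via $h\ge c_1^{k-1}$, the trivial cases and $c_1\le c_2$) are harmless refinements of the same one-line argument.
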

\begin{proof} Under this assumption, 
$$c_1\Psi(K_{\mu_2,\partial}\ldots\circ K_{\mu_k,\partial} {\bf 1})\le K_{\mu_1,\partial}\circ\ldots\circ K_{\mu_k,\partial} {\bf 1}\le c_2\Psi(K_{\mu_2,\partial}\circ\ldots\circ K_{\mu_k,\partial} {\bf 1}).$$
Hence, integrating with respect to $\Psi$,
$$\Psi(K_{\mu_1,\partial}\circ\ldots\circ K_{\mu_k,\partial} {\bf 1})\ge c_1\Psi(K_{\mu_2,\partial}\ldots\circ K_{\mu_k,\partial} {\bf 1})\ge \frac{c_1}{c_2}K_{\mu_1,\partial}\circ\ldots\circ K_{\mu_k,\partial} {\bf 1}.$$
\end{proof}
\begin{theo} \label{prop:tightnessbis} Assume \ref{condhvdeux} and $\sup_{\mu\in{\cal P}(\MM)} \|\delta_\mu\|_\infty<1$. Assume that $\sum \gamma_n^2<+\infty$. Then, there exists an inf-compact function ${\cal V}:\MM\rightarrow\ER_+$ such that
$$\sup_{n\ge1} \mu_n({\cal V})<+\infty\quad a.s.$$
In particular, $(\mu_n)$ is $a.s.$ tight.
\end{theo}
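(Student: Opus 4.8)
The plan is to construct the inf-compact function $\mathcal{V}$ from a suitable average of the Lyapunov functions $V_\varepsilon$ supplied by \ref{condhvdeux}, and then to run a stochastic-approximation / supermartingale argument on $\mu_n(\mathcal{V})$ along the recursion \eqref{eq:munrecursive}. First I would fix a small $\varepsilon\in(0,1)$ and write $V=V_\varepsilon$, $\beta=\beta_\varepsilon$, so that $K_{\mu,\partial}V\le \varepsilon V+\beta$ for all $\mu$. The key point is that $\mathbb{K}_{\mu_n}$ is the \emph{renewal} kernel \eqref{eq:kmudef}: $\mathbb{K}_\mu(x,\cdot)=K_{\mu,\partial}(x,\cdot)+\delta_\mu(x)\mu(\cdot)$. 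Hence for the self-attractive chain,
\[
\E[V(X_{n+1})\mid\mathcal{F}_n]=K_{\mu_n,\partial}V(X_n)+\delta_{\mu_n}(X_n)\,\mu_n(V)\le \varepsilon V(X_n)+\beta+\mu_n(V).
\]
The appearance of $\mu_n(V)$ on the right — because reborn particles are resampled from the occupation measure — is exactly the feedback term that one must control; this is the analogue of the estimate in \cite{BCP} but now uniform in the interaction parameter thanks to the uniformity built into \ref{condhvdeux}.

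Next I would turn this into a recursion for $a_n:=\mu_n(V)$. Applying $\mu_{n+1}=(1-\gamma_{n+1})\mu_n+\gamma_{n+1}\delta_{X_{n+1}}$ and taking conditional expectation,
\[
\E[a_{n+1}\mid\mathcal{F}_n]=(1-\gamma_{n+1})a_n+\gamma_{n+1}\E[V(X_{n+1})\mid\mathcal{F}_n]
\le (1-\gamma_{n+1})a_n+\gamma_{n+1}\big(\varepsilon V(X_n)+\beta+a_n\big).
\]
Here the troublesome term $\gamma_{n+1}V(X_n)$ is not $\mathcal{F}_n$-measurable through $\mu_n$ in a usable way, so I would instead keep $V(X_n)$ as an extra state variable and study the pair $(a_n, V(X_n))$, or — cleaner — observe that $\mu_{n+1}(V)$ itself already incorporates $\delta_{X_n}$-type mass only through past steps, and bound $\gamma_{n+1}\varepsilon V(X_n)$ using the uniform-in-$n$ smallness of $\gamma_n$ together with the fact that $V(X_n)$ enters $a_{n+1}$ only with weight $\gamma_{n+1}$. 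Concretely, summing the recursion telescopically and using $\sum\gamma_n^2<\infty$ to handle the martingale increments $\gamma_{n+1}(V(X_{n+1})-\E[V(X_{n+1})\mid\mathcal{F}_n])$ — whose conditional second moments need an a priori moment bound, obtained by first running the argument with $V^{1/2}$ or by a stopping-time localization — I would get a Robbins–Siegmund type inequality
\[
\E[a_{n+1}\mid\mathcal{F}_n]\le (1-(1-\varepsilon')\gamma_{n+1})a_n+C\gamma_{n+1}+(\text{summable nonnegative terms}),
\]
for some $\varepsilon'<1$, after absorbing the feedback $\gamma_{n+1}a_n$ and the $\gamma_{n+1}\varepsilon V(X_n)$ contribution into the drift (this is where one genuinely uses that $\varepsilon$ can be chosen small and that $\sup_\mu\|\delta_\mu\|_\infty<1$ bounds the resampling frequency away from the degenerate regime, preventing $a_n$ from being inflated at every step by a full resampling). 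The Robbins–Siegmund lemma then yields $a_n$ converges a.s.\ to a finite random variable, hence $\sup_n \mu_n(V_\varepsilon)<\infty$ a.s.

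Finally, $\mathcal{V}:=V_\varepsilon$ is inf-compact by \ref{condhvdeux}, and $\sup_n\mu_n(\mathcal{V})<\infty$ a.s.\ immediately gives a.s.\ tightness of $(\mu_n)$: on the event $\{\sup_n\mu_n(\mathcal{V})\le M\}$, Markov's inequality gives $\mu_n(\mathcal{V}>r)\le M/r$ uniformly in $n$, and $\{\mathcal{V}\le r\}$ is compact. I expect the \textbf{main obstacle} to be the moment a priori bound needed to control the martingale part: unlike the compact case one cannot assume $V$ bounded, so one must bootstrap — first establish $\sup_n\E[a_n]<\infty$ (taking expectations in the drift inequality, where the feedback term is benign in expectation), then upgrade to the a.s.\ statement via Robbins–Siegmund, being careful that the noise term $\varepsilon_{n+1}$ here is $\delta_{X_{n+1}}-\mathbb{K}_{\mu_n}(X_n,\cdot)$ tested against the unbounded $V$, so its conditional variance is controlled only after the $L^1$ bound is in hand. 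A secondary technical point is making rigorous the informal manipulation that treats $\gamma_{n+1}V(X_n)$ as lower-order; the clean fix is to carry $(a_n, V(X_n))$ jointly and note $V(X_n)$ contributes to $a_{n+1}$ exclusively through the $\gamma_{n+1}\delta_{X_n}$ that was added at the \emph{previous} step, so it is already counted in $a_n$ up to the drift inequality above.
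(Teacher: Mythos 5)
Your overall architecture matches the paper's: exploit the renewal form of $\mathbb{K}_\mu$ to get the coupled drift system for the pair $(\mu_n(V_\varepsilon),V_\varepsilon(X_n))$, first prove a uniform-in-$n$ bound in expectation (with $\sup_\mu\|\delta_\mu\|_\infty<1$ supplying the negative drift and $\varepsilon$ taken small), then upgrade to an almost sure bound using $\sum\gamma_n^2<+\infty$; your parenthetical idea of ``running the argument with $V^{1/2}$'' is precisely the paper's device (by Jensen, $K_{\mu,\partial}\sqrt{V_\varepsilon}\le\sqrt{\varepsilon}\sqrt{V_\varepsilon}+\sqrt{\beta_\varepsilon}$, and the theorem is proved for ${\cal V}=\sqrt{V_\varepsilon}$). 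But as written the plan has genuine gaps in the a.s.\ upgrade. First, you conclude with ${\cal V}=V_\varepsilon$ itself, yet the martingale increments tested against $V_\varepsilon$ have conditional second moments of order $\E[V_\varepsilon^2(X_{n+1})\mid{\cal F}_n]$, and nothing in \ref{condhvdeux} controls $V_\varepsilon^2$ (the family $(V_\varepsilon)$ is neither nested nor stable under squaring); the Step-1 $L^1$ bound on $V_\varepsilon$ controls the variance of the $\sqrt{V_\varepsilon}$-increments only, so the a.s.\ statement must be made for $\sqrt{V_\varepsilon}$ (still inf-compact, so the theorem is unharmed, but your final sentence is inconsistent with your own variance remark). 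Second, Robbins--Siegmund does not apply to your displayed inequality: the additive term $C\gamma_{n+1}$ is not summable, and a conditional-mean drift inequality alone cannot rule out a.s.\ unboundedness when the per-step input $V(X_{n+1})$ is unbounded. One really needs the pathwise decomposition $\mu_{n+1}({\cal V})\le(1-\rho\gamma_{n+1})\mu_n({\cal V})+C\gamma_{n+1}+\gamma_{n+1}\Delta M_{n+1}+\cdots$, exponential discounting of the sums, Kronecker's lemma, and $L^2$-bounded martingale convergence (this is where Step 1 and $\sum\gamma_n^2<\infty$ actually enter), which is how the paper concludes.

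The other substantive issue is the feedback term $\gamma_{n+1}\bar\varepsilon\,{\cal V}(X_n)$. Your ``clean fix'' --- that $V(X_n)$ is already counted in $a_n$ --- does not work: the best such a comparison gives is $\gamma_{n+1}V(X_n)\le\gamma_nV(X_n)\le\mu_n(V)$ (since $\mu_n\ge\gamma_n\delta_{X_n}$ and $(\gamma_n)$ is non-increasing), which produces an extra $\varepsilon\,\mu_n(V)$ term \emph{without} a step-size factor and destroys the contraction. The workable route is the one you also mention, carrying the pair as a state: in expectation the paper uses the Lyapunov function $L_n=\E[\mu_n(V_\varepsilon)]+\lambda\gamma_n\E[V_\varepsilon(X_n)]$ with $\lambda$ small; pathwise, it iterates the contraction $\E[{\cal V}(X_{n+1})\mid{\cal F}_n]\le\bar\varepsilon\,{\cal V}(X_n)+\bar\beta_\varepsilon+\mu_n({\cal V})$ down to time $0$, which introduces the auxiliary sequence $\nu_n({\cal V})=\sum_{k<n}\bar\varepsilon^{\,n-1-k}\mu_k({\cal V})$ and triangular arrays $\sum_{\ell\le n}\bar\varepsilon^{\,n-\ell}\Delta M_{\ell+1}$ of martingale increments; controlling these (by rewriting the array as a single $L^2$-bounded martingale minus a Kronecker-type remainder) is the technically substantive part of the proof and is absent from your plan.
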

\begin{Rq} \label{rq:weights} The assumption  on $\sum \gamma_n^2$ is satisfied in many cases. For instance, if $\eta_n= n^{\alpha}$ with $\alpha\in (-1,\infty)$ then 
$$ H_n\sim \frac{n^{1+\alpha}}{1+\alpha}\quad \textnormal{and}\quad \gamma_n\sim \frac{1}{(1+\alpha) n}.$$
One can check that it is still true if   $\eta_n= e^{n^\alpha}$ with $\alpha<1/2$. The condition on $\delta_\mu$ says that, uniformly in $\mu$ and $x\in\MM$, the probability to die in one step is upper-bounded. In the compact case, this is not constraining for our discrete-time dynamics (it may be for continuous-time \textit{hard-killing} problems). In the non-compact, it generates constraints when the drift is ``superlinear'' (see \ref{subsec:unboundedsuperlinear}). For instance, if $D=(0,+\infty)$ and $b(x)=-x^2$, the probability to die related to the Euler scheme goes to $1$ when $x\rightarrow+\infty$. This is due to a classical unstability of the Euler scheme when the drift is not sublinear, that we overcome here by introducing a truncation-type function $\tronc$ (see \eqref{eq:formegeneralenoyautronc}).
\end{Rq}

\subsection{Applications} \label{sec:examples}

 We apply the general theorem to the two models introduced in the previous section.
 
\subsubsection{Euler schemes of McKean-Vlasov on bounded subsets and Approximation of QSDs of continuous-time dynamics}
We recall that the first model is an Euler scheme of a McKean Vlasov diffusion in a bounded open set $D$ of $\ER^d$. We provide  a set of assumptions which guarantee the convergence\footnote{By convergence, we mean abusively that tightness combined with the fact that weak limits are targets.} towards QSDs of these discrete dynamics and also show that when the stepsize $h$ goes to $0$, we also approximate QSDs of the McKean-Vlasov dynamics with \textit{hard killing}. Let us introduce our assumptions:\\

\begin{hmv}\hypertarget{hmv}{}
\item{}\label{condhmv} $x\mapsto b(x,\mu)$ and $x\mapsto \sigma(x,\mu)$ are Lipschitz continuous on $\MM$, uniformly in $\mu$.   For all $x\in D$, $b(x,\cdot)$ and $\sigma(x,\cdot)$ admit flat derivatives which are {bounded: 
\begin{align*}
    \sup_{(x,\mu)\in\MM\times\mathcal{P}(\MM)}\bigg\|\frac{\delta b(x,\cdot)}{\delta m}(\mu,\cdot)\bigg\|_{\infty}\vee\bigg\|\frac{\delta \sigma(x,\cdot)}{\delta m}(\mu,\cdot)\bigg\|_{\infty}<+\infty.
\end{align*}}

Finally, $x\mapsto\sigma(x,\mu)$ is uniformly elliptic, uniformly in $\mu$, $i.e.$ $\exists \lambda_0>0$ such that for all $(x,\mu)\in\MM\times {\cal P}(\MM)$, $\sigma\sigma^\star(x,\mu)\ge \lambda_0 {\rm Id}$ (in the sense of symmetric matrices).
\end{hmv}

\noindent Under  \ref{condhmv} and especially under the ellipticity assumption, we can consider the problem on $\bar{D}$ instead of $D$, $i.e.$ we are interested in the QSDs of $(K_{\mu,\partial}^{(h)})_{\mu\in {\cal P}(\bar{D})}$, where
\begin{equation}\label{eq:kmupartialh}
K_{\mu,\partial}^{(h)}f(x)=\ES[f(\Upsilon_x^h){\bf 1}_{\Upsilon_x^h\in \bar{D}}],
\end{equation}
with $\Upsilon_x^h$ defined in \Cref{rq: model1}. In order to emphasize the dependence in the step $h$, we also write $(\mu_n^h)$ instead of $(\mu_n)$ in the next proposition.
\begin{prop} \label{prop:model1} Assume \ref{condhmv} and $D$ has ${\cal C}^2$-boundary. Assume that $(\gamma_n)$ satisfies \eqref{hyp:pas}. Let $h>0$ and assume that $\zeta_h$ has a positive {${\cal C}^1$} density $g_h$.
\begin{itemize}
\item[(i)] Let $h>0$.  Then, $(\mu_n^h)$ is tight on ${\cal P}(\bar{D})$ and every weak limit of $(\mu_n^h)$ is a QSD $\mu^\star_h$ for $(K_{\mu,\partial}^{(h)})_{\mu \in {\cal P}(\bar{D})}$. Furthermore, $\mu^\star_h(\partial D)=0$. 
\item[(ii)] {Furthermore, if $(\zeta_t)_{t\ge0}$ is a Brownian motion and if
 \begin{align*}
    \sup_{(x,\mu)\in\MM\times{\cal P}(\MM)}\bigg[\frac{\delta b(x,\cdot)}{\delta m}(\mu,\cdot)\bigg]_{1}\vee\bigg[\frac{\delta\sigma(x,\cdot)}{\delta m}(\mu,\cdot)\bigg]_{1}<+\infty,
\end{align*}} such a sequence  $(\mu^\star_h)_{h>0}$ is tight on $D$ and every weak limit (when $h\rightarrow0$) is a QSD for the McKean-Vlasov diffusion $(Y_t)_{t\ge0}$, defined in \Cref{rq: model1} and killed at time $\tau:=\inf\{t\ge0, Y_t\in D^c\}$. 
\end{itemize}
\end{prop}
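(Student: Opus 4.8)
\emph{Part (i): reduction to \Cref{theo:discret}.} The plan is to check that the Euler kernels $(K^{(h)}_{\mu,\partial})_{\mu\in{\cal P}(\bar D)}$ of \eqref{eq:kmupartialh} satisfy \ref{condhzero}, \ref{condhun}, \ref{condhdeux}, \ref{condhtrois} and \ref{condhquatre}, so that, $\bar D$ being compact and $(\gamma_n)$ satisfying \eqref{hyp:pas}, a.s.\ tightness of $(\mu_n^h)$ is automatic and \Cref{theo:discret} applies. Since $\sigma$ is uniformly elliptic (so $\sigma(x,\mu)$ is invertible) and $b,\sigma$ are continuous, hence bounded, on the compact $\bar D$, the kernel has the explicit density $p^h_\mu(x,y)=|\det\sigma(x,\mu)|^{-1}g_h\!\big(\sigma(x,\mu)^{-1}(y-x-hb(x,\mu))\big)$, and everything follows from the properties of $g_h$: First, $\bar D$ bounded and $g_h>0$ everywhere give $K^{(h)}_{\mu,\partial}{\bf 1}(x)=\PE(\Upsilon_x^h\in\bar D)<1$ for all $(x,\mu)$, so \Cref{lemao} (using continuity of $(x,\mu)\mapsto K^{(h)}_{\mu,\partial}{\bf 1}(x)$ and compactness) yields \ref{condhzero} with $\ell=1$. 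Continuity of $g_h,b,\sigma$ gives the Feller property, and bounded jointly continuous flat derivatives make $\mu\mapsto(b(\cdot,\mu),\sigma(\cdot,\mu))$ continuous uniformly in $x$, hence $\mu\mapsto\mu K^{(h)}_{\mu,\partial}$ continuous: this is \ref{condhun}. For \ref{condhdeux}, the ${\cal C}^1$-regularity of $g_h$, with $\sigma^{-1}$, $b$, $\sigma$ bounded, gives $\|K^{(h)}_{\mu,\partial}(x,\cdot)-K^{(h)}_{\nu,\partial}(x,\cdot)\|_{TV}\lesssim|b(x,\mu)-b(x,\nu)|+\|\sigma(x,\mu)-\sigma(x,\nu)\|$, which the bounded-flat-derivative part of \ref{condhmv} bounds by $C\|\mu-\nu\|_{TV}$. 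Finally, on the compact $\bar D$ one has $0<\varepsilon_0\le p^h_\mu(x,y)\le M_0<\infty$ uniformly in $x,y\in\bar D$ and $\mu$, so with $\Psi$ the normalized Lebesgue measure on $\bar D$ one gets $\varepsilon_0|\bar D|\,\Psi(f)\le K^{(h)}_{\mu,\partial}f(x)\le M_0|\bar D|\,\Psi(f)$ for all $f\ge0$; this is exactly the hypothesis of \Cref{lem:lowerupper}, giving \ref{condhquatre}, and it contains \ref{condhtrois}. Thus \Cref{theo:discret} gives tightness of $(\mu_n^h)$ and that every weak limit is a QSD $\mu^\star_h$. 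For $\mu^\star_h(\partial D)=0$, use \Cref{lem:characQSD}: $\mu^\star_h=\Pi_{\mu^\star_h}=\mu^\star_h\Aa_{\mu^\star_h}/(\mu^\star_h\Aa_{\mu^\star_h}{\bf 1})$; the kernels $K^{(h),k}_{\mu,\partial}(x,\cdot)$ for $k\ge1$ are absolutely continuous and $\partial D$ is Lebesgue-negligible (a compact ${\cal C}^2$ hypersurface), so $(\mu^\star_h\Aa_{\mu^\star_h})(\partial D)=\mu^\star_h(\partial D)$, while $\mu^\star_h\Aa_{\mu^\star_h}{\bf 1}>1$ since $K^{(h)}_{\mu,\partial}{\bf 1}>0$ on $\bar D$; hence $\mu^\star_h(\partial D)=0$.

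\emph{Part (ii): the limit $h\to0$.} Since $\bar D$ is compact, $(\mu^\star_h)_{h>0}$ is relatively compact in ${\cal P}(\bar D)$ and the killing rates $\lambda_h:=\mu^\star_h K^{(h)}_{\mu^\star_h,\partial}{\bf 1}\in(0,1]$ lie in a compact set; it therefore suffices to fix a sequence $h\downarrow0$ along which $\mu^\star_h\Rightarrow\mu^\star$ in ${\cal P}(\bar D)$ and $\lambda_h\to\lambda^\star\in[0,1]$, and to prove (a) $\mu^\star(\partial D)=0$ and (b) $\mu^\star$ is a QSD of the killed McKean-Vlasov diffusion. For (b), one first records a generator characterization: under \ref{condhmv} together with the extra Lipschitz bounds on the flat derivatives, the McKean-Vlasov system is well posed and a probability $\nu$ on $D$ with $\nu(\partial D)=0$ is a QSD of the killed diffusion if and only if it is a quasi-stationary distribution (in the Markovian sense) of the frozen uniformly elliptic diffusion with generator ${\cal L}_\nu=\frac12\mathrm{Tr}(\sigma\sigma^\star(\cdot,\nu)\nabla^2\,\cdot)+\langle b(\cdot,\nu),\nabla\,\cdot\rangle$ killed at $\partial D$, i.e.\ iff there is $\lambda\ge0$ with $\nu({\cal L}_\nu f)=-\lambda\,\nu(f)$ for all $f\in{\cal C}^\infty_c(D)$.

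\emph{Identification and boundary estimate.} For (b), I would test the eigen-relation $\lambda_h\mu^\star_h(f)=\mu^\star_h(K^{(h)}_{\mu^\star_h,\partial}f)$ against $f\in{\cal C}^\infty_c(D)$: for $h$ small, ${\bf 1}_{\Upsilon_x^h\in\bar D}=1$ on $\mathrm{supp}\,f$, and a uniform second-order Taylor expansion of the Brownian increment gives $h^{-1}(K^{(h)}_{\mu,\partial}f(x)-f(x))={\cal L}_\mu f(x)+o_h(1)$ uniformly in $x\in\mathrm{supp}\,f$ and $\mu$. Using $\mu^\star_h\Rightarrow\mu^\star$, the bound $|b(x,\mu^\star_h)-b(x,\mu^\star)|\le[\tfrac{\delta b}{\delta m}]_1\,W_1(\mu^\star_h,\mu^\star)$ (and likewise for $\sigma$) together with $W_1(\mu^\star_h,\mu^\star)\to0$ on the compact $\bar D$ --- this is where the Lipschitz flat-derivative hypothesis is used --- and a further subsequence along which $(\lambda_h-1)/h$ converges, one passes to the limit and gets $\mu^\star({\cal L}_{\mu^\star}f)=-\lambda^\star\mu^\star(f)$ for all $f\in{\cal C}^\infty_c(D)$. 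For (a) one needs $\sup_{h\le h_0}\mu^\star_h(\{x:\,d(x,\partial D)<\delta\})\le\omega(\delta)$ with $\omega(\delta)\to0$: I would test the eigen-relation against a barrier $\psi\in{\cal C}^2(\bar D)$ built from the signed distance $\rho$ to $\partial D$ (which is ${\cal C}^2$ in a tube around $\partial D$, as $\partial D$ is ${\cal C}^2$), whose leading second-order term near the boundary is $\langle\sigma\sigma^\star\nabla\rho,\nabla\rho\rangle\ge\lambda_0>0$ by ellipticity; one then matches this coercive term against the (same-order) killing-flux contribution $\ES[\psi(\Upsilon_x^h){\bf 1}_{\Upsilon_x^h\notin\bar D}]$ and against the sign $(\lambda_h-1)\mu^\star_h(\psi)\le0$, thereby controlling the $\mu^\star_h$-mass of a neighborhood of $\partial D$ by that of a strictly interior region plus $o_h(1)$; optimizing over the shell width yields the uniform estimate and, as a by-product, $1-\lambda_h=O(h)$. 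Combining (a) and (b) over all convergent subsequences gives that $(\mu^\star_h)$ is tight on $D$ and that every weak limit is a QSD of the killed McKean-Vlasov diffusion.

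\emph{Main obstacle.} The delicate point is step (a): unlike in the linear Markovian case, no fixed Lyapunov function yields the required non-concentration, because the killed Euler scheme admits no contractive drift inequality in the diffusive boundary layer $d(x,\partial D)\sim\sqrt h$; one must exploit the precise balance, near a ${\cal C}^2$ boundary, between the coercive second-order term coming from uniform ellipticity and the boundary killing flux, and do so uniformly in the frozen interaction measure. A secondary difficulty is keeping the measure-dependence of $b,\sigma$ regular enough to survive the double limit $n\to\infty$ then $h\to0$, which is exactly what the extra Lipschitz hypothesis on the flat derivatives ensures.
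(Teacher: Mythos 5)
Part (i) of your proposal is correct and follows essentially the same route as the paper: verify \ref{condhzero}--\ref{condhquatre} for $K^{(h)}_{\mu,\partial}$ via the explicit Gaussian-type density, get \ref{condhquatre} from the uniform lower/upper bound against the uniform measure on $D$ (\Cref{lem:lowerupper}), get \ref{condhdeux} from the bounded flat derivatives, apply \Cref{theo:discret}, and kill the boundary mass by absolute continuity of the kernel. That part is fine.

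Part (ii), however, contains a genuine gap at the identification step. You characterize a QSD of the killed (frozen) diffusion by the weak eigen-relation $\nu({\cal L}_\nu f)=-\lambda\,\nu(f)$ for all $f\in{\cal C}^\infty_c(D)$, and this equivalence is false: compactly supported test functions do not see the Dirichlet boundary condition. For instance, for Brownian motion killed at the exit of $(0,\pi)$, the probability with density proportional to $\cos\bigl(\tfrac{x}{2}-\tfrac{\pi}{4}\bigr)$ satisfies $\nu(\tfrac12 f'')=-\tfrac18\,\nu(f)$ for every $f\in{\cal C}^\infty_c((0,\pi))$ (integration by parts, no boundary terms), yet it is not a QSD; the unique QSD has density proportional to $\sin x$ with $\lambda=\tfrac12$. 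Your limit argument therefore only identifies $\mu^\star$ as an interior eigenmeasure of ${\cal L}_{\mu^\star}^*$, not as the QSD, and your boundary estimate (a) -- of the form $\mu^\star_h(B_\delta)\le\omega(\delta)$ with $\omega(\delta)\to0$ -- does not repair this, since it does not force the limiting density to vanish at $\partial D$. In addition, the two quantitative inputs you need, the uniform non-concentration near $\partial D$ and the two-sided bound on $(1-\lambda_h)/h$ (the lower bound is needed to exclude $\lambda^\star=0$, the upper bound to pass to the limit at all), are only sketched via a barrier/killing-flux matching that is not carried out uniformly in $h$ and in the frozen measure.

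The paper takes a different and safer route for (ii): it never works at the generator level. It iterates the one-step QSD relation into the finite-time identity $\mathbb{E}_{\mu^\star_h}\bigl[f(\xi^{h,\mu^\star_h}_{\underline{t}_h}){\bf 1}_{\{\bar\tau^h_D>t\}}\bigr]=e^{-\lambda_h\underline{t}_h}\mu^\star_h(f)$, proves tightness of $(\mu^\star_h)$ on $D$ by combining this identity with uniform Aronson-type density bounds for the Euler scheme (uniform in $h$ and in $\mu$), proves $\lambda_{\min}\le\lambda_h\le\lambda_{\max}$ by exit-time and density estimates, and then passes to the limit in the killed semigroup itself, controlling the change of interaction measure through a ${\cal W}_1$-stability lemma (this is where the Lipschitz flat derivatives enter, \Cref{lem:muomu1}) and the symmetric difference of exit events (\Cref{lem:xyexit}). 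Passing to the limit in the time-$t$ killed semigroup retains the hard-killing (Dirichlet) information that your compactly supported test functions discard; if you want to keep a generator-based identification, you would at least have to enlarge the class of test functions (or prove that the limiting density vanishes at $\partial D$), which is precisely the part your argument is missing.
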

This result shows in particular that the algorithm may generate a way to simulate QSDs of continuous-time McKean Vlasov SDEs. In \Cref{sec:simu}, we provide several numerical illustrations.
\subsubsection{Dynamics killed with exit times of unbounded subsets}\label{subsec:unboundedsuperlinear}
In this section, we aim at considering unbounded examples. A prototypical example is a discretization scheme of  the following one-dimensional McKean-Vlasov equation killed when leaving $D=]0,+\infty[$:
\begin{equation}\label{eq:superlineardd}
 dY_t=(-\varphi(Y_t)+ \int F(Y_t-y)\mu_t(dy)) dt+ d\zeta_t,
 \end{equation}
where $F$ is supposed to be a bounded function and 
$$\lim_{x\rightarrow+\infty}\frac{\varphi(x)}{x}=+\infty.$$
We thus consider drift with superlinear growth: this will help to come from infinity but unfortunately, this will also generate unstability for discretized dynamics: when one considers the  genuine related Euler scheme given by the kernel 
$K_{\mu,\partial}^{(h)}$, the ``deterministic part''  will satisfy for each $h>0$:
$$ \lim_{x\rightarrow+\infty} x-h \varphi(x)+\int F(x-y)\mu(dy)=-\infty.$$
Among the consequences, this will imply for instance that 
$$\delta_\mu(x)\xrightarrow{x\rightarrow+\infty}1,$$
which in turn involves that the second assumption of the (tightness)  \Cref{prop:tightnessbis} will not be satisfied.
There are several ways to manage this discretization effect (for instance, one may consider adapted step sequences, see \emph{e.g.} \cite{lemaire}) but in this section where our aim is only to provide examples, we choose to introduce the following simple modification by considering:
\begin{equation}\label{eq:formegeneralenoyautronc}
 \tilde{K}_{\mu,\partial}^{(h)}f(x)=\ES[f(\tilde{\Upsilon}_x^h){\bf 1}_{\tilde{\Upsilon}_x^h\in D}]\quad\textnormal{with}\quad 
\tilde{\Upsilon}_x^h=\tronc(x+h b(x,\mu))+\sigma(x,\mu)\Delta \zeta_{h}.
\end{equation}
where, 
\begin{htronc}\hypertarget{htronc}{}
\item{}\label{condhtronc} 
The function $\tronc:\ER^d\mapsto\ER^d$ is ${\cal C}_1$ and 
$$\sup_{x,\mu\in \MM\times{\cal P}(\MM)} \{| \tronc(x+h b(x,\mu))|+\| D\tronc(x+h b(x,\mu))\|\}<+\infty.$$
\end{htronc}

\begin{Rq}  In order that $ \tilde{K}_{\mu,\partial}^{(h)}$ remains an approximation of the continuous dynamics, it is fundamental  to choose a function $\tronc$ which does very slightly modify the dynamics of the genuine Euler scheme. For instance, in the benchmark example \eqref{eq:superlineardd}, a possible choice for $\tronc$ is to define it by:
$$\tronc(x)=\begin{cases} x&\textnormal{if $x\ge -R$}\\
-R-1&\textnormal{if $x\ge -R-1$},
\end{cases}
$$
where $R$ is a positive number, and to take a smooth increasing interpolation on $(-R-1,-R)$. With such a function, one can check that $\mathbf{(H_\tronc)}$ holds true  for any $R>0$.
Such a modification becomes less and less restrictive when $h\rightarrow0$ so that a convergence to the continuous-time dynamics may be true (we do not prove such a property in this part).\\

The above strategy can be also considered in the multidimensional case but the choice of the function $\tronc$ may depend on the model and on the domain. We do not provide additional details.
\end{Rq}



We here consider the second model (Model 2) where the dynamics live in an unbounded subset $\MM$ of $\ER^d$. We have the following proposition:
\begin{prop} \label{prop:model2} Let $h>0$. Assume \ref{condhmv} and \ref{condhtronc}. Assume that
there exists $\alpha>0$ such that $\ES[|\zeta_h|^\alpha]<+\infty$ and that $\zeta_h$ has a ${\cal C}^1$ density $g_h$ w.r.t. the Lebesgue measure with $\|g_h\|_\infty+\|\nabla g_h\|_\infty<+\infty$. Also assume that the following condition holds: for every $M>0$ and $(s_1,s_2)\in(0,+\infty)^2$ with $s_1\le s_2$, there exist $c_1,c_2>0$,  and an integrable function $\mathfrak{p}_h:D\mapsto (0,+\infty)$ such that for every $m\in B(0,M)$ and $\sigma\in \mathbb{M}_{d,d}$ with $s_1\le |{\rm det}(\sigma)|\le s_2$,
\begin{equation}\label{eq:conddensminomajo}
\forall z\in D, \quad c_1 \mathfrak{p}_h(z)\le g_h\left(\sigma^{-1}(z-m)\right)\le c_2 \mathfrak{p}_h(z).
\end{equation}
Then, if $(\gamma_n)$ satisfies \eqref{hyp:pas} and $\sum \gamma_n^2<+\infty$, $(\mu_n)$ is almost surely tight in ${\cal P}(\bar{D})$ and every weak limit $\mu^\star_h$ is a QSD for $(\tilde{K}_{\mu,\partial}^{(h)})_{\mu \in {\cal P}(\bar{D})}$ which satisfies $\mu^\star_h(\partial D)=0$.
\end{prop}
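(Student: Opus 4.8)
The plan is to deduce the statement from \Cref{prop:tightnessbis} — which will give the almost sure tightness of $(\mu_n)$ — and from \Cref{theo:discret}$(ii)$ — which will then identify the weak limits as QSDs. With $h>0$ fixed, everything therefore reduces to checking that the kernel $\tilde K^{(h)}_{\mu,\partial}$ satisfies \ref{condhzero}, \ref{condhun}, \ref{condhdeux}, \ref{condhtrois}, \ref{condhquatre}, \ref{condhvdeux} and $\sup_\mu\|\delta_\mu\|_\infty<1$. The key preliminary remark is that, $\zeta_h$ having density $g_h$ and $\sigma(x,\mu)$ being invertible by uniform ellipticity, $\tilde K^{(h)}_{\mu,\partial}$ carries on $D$ the sub-probability density
$$\tilde p_\mu(x,z)=\frac{1}{|\det\sigma(x,\mu)|}\,g_h\!\left(\sigma(x,\mu)^{-1}\big(z-\tronc(x+hb(x,\mu))\big)\right),\qquad z\in D.$$
By \ref{condhtronc} the point $m_\mu(x):=\tronc(x+hb(x,\mu))$ remains in a fixed ball $B(0,M)$, and by ellipticity and boundedness of $\sigma$ one has $|\det\sigma(x,\mu)|\in[s_1,s_2]$ for suitable $0<s_1\le s_2$; inserting these $M,s_1,s_2$ into \eqref{eq:conddensminomajo} gives $\kappa_1,\kappa_2>0$ with
$$\kappa_1\,\mathfrak{p}_h(z)\ \le\ \tilde p_\mu(x,z)\ \le\ \kappa_2\,\mathfrak{p}_h(z)\qquad\text{for all }(x,\mu,z).$$

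This uniform two-sided density estimate drives almost all the assumptions. Integrating over $D$, $\inf_{x,\mu}\tilde K^{(h)}_{\mu,\partial}{\bf 1}(x)\ge\kappa_1\int_D\mathfrak{p}_h=:c_0>0$, which is \ref{condhzero} with $\ell=1$ and also $\sup_\mu\|\delta_\mu\|_\infty\le 1-c_0<1$. With $\Psi(dz):=(\int_D\mathfrak{p}_h)^{-1}\mathfrak{p}_h(z)\,dz$ on $D$, the bound becomes $\kappa_1'\Psi(f)\le\tilde K^{(h)}_{\mu,\partial}f(x)\le\kappa_2'\Psi(f)$ for every non-negative measurable $f$ and every $(x,\mu)$; hence \ref{condhtrois} (with $\ell=1$), and then \ref{condhquatre} by \Cref{lem:lowerupper} — note that \ref{condhquatre}, the genuinely delicate assumption of \Cref{theo:discret}, is here essentially provided by \eqref{eq:conddensminomajo}. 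For \ref{condhvdeux}, \ref{condhtronc} and boundedness of $\sigma$ give $|\tilde\Upsilon^h_x|\le M+\|\sigma\|_\infty|\zeta_h|$ almost surely, an $(x,\mu)$-free bound with finite $\alpha$-th moment; so $V(x):=(1+|x|^2)^{\alpha/2}$ is inf-compact on $\bar D$ and $\tilde K^{(h)}_{\mu,\partial}V(x)\le\ES[V(M+\|\sigma\|_\infty|\zeta_h|)]=:\beta<+\infty$ uniformly in $(x,\mu)$, yielding \ref{condhvdeux} with $V_\varepsilon\equiv V$, $\beta_\varepsilon\equiv\beta$.

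It remains to check \ref{condhun} and \ref{condhdeux}. For \ref{condhun}: $\PE(\tilde\Upsilon^h_x\in\partial D)=0$ (as $\zeta_h$ has a density), and then dominated convergence against $\kappa_2\mathfrak{p}_h$ together with the joint continuity of $b$ and $\sigma$ gives that $x\mapsto\tilde K^{(h)}_{\mu,\partial}f(x)$ lies in $C_b(\bar D)$ and that $\mu\mapsto\mu\tilde K^{(h)}_{\mu,\partial}$ is weakly continuous. The main obstacle — what I expect to be the technical heart of the proof — is \ref{condhdeux}. The bounded flat derivatives in \ref{condhmv} give $|b(x,\mu)-b(x,\nu)|\vee\|\sigma(x,\mu)-\sigma(x,\nu)\|\lesssim\|\mu-\nu\|_{TV}$ uniformly in $x$, hence (with \ref{condhtronc}) $|m_\mu(x)-m_\nu(x)|\lesssim\|\mu-\nu\|_{TV}$. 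One then bounds $\|\tilde K^{(h)}_{\mu,\partial}(x,\cdot)-\tilde K^{(h)}_{\nu,\partial}(x,\cdot)\|_{TV}\le\tfrac12\int_D|\tilde p_\mu(x,z)-\tilde p_\nu(x,z)|\,dz$ by a location--scale perturbation estimate: writing $\tilde p_\mu(x,\cdot)$ as $g_h$ composed with the affine map $z\mapsto\sigma(x,\mu)^{-1}(z-m_\mu(x))$, the difference is controlled by the $C^1$-regularity of $g_h$ (the bounds $\|g_h\|_\infty,\|\nabla g_h\|_\infty<+\infty$) together with the $\mathfrak{p}_h$-domination of the tails, uniformly in $x$. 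This is exactly where the interplay between the pointwise smoothness of $g_h$ and the integrable tail control over the unbounded $D$ has to be exploited.

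Granting these verifications, \Cref{prop:tightnessbis} (using $\sum\gamma_n^2<+\infty$ and \eqref{hyp:pas}) gives the almost sure tightness of $(\mu_n)$ in ${\cal P}(\bar D)$, and \Cref{theo:discret}$(ii)$ then gives that every weak limit $\mu^\star_h$ of $(\mu_n)$ is a QSD for $(\tilde K^{(h)}_{\mu,\partial})_{\mu\in{\cal P}(\bar D)}$. Finally, $\mu^\star_h(\partial D)=0$: since $\tilde K^{(h)}_{\mu,\partial}$ is carried by the open set $D$, a QSD satisfies ${\cal L}(Y_1\mid\tau>1)=\mu^\star_h$ with $Y_1\in D$ on $\{\tau>1\}$ while $D\cap\partial D=\emptyset$; equivalently, $\mu^\star_h=\Pi_{\mu^\star_h}$ (\Cref{lem:characQSD}$(i)$) and $(\tilde K^{(h)}_{\mu^\star_h,\partial})^{n}(x,\partial D)=0$ for all $n\ge1$, so $\mu^\star_h(\partial D)=\mu^\star_h(\partial D)/(\mu^\star_h\Aa_{\mu^\star_h}{\bf 1})$ with $\mu^\star_h\Aa_{\mu^\star_h}{\bf 1}\ge 1+c_0>1$, forcing $\mu^\star_h(\partial D)=0$.
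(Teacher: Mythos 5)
Your overall route is the paper's own: represent $\tilde K^{(h)}_{\mu,\partial}(x,\cdot)$ through its density on $D$, use \ref{condhtronc} and ellipticity/boundedness of $\sigma$ to place the location in a fixed ball and the determinant in $[s_1,s_2]$, invoke \eqref{eq:conddensminomajo} to get the uniform two-sided bound $\kappa_1\mathfrak{p}_h\le \tilde p_\mu(x,\cdot)\le\kappa_2\mathfrak{p}_h$, deduce \ref{condhtrois}, \ref{condhquatre} (via \Cref{lem:lowerupper}) and $\sup_\mu\|\delta_\mu\|_\infty<1$, get \ref{condhvdeux} as in \Cref{lem:hvdeux}$(ii)$, check \ref{condhdeux} by differentiating the density along $\mu_t=(1-t)\mu+t\nu$ using the flat derivatives, and conclude with \Cref{prop:tightnessbis} and \Cref{theo:discret}$(ii)$, plus absolute continuity of the kernel for $\mu^\star_h(\partial D)=0$ (your fixed-point variant of this last point is fine).

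There is, however, a concrete error in your verification of \ref{condhzero}. That assumption requires $\sup_{x,\mu}\tilde K^{(h),\ell}_{\mu,\partial}{\bf 1}(x)<1$ for some $\ell$, i.e.\ a \emph{uniform positive lower bound on the killing probability} within $\ell$ steps. What you derived from the lower bound in \eqref{eq:conddensminomajo}, namely $\inf_{x,\mu}\tilde K^{(h)}_{\mu,\partial}{\bf 1}(x)\ge \kappa_1\int_D\mathfrak{p}_h=c_0>0$, is exactly the reverse control: it gives $\sup_\mu\|\delta_\mu\|_\infty\le 1-c_0<1$ (the hypothesis of \Cref{prop:tightnessbis} and of \Cref{lem:characQSD}$(ii)$), but says nothing about \ref{condhzero}. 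The upper bound only yields $\tilde K^{(h)}_{\mu,\partial}{\bf 1}(x)\le\kappa_2\int_D\mathfrak{p}_h$, and there is no reason this constant should be $<1$. Since \Cref{theo:discret} genuinely uses \ref{condhzero} (well-definedness and boundedness of $\Aa_\mu$, geometric decay of survival, the Lipschitz property of $\mu\mapsto\Pi_\mu$ in \Cref{lem:pimulip}), your argument as written does not license its application. The paper obtains \ref{condhzero} ``as in Model 1'': because $\tronc(x+hb(x,\mu))$ stays in a fixed ball and $\sigma$ is bounded and uniformly elliptic, the law of $\tilde\Upsilon^h_x$ charges a fixed bounded subset of $D^c$ of positive Lebesgue measure, uniformly in $(x,\mu)$, by positivity of the noise density on the corresponding compact preimage; you need to insert an argument of this type (it is not a consequence of \eqref{eq:conddensminomajo}, which only speaks about $z\in D$). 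A secondary remark: your treatment of \ref{condhdeux} remains a sketch, and the delicate point on the unbounded $D$ is the $z$-integrability of the terms produced by $\partial_t\big(g_h(\sigma_t^{-1}(z-\tilde m_t))|{\rm det}\,\sigma_t|^{-1}\big)$ — in particular the $\nabla g_h$ term carries a factor $z-\tilde m_t$, and the $\mathfrak{p}_h$-domination you invoke controls $g_h$ but not $\nabla g_h$; the paper reduces this to \Cref{lem:controldensity} with $m_t$ replaced by $\tilde m_t=\tronc(x+hb(x,\mu_t))$, and you should at least make that reduction explicit rather than appeal to a generic location--scale perturbation estimate.
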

\begin{Rq} 


$\rhd$ Let us comment Condition \eqref{eq:conddensminomajo}. This assumption is built in order to ensure the conditions of \Cref{lem:lowerupper} which in turn imply the (constraining) assumption \ref{condhquatre}. Condition  \eqref{eq:conddensminomajo} is for instance satisfied when $g_h$ is a positive continuous function such that
$$ g_h(x)\overset{|x|\rightarrow+\infty}{\sim}\frac{1}{|x|^\alpha}\quad \textnormal{with $\alpha>1$.}$$
This allows to consider a large family of L\'evy processes (such as symmetric stable processes) but unfortunately, it does not allow to consider the Brownian motion. In this case, it seems difficult to obtain the ``lower/upper-bound'' of the kernel. Nevertheless, we can only lower-bound and this implies that one preserves the first part of \Cref{theo:discret}.


\begin{Rq} Let us finally comment about the mean-reverting SDEs with sublinear drift, such as Ornstein-Uhlenbeck processes. We do not consider such dynamics in the non-compact case since Assumptions \ref{condhtrois} and \ref{condhquatre} do not hold in this case. Nevertheless, we show in \Cref{lem:hvdeux} that Assumption \ref{condhvdeux} may be true in this setting. Thus, one can obtain tightness of $(\mu_n)$. It is interesting to remark that such a property is new with respect to \cite{BCP} (where only the compact case was studied), even in the case of linear dynamics (by linear, we mean without interaction). Owing to an adaptation of \cite[Theorem 6.4]{BCP} (which does not require \ref{condhtrois} and \ref{condhquatre}), it may be shown that, in the linear setting, every limit point of $(\mu_n)$ is a QSD.
\end{Rq}

\end{Rq}
%
%
%
%


\subsection{Simulations}\label{sec:simu} We provide several numerical illustrations related to the Euler scheme of the following example
\begin{align*}
    d{\xi}_{t}=\gamma \mathbb{E}[{\xi}_{t}|\tau_D>t]dt+dW_{t},
\end{align*}
with $D=]-1,1[$. This example comes from \cite{tough_nolen} (Example 1.6) where the authors are able to give explicit expressions of the related QSDs, taking advantage of the fact that 
$b(x,\mu)=\int x\mu(dx)$, so that $x\mapsto b(x,\mu)$ is constant. In this simple example, it can be shown that the QSDs have densities given by
$$ \pi_b\propto e^{b x}\cos(\frac{\pi}{2} x),$$
where  $b$ is a fixed point of the map 
$$b\mapsto \tanh(\gamma b)-\frac{8\gamma b}{4\gamma^2 b^2+\pi^2}.$$
It can be checked that
\begin{itemize}
\item{} If $\gamma\le \frac{\pi^2}{\pi^2+8}$, 
$b=0$ is the only fixed point so that the unique QSD is $\pi_0=\frac{4}{\pi}\cos(\frac{\pi}{2}x){\bf 1}_{[-1,1]}(x)$.
\item{}  If $\gamma> \frac{\pi^2}{\pi^2+8}$, there are three fixed points $b_-,0,b_+$ with $b_{-}=-b_{+}$ which lead to three QSDs denotes $\pi_{-}$, $\pi_0$ and $\pi_{+}$.
\end{itemize}
In \Cref{fig:sim1}, we compare true (continuous line) and empirical densities (dotted lines). By empirical density, we mean here the convolution of $\mu_n$ with a Gaussian kernel. 
For  $\gamma=0.5$, the convergence holds to the unique QSD whereas for  $\gamma=4$, it is interesting to remark that the three QSDs are possible limit points of the sequence. We chose the initial points $-0.5$, $0$ and $0.5$ in order to increase the probability  of approximating $\pi_{-}$, $\pi_0$ and $\pi_+$ but, of course, starting from $0.5$ for instance, there is a positive probability that it falls in the basin of attraction of $\pi_{-}$ or $\pi_0$. Nevertheless, since the three equilibriums are isolated, it is probably true that the algorithm converges to one of the three QSDs. 
            \begin{figure}[htbp]
           \begin{minipage}{0.45\linewidth}
              \includegraphics[width=\linewidth]{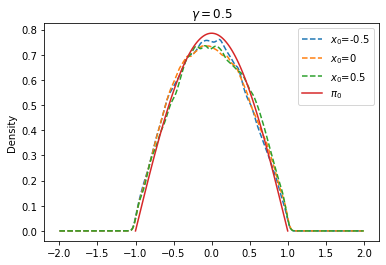}
                       \end{minipage}
      \begin{minipage}{0.45\linewidth}
        \includegraphics[width=\linewidth]{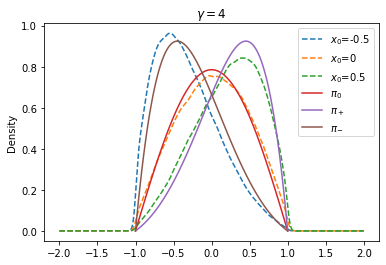}
                        \end{minipage}
                        \caption{Left. $(\gamma,h,n)=(0.5,0.01,10^5)$. Right. $(\gamma,h,n)=(4,0.01,5.10^5)$. }\label{fig:sim1}
 \end{figure}
 To finish these short numerical illustrations, we consider the case $\gamma=1$ in \Cref{fig:sim2}. In this case, there are three QSDs but our simulations suggest that only $\pi_0$ is attractive.
  \begin{figure}[htbp]
  \includegraphics[width=6cm]{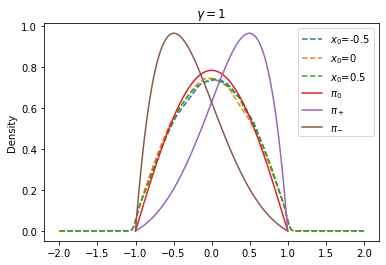}
                        \caption{$(\gamma,h,n)=(1,0.01,10^5)$. }\label{fig:sim2} \end{figure}

\section{Preliminaries, Poisson Equation and limiting dynamics}\label{sec:proof1}
\label{sect:Prelim}
We begin by giving properties related to $\Pi_\mu$ and ${\cal A}_\mu$. Then, we introduce and study the Poisson equation related to the kernel $\kmu$. This study is one of the cornerstones of \cref{prop:asymptoticpseudotrajectory} which will allow us to characterize the limiting dynamics of $(\tilde{\mu}^{(n)})_{n\ge1}$.\\

\noindent \textbf{ Explicit form for $\Pi_\mu$.} Recall that  $ \Aa_\mu$ is a kernel defined by
$$ \Aa_\mu(x,.)=\sum_{n\ge0} K_{\mu,\partial}^n(x,.).$$
\begin{lem}  \label{lem:basics}
Assume \ref{condhzero} and \ref{condhun}. Then,\\

\noindent (i)  $\Aa_\mu$ is well-defined for every $\mu\in{\cal P}(\MM)$ and $\|\Aa_\mu{\bf 1}\|_\infty<+\infty$.\\

\noindent (ii) The probability $\Pi_\mu$ defined by
\begin{equation}\label{explicitpimu}
\Pi_\mu=\frac{\mu \Aa_\mu}{(\mu \Aa_\mu)(\bf 1)}
\end{equation}
is the unique invariant distribution for $\kmu$.\\

\noindent (iii) $\mu^\star$ is a QSD for $(K_{\mu,\partial})_{\mu\in{\cal P}(\MM)}$ if and only if 
$\mu^\star=\Pi_{\mu^\star}$. Existence holds for the QSD under the assumptions of \Cref{lem:characQSD}$(ii)$.
\end{lem}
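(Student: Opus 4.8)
The plan is to obtain (i) and (ii) as elementary ``linear algebra over measures'' consequences of the geometric decay contained in \ref{condhzero}, and (iii) as a bookkeeping translation between the quasi-stationary property and the fixed-point equation $\mu^\star=\Pi_{\mu^\star}$. For (i), I would set $\rho:=\sup_{x\in\MM,\ \mu\in{\cal P}(\MM)}K_{\mu,\partial}^\ell\mathbf{1}(x)<1$; since $K_{\mu,\partial}$ is submarkovian, $K_{\mu,\partial}^r\mathbf{1}\le\mathbf{1}$ for all $r\ge0$, so iterating $K_{\mu,\partial}^{\ell}\mathbf{1}\le\rho\mathbf{1}$ gives $K_{\mu,\partial}^{q\ell+r}\mathbf{1}(x)\le\rho^{q}$ for all $(x,\mu)$ and $0\le r<\ell$, and summing over $n=q\ell+r$ yields $\sum_{n\ge0}K_{\mu,\partial}^n\mathbf{1}(x)\le \ell/(1-\rho)$ uniformly in $(x,\mu)$. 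The partial sums $\sum_{n=0}^{N}K_{\mu,\partial}^n(x,\cdot)$ then form a nondecreasing sequence of finite measures with uniformly bounded total mass, hence (monotone convergence) converge setwise to a finite measure $\Aa_\mu(x,\cdot)$, which is measurable in $x$ as a pointwise limit of measurable maps; this gives (i) with $\|\Aa_\mu\mathbf{1}\|_\infty\le\ell/(1-\rho)$.

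For (ii), I would first observe that the $n=0$ term of $\Aa_\mu$ is the identity kernel, so $(\mu\Aa_\mu)(\mathbf{1})\in[1,\ell/(1-\rho)]$ and $\Pi_\mu$ is a genuine probability; moreover $\kmu\mathbf{1}=K_{\mu,\partial}\mathbf{1}+\delta_\mu=\mathbf{1}$ (with $\delta_\mu=\mathbf{1}-K_{\mu,\partial}\mathbf{1}\in[0,1]$ by submarkovianity), so $\kmu$ is Markovian. Invariance reduces to $(\mu\Aa_\mu)\kmu=\mu\Aa_\mu$: one has $(\mu\Aa_\mu)K_{\mu,\partial}=\mu\sum_{n\ge1}K_{\mu,\partial}^n=\mu\Aa_\mu-\mu$, and testing this against $\mathbf{1}$ gives $(\mu\Aa_\mu)(\delta_\mu)=(\mu\Aa_\mu)(\mathbf{1})-(\mu\Aa_\mu-\mu)(\mathbf{1})=1$, whence $(\mu\Aa_\mu)\kmu=(\mu\Aa_\mu-\mu)+\mu=\mu\Aa_\mu$; normalizing, $\Pi_\mu\kmu=\Pi_\mu$. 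For uniqueness, any invariant probability $\pi$ satisfies $\pi=\pi K_{\mu,\partial}+\pi(\delta_\mu)\mu$, i.e. $\pi({\rm Id}-K_{\mu,\partial})=c\,\mu$ with $c:=\pi(\delta_\mu)\ge0$; iterating $K_{\mu,\partial}$ on the right and telescoping yields $\pi-\pi K_{\mu,\partial}^{N+1}=c\sum_{n=0}^{N}\mu K_{\mu,\partial}^n$, and since $\|\pi K_{\mu,\partial}^{N+1}\|_{TV}\le\rho^{\lfloor(N+1)/\ell\rfloor}\to0$ while the right-hand side increases to $c\,\mu\Aa_\mu$, we get $\pi=c\,\mu\Aa_\mu$; taking total mass forces $c=1/(\mu\Aa_\mu)(\mathbf{1})$, so $\pi=\Pi_\mu$.

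For (iii), in the direction $\mu^\star=\Pi_{\mu^\star}\Rightarrow$ QSD, I would note that invariance of $\mu^\star$ for $\mathbb{K}_{\mu^\star}$ rearranges to $\mu^\star K_{\mu^\star,\partial}=\lambda\mu^\star$ with $\lambda:=\mu^\star(K_{\mu^\star,\partial}\mathbf{1})$, and then run $(Y_n)$ from $Y_0\sim\mu^\star$, proving by induction on $n$ that $\nu_n=\mu^\star$ and $\mathbb{P}(Y_n\in\cdot,\tau>n)=\lambda^n\mu^\star$ (the step uses that the dynamics \eqref{eq:defYn} gives $\mathbb{P}(Y_{n+1}\in A,\tau>n+1\mid{\cal F}_n)=\mathbf{1}_{\{\tau>n\}}K_{\nu_n,\partial}(Y_n,A)$ and the induction hypothesis $\nu_n=\mu^\star$); this is precisely \eqref{def:nbyn}. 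Conversely, if $\mu^\star$ is a QSD then, as recalled after the definition, $\nu_n\equiv\mu^\star$ and the killed chain is Markov with kernel $K_{\mu^\star,\partial}$, so the $n=1$ instance of \eqref{def:nbyn} reads $\mu^\star K_{\mu^\star,\partial}=\lambda\mu^\star$ with $\lambda=\mathbb{P}_{\mu^\star}(\tau>1)$, which rearranges to $\mu^\star\mathbb{K}_{\mu^\star}=\mu^\star$, hence $\mu^\star=\Pi_{\mu^\star}$ by (ii). Finally, existence of a QSD under the hypotheses of \Cref{lem:characQSD}$(ii)$ is exactly the content of that statement, obtained there by a Schauder-type fixed-point argument for $\mu\mapsto\Pi_\mu$ on ${\cal P}(\MM)$, with \ref{condhvdeux} and $\inf_\mu\mu K_{\mu,\partial}\mathbf{1}>0$ used to restrict to a compact set of probabilities in the non-compact case.

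I expect the main obstacle to be the bookkeeping in (iii): carrying the induction cleanly through the definition of $(Y_n)$ via the Bernoullis $(\Theta_n)$ and the conditioning on $\{\tau>n\}$ while tracking $\nu_n$, including the degenerate possibility $\lambda=0$ (which the remark following the definition already absorbs). On the technical side, in (ii) one must be slightly careful to justify that $\pi K_{\mu,\partial}^{N+1}\to0$ in total variation so that the formal identity $\pi=\pi(\delta_\mu)\,\mu({\rm Id}-K_{\mu,\partial})^{-1}$ is legitimate for (possibly non-normalized) measures; everything else is routine, and $\mathbf{(A_1)}$ is in fact not needed for (i)--(ii), entering only through the existence statement of (iii).
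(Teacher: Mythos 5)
Your (i), your invariance computation and normalization in (ii), and the two directions of the QSD characterization in (iii) are all correct, and (i) plus the identity $(\mu\Aa_\mu)(\delta_\mu)=1$ are essentially the paper's own computation. Your \emph{uniqueness} argument in (ii), however, takes a genuinely different route: you write any invariant probability as $\pi=\pi K_{\mu,\partial}^{N+1}+\pi(\delta_\mu)\sum_{n=0}^{N}\mu K_{\mu,\partial}^{n}$, let $N\to\infty$ using the geometric decay from \ref{condhzero}, and identify $\pi=\pi(\delta_\mu)\,\mu\Aa_\mu$ directly. The paper instead Poissonizes the kernel ($P_\mu$ in \eqref{eq:poisssss}), derives a Doeblin minorization $P_\mu\ge\varepsilon\mu$ and concludes by a coupling argument. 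Your version is more elementary and even re-derives the explicit formula \eqref{explicitpimu} as a by-product; the paper's minorization is the kind of estimate it reuses elsewhere (cf.\ \Cref{rem:poissonization}), but for this lemma the two are interchangeable. Your aside that \ref{condhun} is not really used in (i)--(ii) is essentially right, and your handling of the degenerate case $\lambda=0$ in (iii) is fine.

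The genuine gap is the existence claim at the end of (iii). You defer it to \Cref{lem:characQSD}$(ii)$, "obtained there by a Schauder-type fixed-point argument", but in the paper \Cref{lem:characQSD} is not proved separately: its proof \emph{is} the proof of the present lemma, so pointing back to it is circular and the fixed-point argument must be supplied here. Moreover, the plan you sketch (a fixed point of $\mu\mapsto\Pi_\mu$ on ${\cal P}(\MM)$) is not the argument that works under the stated hypotheses: continuity of $\mu\mapsto\Pi_\mu$ for the weak topology is not available from \ref{condhzero}--\ref{condhun} alone (the paper only gets it later, under \ref{condhdeux}, in \Cref{lem:pimulip}), and \ref{condhvdeux} does not obviously make a sublevel set $\{\mu:\ \mu(V_\varepsilon)\le M\}$ stable under $\mu\mapsto\Pi_\mu$, since $\Aa_\mu$ sums \emph{all} powers of $K_{\mu,\partial}$ and the resulting bound on $\Pi_\mu(V_\varepsilon)$ is not contracting. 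The paper instead applies the fixed-point theorem to $\Upsilon(\mu)=\mu K_{\mu,\partial}/(\mu K_{\mu,\partial}\mathbf{1})$, whose fixed points are QSDs by \eqref{eq:fixedpoint2}, whose continuity follows from \ref{condhun} once the trivial case $\mu K_{\mu,\partial}\mathbf{1}=0$ (which already yields a QSD) is set aside, and whose stability on the compact convex set $\{\mu:\ \mu(V_{\alpha/2})\le M\}$, with $\alpha=\inf_\mu \mu K_{\mu,\partial}\mathbf{1}$ and $M=2\beta_{\alpha/2}/\alpha$, is a one-line consequence of \ref{condhvdeux} applied with $\varepsilon=\alpha/2$; in the compact case the same theorem is applied on all of ${\cal P}(\MM)$. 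You need to write out this step (or an equivalent one) rather than cite \Cref{lem:characQSD}.
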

\begin{proof}[Proof of  \Cref{lem:basics}] $(i)$ By \ref{condhzero} and an induction,
 \begin{equation}\label{controlesurvie}
 \PE_x(\tau_{\mu}>n\ell)=K_{\mu,\partial}^{n\ell}{\bf 1}(x)\le \rho^n.
 \end{equation}
 Thus, 
 $$\Aa_\mu{\bf 1}(x)=\sum_{n\ge0}\PE_x(\tau_{\mu}>n)=\ES_x[\tau]\le \frac{\ell}{1-\rho}.$$
 
 \noindent $(ii)$ For any bounded continuous function $f$,
\begin{align*}
\mu \Aa_\mu \kmu(f)&=\mu\left(\sum_{n\ge0} (K_{\mu,\partial}^{n+1} f+K_{\mu,\partial}^n\delta_\mu \mu(f))\right)=\sum_{n\ge0}\mu K_{\mu,\partial}^{n+1}f+\mu(f)\mu(\sum_{n\ge0} K_{\mu,\partial}^n(\delta_\mu))\\
&=\mu \Aa_\mu f
\end{align*}
since $\delta_\mu={\bf 1}-K_{\mu,\partial}{\bf 1}$, which implies that
$$\sum_{n\ge0} K_{\mu,\partial}^n(\delta_\mu))=\sum_{n\ge0} K_{\mu,\partial}^n{\bf 1}
-\sum_{n\ge0} K_{\mu,\partial}^{n+1}{\bf 1}={\bf 1}.$$
To prove uniqueness, let us set 
\begin{equation}\label{eq:poisssss}
P_\mu=e^{-1}\sum_{k\ge 0} \frac{\kmu^k}{k!}.
\end{equation}
One easily checks that if  $\Pi_\mu$ is an invariant distribution for $\kmu$, $\Pi_\mu$ is also invariant for $P_\mu$ and hence, it is enough to prove uniqueness for $P_\mu$.
By \eqref{supkmufini} and the very definition of $\kmu$, we have for every $k\ge1$,
$$\kmu^k\ge (K_{\mu,\partial}^{k-1}\delta) \mu=(K_{\mu,\partial}^{k-1}{\bf 1}-K_{\mu,\partial}^{k}{\bf 1})\mu,$$
so that there exists $c>0$ such that
$$ P_\mu\ge c\sum_{k=1}^{\ell-1} {\kmu^k}\ge c\left(\sum_{k=1}^{\ell-1}K_{\mu,\partial}^{k-1}{\bf 1}-K_{\mu,\partial}^{k}{\bf 1}\right)\mu=c(1-K_{\mu,\partial}^{\ell}{\bf 1})\mu\ge \varepsilon \mu$$
with $\varepsilon=c(1-\rho)>0.$
This property classically implies uniqueness of the invariant distribution of $P_\mu$ (by a coupling argument).\\

\noindent $(iii)$ The definition of $\kmu$ implies that a probability $\Pi_\mu$ is invariant if and only if:
$$\Pi_\mu K_{\mu,\partial}=(\Pi_\mu K_{\mu,\partial} {\bf 1})\mu.$$
Thus, $\mu=\Pi_\mu$ if and only if 
\begin{equation}\label{eq:fixedpoint2}
\mu K_{\mu,\partial}=(\mu K_{\mu,\partial} {\bf 1})\mu.
\end{equation}
Now, if $\mu$ is a QSD, then \eqref{eq:fixedpoint2} holds true since it corresponds to the definition at time $1$: for all Borel set $A$ of $\MM$,
$$\mu(A)(\mu K_{\mu,\partial} {\bf 1})=\mu(A)\PE_\mu(\tau>1)=\PE_\mu(Y_1\in A, \tau>1)=(\mu K_{\mu,\partial})(A).$$
Conversely, if \eqref{eq:fixedpoint2} holds, let us first remark that $\mu$ is trivially a QSD if $\mu K_{\mu,\partial}{\bf 1}=0$. Otherwise,  one shows by induction that \eqref{def:nbyn} holds true. The initialization follows from the above equality. If the property is true until time $n$, then 
$\PE_\mu(\tau>n)=(\mu K_{\mu,\partial}{\bf 1})^n$ and ${\cal L}(Y_n|\tau>n)=\mu$. Noting that for all Borel set $A$ of $\MM$, $\{Y_{n+1}\in A\}\subset \{\tau>n+1\}$,
we then  deduce from the Markov property that
\begin{equation}\label{eq:preuveQSD}
\begin{split}
\PE(Y_{n+1}\in A, \tau>n+1)&=\PE(Y_{n+1}\in A,\tau>n)=\ES_\mu[ K_{\mu,\partial}(x,A) {\bf 1}_{\tau>n}]\\
&=\mu K_{\mu,\partial}(A)\PE_\mu(\tau>n),
\end{split}
\end{equation}
where in the last equality, we used the recurrence hypothesis applied to $f$ defined by $f(x)=K_{\mu,\partial}(x,A)$.
By \eqref{eq:fixedpoint2}, we deduce that
\begin{equation*}
\PE(Y_{n+1}\in A, \tau>n+1)=\mu(A)(\mu K_{\mu,\partial} {\bf 1})\PE_\mu(\tau>n)=\mu(A)\PE_\mu(\tau>n+1),
\end{equation*}
where in the last inequality, we used \eqref{eq:preuveQSD} with $A=\MM$. The equivalence is proved.\\

\noindent Let us prove existence of the QSD. First, note that if there exists $\mu\in{\cal P}(\MM)$ such that
$\mu K_{\mu,\partial}{\bf 1}=0$, then $\mu$ is a trivial QSD (since $\PE_\mu(\tau>1)=0$ in this case). Thus, we can suppose that 
$\mu K_{\mu,\partial}{\bf 1}>0$ for every $\mu\in{\cal P}(\MM)$. By \ref{condhun}, this involves that the map
$$\Upsilon : \mu\mapsto \frac{\mu K_{\mu,\partial}}{\mu K_{\mu,\partial}{\bf 1}}$$
is continuous on ${\cal P}(\MM)$. From the first part of the proof of $(iii)$ (see \eqref{eq:fixedpoint2}), a fixed point of $\Upsilon$ is  a QSD. Thus, it follows from Kakutani fixed point theorem
that existence will hold for the QSD as soon as there exist $\varepsilon$ and $M$ such that $\Upsilon$ is stable on the  convex compact\footnote{The set ${\cal V}_{M,\varepsilon}$ is clearly convex and the inf-compactness of $V_\varepsilon$ implies that ${\cal V}_{M,\varepsilon}$ is a closed tight subset of ${\cal P}(\MM)$ and thus a compact subset of ${\cal P}(\MM)$. } subspace ${\cal V}_{M,\varepsilon}$ of ${\cal P}(\MM)$ defined (with the notations of \ref{condhvdeux}) by
$${\cal V}_{M,\varepsilon}=\{\mu\in{\cal P}(\MM), \mu(V_\varepsilon)\le M\}.$$
If  $\alpha:=\inf_{\mu \in{\cal P}(\MM)} \mu K_{\mu,\partial}{\bf 1}>0$, and \ref{condhvdeux} holds, then for all $\varepsilon>0$,
$$ \Upsilon(\mu)(V_\varepsilon)\le \frac{\varepsilon}{\alpha} \mu(V_\varepsilon)+\frac{\beta_{\varepsilon}}{\alpha}.$$
Taking $\varepsilon=\frac{\alpha}{2}$, we get
$$ \forall \mu \in{\cal P}(\MM),\quad \Upsilon(\mu)(V_\varepsilon)\le \frac{1}{2} \mu(V_\frac{\alpha}{2})+\frac{\beta_{\alpha/2}}{\alpha}.$$
Thus, setting $M= \frac{2\beta_{\alpha/2}}{\alpha}$, we get 
$$\Upsilon({\cal V}_{M,\frac{\alpha}{2}})\subset {\cal V}_{M,\frac{\alpha}{2}}.$$
This concludes the proof.
\end{proof}
\begin{lem}\label{lem:pimulip} Assume \ref{condhzero}, \ref{condhun} and \ref{condhdeux}. Then, $\mu\mapsto \Aa_\mu$ is Lipschitz for the TV-distance, uniformly in $x\in\MM$, and $\mu\mapsto \Pi_\mu$ is Lipschitz for the TV-distance. 
\end{lem}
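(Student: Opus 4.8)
The plan is to leverage the explicit formula $\Aa_\mu = \sum_{n\ge 0} K_{\mu,\partial}^n$ together with the geometric decay $\|K_{\mu,\partial}^{n\ell}\mathbf{1}\|_\infty \le \rho^n$ established in \Cref{lem:basics}$(i)$, and then differentiate the product $\Pi_\mu = \mu\Aa_\mu / (\mu\Aa_\mu)(\mathbf{1})$ with respect to $\mu$.

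First I would control $\mu\mapsto \Aa_\mu$ in TV, uniformly in the starting point. Write the telescoping identity
$$
K_{\mu,\partial}^n - K_{\nu,\partial}^n = \sum_{k=0}^{n-1} K_{\mu,\partial}^k \big(K_{\mu,\partial}-K_{\nu,\partial}\big) K_{\nu,\partial}^{n-1-k}.
$$
Applied to a test function $f$ with $\|f\|_\infty\le 1$, the factor $K_{\nu,\partial}^{n-1-k}f$ has sup-norm at most $\|K_{\nu,\partial}^{n-1-k}\mathbf{1}\|_\infty$, which decays geometrically in $n-1-k$ by \ref{condhzero}; the middle factor contributes $\|(K_{\mu,\partial}-K_{\nu,\partial})g\|_\infty \le C\|\mu-\nu\|_{TV}$ for $\|g\|_\infty\le 1$ by \ref{condhdeux} (here $g = K_{\nu,\partial}^{n-1-k}f$, rescaled); and $K_{\mu,\partial}^k$ is a sub-Markovian operator so it does not increase the sup-norm. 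Summing over $k$ and then over $n$, the double sum $\sum_{n\ge0}\sum_{k=0}^{n-1} \|K_{\nu,\partial}^{n-1-k}\mathbf{1}\|_\infty$ converges (it is, up to reindexing, $\big(\sum_m \|K_{\nu,\partial}^m\mathbf{1}\|_\infty\big)\cdot(\text{number of }k)$—one has to be slightly careful, but the geometric decay of $\|K_{\nu,\partial}^m\mathbf{1}\|_\infty$ makes $\sum_n \sum_{k} \rho^{\lfloor(n-1-k)/\ell\rfloor}$ finite after noting the inner count of $k$ for fixed $n-1-k=m$ contributes a polynomial factor dominated by the geometric one). This yields a constant $C'$ with $\sup_{x}\|\Aa_\mu(x,\cdot)-\Aa_\nu(x,\cdot)\|_{TV}\le C'\|\mu-\nu\|_{TV}$.

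Next I would pass from $\Aa_\mu$ to $\mu\Aa_\mu$ and then to $\Pi_\mu$. For the measure $\mu\Aa_\mu$ one adds one further discrepancy: $\mu\Aa_\mu - \nu\Aa_\nu = (\mu-\nu)\Aa_\mu + \nu(\Aa_\mu-\Aa_\nu)$; the first term is bounded in TV by $\|\mu-\nu\|_{TV}\,\|\Aa_\mu\mathbf{1}\|_\infty \le \frac{\ell}{1-\rho}\|\mu-\nu\|_{TV}$ using \Cref{lem:basics}$(i)$, and the second by the estimate just obtained. Finally, $\Pi_\mu$ is the normalization of $\mu\Aa_\mu$; since the total mass $(\mu\Aa_\mu)(\mathbf{1})=\mathbb{E}_\mu[\tau]$ is bounded above by $\frac{\ell}{1-\rho}$ and below by $1$ (as $\Aa_\mu\mathbf{1}\ge 1$), the map $m\mapsto m/m(\mathbf 1)$ is Lipschitz on this range of masses in TV (standard: $\|\frac{m_1}{m_1(\mathbf1)}-\frac{m_2}{m_2(\mathbf1)}\|_{TV}\le \frac{1}{m_1(\mathbf 1)}\|m_1-m_2\|_{TV} + \frac{|m_1(\mathbf1)-m_2(\mathbf1)|}{m_1(\mathbf1)}$, and $|m_1(\mathbf1)-m_2(\mathbf1)|\le\|m_1-m_2\|_{TV}$), so composing gives the Lipschitz continuity of $\mu\mapsto\Pi_\mu$.

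The main obstacle is the bookkeeping in the telescoping sum: one must make sure the geometric decay coming only from blocks of length $\ell$ (not from every single application of the kernel) still beats the polynomial number of terms in the double sum, and that \ref{condhdeux}, which is stated for $K_{\mu,\partial}(x,\cdot)$ against a single test function, is correctly applied to $K_{\nu,\partial}^{n-1-k}f$ after rescaling so its sup-norm is $\le 1$ — here one should absorb the factor $\|K_{\nu,\partial}^{n-1-k}\mathbf 1\|_\infty$ into the constant rather than lose it. Everything else is routine manipulation of sub-Markovian kernels and the total variation norm.
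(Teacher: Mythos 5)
Your telescoping decomposition is exactly the paper's starting point, and the final normalization step (using $\mu\Aa_\mu{\bf 1}\ge 1$ and the elementary Lipschitz bound for $m\mapsto m/m({\bf 1})$) is also as in the paper. However, the convergence of the double sum, which is the heart of the estimate, does not work as you set it up. You bound the outer factor $K_{\mu,\partial}^k$ only as a sup-norm contraction and keep the geometric decay coming solely from the inner factor $K_{\nu,\partial}^{n-1-k}f$, so each term of the telescoping sum is bounded by $C\rho^{\lfloor (n-1-k)/\ell\rfloor}\|\mu-\nu\|_{TV}$. The resulting double sum
$$\sum_{n\ge 1}\sum_{k=0}^{n-1}\rho^{\lfloor (n-1-k)/\ell\rfloor}$$
diverges: for each fixed $n$ the inner sum over $k$ is at least $1$ (the term $k=n-1$), so summing over $n$ gives $+\infty$; equivalently, for fixed $m=n-1-k$ the number of admissible pairs $(n,k)$ is infinite, not polynomial as your parenthetical claims. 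So the argument as written does not give $\sup_x\|\Aa_\mu(x,\cdot)-\Aa_\nu(x,\cdot)\|_{TV}<\infty$, let alone a Lipschitz bound.

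The fix is the one the paper uses: the geometric decay from \ref{condhzero} must be exploited on \emph{both} sides of the middle factor. Since $K_{\mu,\partial}^k$ is sub-Markovian, for any bounded $h$ one has $|K_{\mu,\partial}^k h(x)|\le \|h\|_\infty K_{\mu,\partial}^k{\bf 1}(x)\le C\rho^k\|h\|_\infty$ uniformly in $\mu$ (using that $n\mapsto K_{\mu,\partial}^n{\bf 1}$ is non-increasing to pass from blocks of length $\ell$ to all $n$). Applying this to $h=(K_{\mu,\partial}-K_{\nu,\partial})K_{\nu,\partial}^{n-1-k}f$ and combining with \ref{condhdeux} and the decay of $\|K_{\nu,\partial}^{n-1-k}{\bf 1}\|_\infty$, each term of the telescoping sum is bounded by $C\rho^{n}\|f\|_\infty\|\mu-\nu\|_{TV}$ (up to constants), so the sum over $k$ contributes a factor $n$ and $\sum_n n\rho^n<+\infty$ closes the estimate. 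With that correction the remainder of your argument goes through and coincides with the paper's proof.
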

\begin{proof}
Recall that $\Aa_\mu=\sum_{n\ge0} K_{\mu,\partial}^n$. We use the following decomposition:
$$  K_{\mu,\partial}^n- K_{\nu,\partial}^n=\sum_{k=1}^n K_{\mu,\partial}^{n-k}(K_{\mu,\partial}-K_{\nu,\partial}) K_{\nu,\partial}^{k-1}.$$
By \ref{condhzero}, $\sup_{\mu} \|K_{\mu,\partial}^\ell {\bf 1}\|_{\infty}<1,$
so that using that $n\mapsto K_{\mu,\partial}^n {\bf 1}$ is non-increasing, we deduce that there exists a constant $C$ and a $\rho<1$ such that 
$$\sup_{\mu} \|K_{\mu,\partial}^n {\bf 1}\|_{\infty}\le C \rho^n.$$
Thus, for every bounded measurable function $f$ on $\MM$, for every $x\in\MM$,
\begin{align*}
|K_{\mu,\partial}^{n-k}(K_{\mu,\partial}-K_{\nu,\partial}) K_{\nu,\partial}^{k-1}f(x,.)&\le C\rho^{n-k} \|(K_{\mu,\partial}-K_{\nu,\partial})K_{\nu,\partial}^{k-1}f\|_{\infty}\\
&\le C\rho^{n-k} \|\mu-\nu\|_{TV} \|K_{\nu,\partial}^{k-1} f\|_{\infty}\le C \|f\|_\infty \rho^n \|\mu-\nu\|_{TV},
\end{align*}
where $C$ may have changed from line to line and where we used \ref{condhdeux}.
This means that
\begin{align*}
\sup_{x\in\MM} \|K_{\mu,\partial}^{n-k}(K_{\mu,\partial}-K_{\nu,\partial}) K_{\nu,\partial}^{k-1}(x,.)\|_{TV}
\le C \rho^n \|\mu-\nu\|_{TV},
\end{align*}
and it follows that 
$$\sup_{x\in\MM} \|\Aa_\mu(x,.)-\Aa_\nu(x,.)\|_{TV}\le \sum n\rho^n<+\infty.$$
This easily implies that $\mu\mapsto \mu\Aa_\mu$ and thus $\mu\mapsto \Aa_\mu{\bf 1}$ are Lipschitz for the TV-norm. Using that 
$\mu \Aa_\mu{\bf 1}\ge 1$, one then deduces that $\mu\mapsto \Pi_\mu$ is also Lipschitz.
\end{proof}

\noindent \textbf{Poisson Equation related to $\kmu$.}
Let $(\kmu)_{\mu\in{\cal P}({\MM})})$ denote a family of transition kernels satisfying the following assumptions:
\begin{pzero}\hypertarget{pzero}{}
\item{}\label{condpzero} For every $\mu \in{\cal P}(\MM)$, $\kmu$ admits a unique invariant distribution ${\Pi}_\mu$ and $\mu\mapsto \Pi_\mu$ is Lipschitz continuous on ${\cal P}(\MM)$ for the TV-distance.
\end{pzero}
\begin{pun}\hypertarget{pun}{}
\item{}\label{condpun} There exist $\ell \in \mathbb{N}$, $\ell \ge1$, $\varepsilon>0$ and $\Psi\in{\cal P}({\MM})$ such that for every $\mu\in{\cal P}(\MM)$,
$$ \kmu^\ell(.,dy)\ge \varepsilon \Psi(dy).$$
\end{pun}
\begin{pdeux}\hypertarget{pdeux}{}
\item{}\label{condpdeux} $\mu \mapsto \kmu(x,.)$ is Lipschitz continuous for the ${\rm TV}$-distance on ${\cal P}({\MM})$, uniformly in $x$; namely, there exists $c_2>0$ such that for every $x\in{\cal P}(\MM)$ and every $\mu$ and $\nu \in{\cal P}(\MM)$,
$$\|\kmu(x,.)-\knu(x,.)\|_{TV}\le c_2\|\mu-\nu\|_{TV}.$$
\end{pdeux}
For a fixed $\mu$ and a given function $f:{\MM}\rightarrow\ER$, let us consider the Poisson equation
\begin{equation}\label{Poissonequa}
 f-\Pi_\mu f= ({\rm Id}-\kmu) g.
 \end{equation}
We have the following result:
\begin{lem}[Poisson equation]\label{prop:poisequa}
Assume \ref{condpzero}, \ref{condpun} and \ref{condpdeux}. For any $\mu\in{\cal P}({\MM})$, for any measurable function  $f:{\MM}\rightarrow\ER$, the Poisson equation \eqref{Poissonequa} admits a unique solution denoted $Q_\mu f$ and
defined by
\begin{equation}
\label{eq:def-Q}
 Q_\mu f(x)=\sum_{n=0}^{+\infty} \left(\kmu^n f(x) -\Pi_\mu (f)\right).
\end{equation}
Furthermore, there exists a constant $C$ such that,
\begin{itemize}
\item[(a)]  For every $\mu,\alpha \in {\cal P}(\MM)$,  
$$| \alpha Q_\mu f | \leq C \Vert f \Vert_\infty \Vert \alpha - \Pi_\mu \Vert_{VT}.$$
\item[(b)] For every $\mu \in{\cal P}(\MM)$, 
$$\Vert Q_\mu f - Q_\nu f \Vert_\infty \leq C \Vert f \Vert_\infty  \Vert \mu- \nu \Vert_{TV}.$$
\end{itemize}
\end{lem}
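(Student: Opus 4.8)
The plan is to follow the classical construction of the Poisson solution via the series $Q_\mu f = \sum_{n\ge 0}(\kmu^n f - \Pi_\mu f)$, with the convergence of the series coming from the minorization assumption \ref{condpun}, and the Lipschitz estimates~(a) and (b) obtained by estimating the tails of this series together with the Lipschitz dependence of $\kmu$ and $\Pi_\mu$ on $\mu$. First I would record the standard consequence of \ref{condpun}: the minorization $\kmu^\ell(\cdot,dy)\ge\varepsilon\Psi(dy)$ yields, via a coupling/Dobrushin-coefficient argument, a uniform geometric contraction in total variation, namely there exist $C_0\ge 1$ and $\rho\in(0,1)$ (independent of $\mu$) such that $\|\alpha\kmu^n - \beta\kmu^n\|_{TV}\le C_0\rho^n\|\alpha-\beta\|_{TV}$ for all probability measures $\alpha,\beta$ and all $n$. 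Applied with $\beta=\Pi_\mu$ (which is $\kmu$-invariant), this gives $\|\alpha\kmu^n-\Pi_\mu\|_{TV}\le C_0\rho^n\|\alpha-\Pi_\mu\|_{TV}$, hence $|\kmu^nf(x)-\Pi_\mu f|=|(\delta_x\kmu^n-\Pi_\mu)(f)|\le C_0\rho^n\|f\|_\infty\|\delta_x-\Pi_\mu\|_{TV}\le 2C_0\rho^n\|f\|_\infty$. So the series defining $Q_\mu f$ converges absolutely and uniformly in $x$, and summing the bound with $\alpha=\delta_x$ already gives~(a) in the case $\alpha=\delta_x$; integrating in $\alpha$ (or directly writing $\alpha Q_\mu f=\sum_n(\alpha\kmu^n-\Pi_\mu)(f)$ and using $\|\alpha\kmu^n-\Pi_\mu\|_{TV}\le C_0\rho^n\|\alpha-\Pi_\mu\|_{TV}$) gives~(a) in general.

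Next I would check that $Q_\mu f$ indeed solves \eqref{Poissonequa} and is the unique solution. For existence: a telescoping computation, $({\rm Id}-\kmu)Q_\mu f = \sum_{n\ge0}(\kmu^n f-\Pi_\mu f)-\sum_{n\ge0}(\kmu^{n+1}f-\Pi_\mu f) = \kmu^0 f-\Pi_\mu f = f-\Pi_\mu f$, where one uses $\kmu\Pi_\mu f=\Pi_\mu f$ (as $\Pi_\mu f$ is a constant and $\kmu$ is Markov) and the absolute convergence to justify rearrangement. For uniqueness: if $g$ and $g'$ both solve \eqref{Poissonequa}, then $h:=g-g'$ satisfies $h=\kmu h$, so $h=\kmu^n h$ for all $n$; evaluating at two points and using the contraction $\|\delta_x\kmu^n-\delta_y\kmu^n\|_{TV}\to 0$ forces $h$ to be constant (one must assume $h$ bounded, or note that the canonical solution is the unique bounded one up to additive constants, which is the standard convention — I would state the uniqueness as ``unique up to an additive constant among bounded functions,'' or simply single out the canonical solution given by the series, which has $\Pi_\mu(Q_\mu f)=0$).

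For~(b), the Lipschitz dependence in $\mu$, I would split $Q_\mu f - Q_\nu f = \sum_{n\ge0}\big[(\kmu^n f-\Pi_\mu f)-(\knu^n f-\Pi_\nu f)\big]$ and estimate term by term. Write $\kmu^n f-\knu^n f = \sum_{k=1}^n \kmu^{n-k}(\kmu-\knu)\knu^{k-1}f$ (the same telescoping identity used in \Cref{lem:pimulip}); bound $\|(\kmu-\knu)\knu^{k-1}f\|_\infty\le c_2\|\mu-\nu\|_{TV}\|\knu^{k-1}f\|_\infty\le c_2\|\mu-\nu\|_{TV}\|f\|_\infty$ by \ref{condpdeux}, and bound the action of $\kmu^{n-k}$ on a \emph{zero-mean-under-}$\Pi_\mu$ function by $C_0\rho^{n-k}$ times its sup-norm, noting $(\kmu-\knu)\knu^{k-1}f$ has the same $\kmu$-average as... — here one has to be slightly careful, since $(\kmu-\knu)\knu^{k-1}f$ need not be $\Pi_\mu$-centered; the clean fix is to use that for any bounded $\varphi$, $\|\kmu^{m}\varphi-\Pi_\mu\varphi\cdot\mathbf 1\|_\infty\le C_0\rho^m\,\mathrm{osc}(\varphi)\le 2C_0\rho^m\|\varphi\|_\infty$, and the constants $\Pi_\mu\varphi$ for $\varphi=(\kmu-\knu)\knu^{k-1}f$ sum to something of size $O(\|\mu-\nu\|_{TV}\|f\|_\infty)$ as well. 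Combined with $|\Pi_\mu f-\Pi_\nu f|\le C\|f\|_\infty\|\mu-\nu\|_{TV}$ from \ref{condpzero}, summing $\sum_n n\rho^n<\infty$ yields~(b). The main obstacle is precisely this last bookkeeping: organizing the double sum so that the non-centered increments $(\kmu-\knu)\knu^{k-1}f$ are handled uniformly and the geometric factors $\rho^{n-k}$ close the sum; everything else is routine given the uniform contraction from \ref{condpun} and the Lipschitz inputs from \ref{condpzero} and \ref{condpdeux}.
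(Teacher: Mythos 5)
Your treatment of the well-definition of $Q_\mu f$, of estimate (a), and of the fact that the series solves the Poisson equation is correct and is essentially the paper's own argument: the minorization \ref{condpun} gives a uniform Doeblin/Dobrushin contraction $\|\alpha\kmu^{n}-\beta\kmu^{n}\|_{TV}\le C_0\rho^{n}\|\alpha-\beta\|_{TV}$, and (a) follows by summing. (Your caveat that uniqueness should be understood among bounded functions, up to an additive constant or with the normalization $\Pi_\mu(Q_\mu f)=0$, is a fair and in fact more careful reading than the paper's one-line claim.)

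The gap is in (b): with the estimates you actually state, the series does not converge. Writing $T_n(x)=(\kmu^nf(x)-\Pi_\mu f)-(\knu^nf(x)-\Pi_\nu f)$ and telescoping $\kmu^nf-\knu^nf=\sum_{k=1}^{n}\kmu^{n-k}\varphi_k$ with $\varphi_k=(\kmu-\knu)\knu^{k-1}f$, your two bounds ($\|\varphi_k\|_\infty\le c_2\|\mu-\nu\|_{TV}\|f\|_\infty$ from \ref{condpdeux}, and a factor $C_0\rho^{n-k}$ from the contraction after centering) give at best $\sum_{k=1}^{n}\rho^{n-k}\,\|\mu-\nu\|_{TV}\|f\|_\infty=O(\|\mu-\nu\|_{TV}\|f\|_\infty)$ \emph{uniformly in $n$}, not $O(n\rho^{n}\|\mu-\nu\|_{TV}\|f\|_\infty)$; the outer sum over $n$ then diverges, and the separate treatment of the constants $\Pi_\mu\varphi_k$ does not repair this, since for each $n$ their partial sum (minus $\Pi_\mu f-\Pi_\nu f$) must itself be shown to be exponentially small in $n$. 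The missing ingredient is a decay in $k$: because $\kmu(x,\cdot)-\knu(x,\cdot)$ has total mass zero, $(\kmu-\knu)\knu^{k-1}f=(\kmu-\knu)\bigl(\knu^{k-1}f-\Pi_\nu f\,\mathbf 1\bigr)$, whence $\|\varphi_k\|_\infty\le 2C_0c_2\rho^{k-1}\|\mu-\nu\|_{TV}\|f\|_\infty$. With this, each $(n,k)$ term carries $\rho^{n-k}\rho^{k-1}=\rho^{n-1}$, the per-$n$ bound becomes $Cn\rho^{n-1}\|\mu-\nu\|_{TV}\|f\|_\infty$, and $\sum_n n\rho^{n}<\infty$ closes the argument, together with $\|\Pi_\mu-\Pi_\nu\|_{TV}\lesssim\|\mu-\nu\|_{TV}$ from \ref{condpzero} and the identity $\sum_{k\le n}\Pi_\mu\varphi_k-(\Pi_\mu f-\Pi_\nu f)=-(\Pi_\mu-\Pi_\nu)(\knu^{n}f-\Pi_\nu f)=O(\rho^{n}\|\mu-\nu\|_{TV}\|f\|_\infty)$ for the constant terms. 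Note that the paper sidesteps this entirely by a different organization: using the explicit Doeblin decomposition $\kmu^{\ell}=\varepsilon\Psi+(1-\varepsilon)R_\mu$ it writes $(\delta_x-\Pi_\mu)\kmu^{n\ell+j}=(1-\varepsilon)^{n}(\delta_x-\Pi_\mu)R_\mu^{n}\kmu^{j}$, so the geometric weight $(1-\varepsilon)^{n}$ is factored out in front and only the crude linear-in-$n$ Lipschitz bounds on $\alpha\mapsto\alpha R_\mu^{n}$ and $\alpha\mapsto\alpha\kmu^{j}$ of \Cref{lem:lip-prel} are needed; your route is a legitimate alternative only once the centering of $\knu^{k-1}f$ described above is added.
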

\begin{Rq}\label{rem:poissonization}  $\rhd$ This result will be applied to $\kmu$ defined by \eqref{eq:kmudef} but it applies to every family of kernels satisfying \ref{condpzero}, \ref{condpun} and \ref{condpdeux}.\\

\noindent $\rhd$ In the proof of this result, one can check that the fact that $\Psi$ does not depend on $\mu$ only plays a role in \eqref{eq:deppsi} and that the result is still true if Assumption \ref{condpun} is satisfied with a probability $\Psi_\mu$ where $\mu\mapsto \Psi_\mu$ is Lipschitz continuous for the TV-distance. For instance, 
if 
\begin{equation}\label{eq:additionallowerbound}
\varepsilon= \inf_{(x,\mu)\in{\cal P}(\MM)}\delta_\mu(x)>0,
\end{equation} 
the rebirth part of the dynamics implies that
$$\kmu\ge\varepsilon \mu$$
In this case, the result is still true. Such a variation may allow to remove \ref{condhtrois} from the assumptions of \Cref{theo:discret}$(i)$. One way to definitely remove \ref{condhtrois} from \Cref{theo:discret}$(i)$ (without assuming  the additional assumption \eqref{eq:additionallowerbound} for instance) is to use a Poissonization argument like in \eqref{eq:poisssss} which allows to lower-bound the (Poissonized) dynamics by $\varepsilon \mu$ with very slight assumptions. For details in this direction, we refer to \cite[Lemma 6.3]{BCP}.
\end{Rq}

\begin{proof}
\textit{(a)} We prove $(a)$ and the well-definition of $Q_\mu f$ in the same time. By \ref{condpun}, 
\begin{equation*}
\forall x\in {\MM}, \quad \kmu^\ell(x,.)=\varepsilon \Psi(.)+(1-\varepsilon) R_\mu(x,.)
\end{equation*}
where 
$$R_\mu=\frac{\kmu^\ell-\varepsilon\Psi}{1-\varepsilon},$$
is a transition kernel. Using that for a probability $\alpha \in{\cal P}(\MM)$, $\alpha\Psi=\Psi$, it follows that for any $\alpha,\beta\in{\cal P}(\MM)$.
$$(\alpha-\beta) \kmu^\ell=(1-\varepsilon)(\alpha-\beta) R_\mu.$$
Then, an induction leads to
\begin{equation}\label{eq:RMUn}
(\alpha-\beta) \kmu^{n\ell}=(1-\varepsilon)^n (\alpha-\beta) R_\mu^n.
\end{equation}
and to
$$\|\alpha \kmu^{n\ell}-\beta \kmu^{n\ell}\|_{TV}\le (1-\varepsilon)^n\|\alpha-\beta\|_{TV},$$
for any $\alpha,\beta\in{\cal P}(\MM)$. In particular, 
$$\|\alpha \kmu^{n\ell}-\Pi_{\mu}\|_{TV}\le (1-\varepsilon)^n\|\alpha-\Pi_{\mu}\|_{TV}, $$
and for every $r\in\llbracket 0,\ell-1\rrbracket$,
$$\|\alpha \kmu^{n\ell+r}-\Pi_{\mu}\|_{TV}\le (1-\varepsilon)^n\|\alpha \kmu^r-\Pi_{\mu} \kmu^r\|_{TV} \le (1-\varepsilon)^n\|\alpha-\Pi_{\mu}\|_{TV}. $$
In the above inequality, we used that if $K$ is a transition kernel $\|\alpha K-\beta K\|_{TV}\le \|\alpha-\beta\|_{TV}$. These inequalities easily lead to $(a)$ and to the well-definition of $Q_\mu$. Checking that $({\rm Id}-K_\mu)Q_\mu f=Q_\mu ({\rm Id}-K_\mu) f$ classically implies that $Q_\mu f$ is the unique solution of the Poisson equation.\\

\noindent $(b)$ Let us  prove that $\mu \to Q_\mu f(x)$ is Lipschitz continuous. From the definition of $Q_\mu$ and from \eqref{eq:RMUn}, we have
\begin{align*}
 Q_\mu f(x) - Q_\nu f(x)
&=\sum_{n=0}^{+\infty} \left( (\kmu^n f(x) -\Pi_\mu \kmu^n (f)) - \left(\knu^n f(x) -\Pi_\nu \knu^n (f)\right)\right) \\
&= \sum_{n=0}^{+\infty} \sum_{j=0}^{\ell-1} \left( (\kmu^{n\ell+j} f(x) -\Pi_\mu \kmu^{n\ell+j} (f)) - (\knu^{n\ell+j} f(x) -\Pi_\nu \knu^{n\ell +j} (f)) \right) \\
&= \sum_{n=0}^{+\infty} (1- \varepsilon)^n \sum_{j=0}^{\ell-1} \left( (\delta_x - \Pi_\mu) R^n_\mu \kmu^{j} f - (\delta_x - \Pi_\nu) R^n_\nu \knu^{j} f  \right).
\end{align*}
Now, assume without loss of generality that $\|f\|_\infty\le 1$. In this case, one can check that
for every $n\ge0$, $j\in\llbracket 0,\ell-1\rrbracket$, \begin{align*}
\Big| (\delta_x - &\Pi_\mu) R^n_\mu \kmu^{j} f - (\delta_x - \Pi_\nu) R^n_\nu \knu^{j} f  \Big|\le 2  \sup_{\alpha}\|\alpha R_\mu^n \kmu^j -\alpha R_\nu^n\knu^j  \|_{TV}  +\|\Pi_\mu-\Pi_\nu\|_{TV}\\
&\le  2\sup_{\alpha}\|\alpha R_\mu^n-\alpha R_\nu^n\|_{TV}+2\sup_{\alpha} \|\alpha\kmu^j-\alpha\knu^j\|_{TV}+\|\Pi_\mu-\Pi_\nu\|_{TV}.
\end{align*}
By \ref{condpzero} and \Cref{lem:lip-prel}, one deduces that a constant $C$ exists such that
$$\Big| (\delta_x - \Pi_\mu) R^n_\mu \kmu^{j} f - (\delta_x - \Pi_\nu) R^n_\nu \knu^{j} f  \Big|\le C(1+n)\|\mu-\nu\|_{TV}.$$
Thus, 
$$| Q_\mu f(x) - Q_\nu f(x)|\lesssim \|\mu-\nu\|_{TV} \sum_{n=0}^{+\infty}(1- \varepsilon)^n (1+n)\lesssim\|\mu-\nu\|_{TV}.$$
This concludes the proof.

\end{proof}

\begin{lem}\label{lem:lip-prel}
Assume \ref{condpzero}, \ref{condpun} and \ref{condpdeux}. Then,
\begin{itemize}
\item[(i)] For all $n\in \mathbb{N}$,
$$\sup_{\alpha \in {\cal P}(\MM)} \Vert \alpha \kmu^{n} - \alpha \knu^{n} \Vert_{TV} \le c_2 n\|\mu-\nu\|_{TV}.$$
\item[(ii)] Setting
$R_\mu=(1-\varepsilon)^{-1}(\kmu^\ell-\varepsilon\Psi)$, we have
$$\sup_{\alpha\in{\cal P}(\MM)} \Vert \alpha R_\mu^{n}  - \alpha  R_\nu^n   \Vert_{TV}\le \frac{c_2 n\ell}{1-\varepsilon}\|\mu-\nu\|_{TV}.$$
\end{itemize}
\end{lem}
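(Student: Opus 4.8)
The plan is to prove both parts by the standard telescoping decomposition together with the fact that Markov kernels are $1$-Lipschitz (contractions) for the total variation norm. The only structural input beyond \ref{condpdeux} is that $\kmu$, $\knu$, $R_\mu$, $R_\nu$ are all genuine Markov kernels: for $\kmu$ this holds since $\kmu(x,\MM) = K_{\mu,\partial}{\bf 1}(x) + \delta_\mu(x) = 1$, and $R_\mu = (1-\varepsilon)^{-1}(\kmu^\ell - \varepsilon\Psi)$ is Markovian because it is nonnegative by \ref{condpun} and has total mass $(1-\varepsilon)^{-1}(1-\varepsilon) = 1$. Consequently, for any finite signed measure $\eta$ and any of these kernels $K$ one has $\|\eta K\|_{TV}\le\|\eta\|_{TV}$, and $\beta K\in{\cal P}(\MM)$ whenever $\beta\in{\cal P}(\MM)$.

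For (i), I would start from the telescoping identity, valid for $n\ge1$,
$$\kmu^n - \knu^n = \sum_{k=1}^n \kmu^{n-k}\,(\kmu - \knu)\,\knu^{k-1}.$$
Applying it to a probability $\alpha$ and writing $\beta_k := \alpha\kmu^{n-k}\in{\cal P}(\MM)$, each summand obeys
$$\big\|\beta_k(\kmu - \knu)\knu^{k-1}\big\|_{TV} \le \big\|\beta_k(\kmu - \knu)\big\|_{TV} \le \int \beta_k(dx)\,\big\|\kmu(x,\cdot) - \knu(x,\cdot)\big\|_{TV} \le c_2\|\mu - \nu\|_{TV},$$
using successively the TV-contraction of $\knu^{k-1}$, the triangle inequality for the mixture, and \ref{condpdeux}. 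Summing over $k=1,\dots,n$ gives $\|\alpha\kmu^n - \alpha\knu^n\|_{TV}\le c_2 n\|\mu-\nu\|_{TV}$, uniformly in $\alpha$; the case $n=0$ is trivial.

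For (ii), the key observation is that the term $\varepsilon\Psi$ cancels in the difference, so
$$R_\mu - R_\nu = \frac{1}{1-\varepsilon}\big(\kmu^\ell - \knu^\ell\big),$$
whence, for any probability $\beta$, part (i) applied with $n=\ell$ yields $\|\beta(R_\mu - R_\nu)\|_{TV}\le\frac{c_2\ell}{1-\varepsilon}\|\mu-\nu\|_{TV}$. I would then rerun the telescoping argument with $(R_\mu,R_\nu)$ in place of $(\kmu,\knu)$ — legitimate since $R_\mu,R_\nu$ are Markov kernels, so the intermediate measures remain probabilities and the tail kernels contract TV — obtaining
$$\|\alpha R_\mu^n - \alpha R_\nu^n\|_{TV} \le \sum_{k=1}^n \big\|\alpha R_\mu^{n-k}(R_\mu - R_\nu)R_\nu^{k-1}\big\|_{TV} \le n\,\frac{c_2\ell}{1-\varepsilon}\|\mu-\nu\|_{TV}$$
uniformly in $\alpha$, which is the desired estimate. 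There is no genuinely hard step here; the only things to watch are confirming that $\kmu$ and $R_\mu$ are Markovian (so the contraction and stability properties apply) and the cancellation of $\varepsilon\Psi$ that reduces (ii) to (i).
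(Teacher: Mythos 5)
Your proof is correct and takes essentially the same route as the paper's: your telescoping sum is just the unrolled form of the paper's induction, relying on the same ingredients (Markovianity of $\kmu$ and $R_\mu$, TV-contraction of kernels, \ref{condpdeux}, and the cancellation of $\varepsilon\Psi$ which reduces part (ii) to part (i) applied with $n=\ell$). The constants you obtain match the statement exactly, so nothing further is needed.
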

The proof of \cref{lem:lip-prel} is postponed to \cref{sec:proofsappendix}. In the proposition below, we obtain a result which will lead to the identification of the limit. Note that this result does not directly depend on the dynamics of our algorithm but only on the generic assumptions \ref{condpzero}, \ref{condpun} and \ref{condpdeux}. For the sake of simplicity, we use the notations of the algorithm but write \textit{ad hoc} assumptions (which are satisfied in this setting) under which the conclusion is true.





\begin{prop}\label{prop:asymptoticpseudotrajectory}
Assume \ref{condpzero}, \ref{condpun} and \ref{condpdeux}. Let $(X_n,\mu_n)_{n\ge1}$ denote a random sequence with values in $\MM\times{\cal P}(\MM)$ such that 
$\mu_n$  is $\sigma(X_k,k\le {n-1})$-measurable and  ${\cal L}(X_{n+1}|\sigma(X_k, k\le n))=\mathbb{K}_{\mu_n}(X_n,.)$. Then, if $(\gamma_n)_{n\ge1}$ denotes a non-increasing sequence of positive numbers satisfying \eqref{hyp:pas} and $\|\mu_{n+1}-\mu_n\|_{TV}\le C\gamma_{n+1}$ $a.s.$ where $C$ does not depend on $n$, 
then for any $T>0$, for any bounded measurable function $f:\MM\mapsto \ER$
$$\lim_{n\rightarrow+\infty}\sup_{m\in\llbracket n,N(n,T)\rrbracket }\left|\sum_{k=n}^{m} \gamma_k\left(f(\Zee_k)-\Pi_{\mu_k}(f)\right)\right|=0,$$
where 
$$N(n,T):=\inf\{m\ge n, \sum_{k=n}^{m+1}\gamma_k>T\}.$$
\end{prop}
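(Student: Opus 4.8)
The plan is to write $S_m = \sum_{k=n}^m \gamma_k(f(X_k) - \Pi_{\mu_k}(f))$ and decompose the summand through the Poisson equation \eqref{Poissonequa}: by \Cref{prop:poisequa}, $f - \Pi_{\mu_k}(f) = ({\rm Id} - \mathbb{K}_{\mu_k})Q_{\mu_k}f$, hence
$$ f(X_k) - \Pi_{\mu_k}(f) = Q_{\mu_k}f(X_k) - \mathbb{K}_{\mu_k}Q_{\mu_k}f(X_k). $$
Now insert the conditional expectation: since ${\cal L}(X_{k+1}|{\cal F}_k) = \mathbb{K}_{\mu_k}(X_k,\cdot)$ and $\mu_k$ is ${\cal F}_{k-1}$-measurable, $\mathbb{K}_{\mu_k}Q_{\mu_k}f(X_k) = \ES[Q_{\mu_k}f(X_{k+1})|{\cal F}_k]$. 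Write $\mathbb{K}_{\mu_k}Q_{\mu_k}f(X_k) = Q_{\mu_k}f(X_{k+1}) - \Delta M_{k+1}$ where $\Delta M_{k+1} := Q_{\mu_k}f(X_{k+1}) - \ES[Q_{\mu_k}f(X_{k+1})|{\cal F}_k]$ is a martingale increment. This gives the classical Abel/telescoping splitting
$$ \gamma_k\big(f(X_k) - \Pi_{\mu_k}(f)\big) = \gamma_k\big(Q_{\mu_k}f(X_k) - Q_{\mu_k}f(X_{k+1})\big) + \gamma_k\,\Delta M_{k+1}. $$
The first piece, after summing, telescopes up to ``discrete integration by parts'' error terms of two types: boundary terms $\gamma_n Q_{\mu_n}f(X_n)$ and $\gamma_{m+1}Q_{\mu_{m+1}}f(X_{m+1})$, which are $O(\gamma_n)$ uniformly in $m$ since $\|Q_\mu f\|_\infty \lesssim \|f\|_\infty$ by \Cref{prop:poisequa}(a) (applied with $\alpha = \delta_x$, using $\|\delta_x - \Pi_\mu\|_{TV}\le 1$); a step-variation term $\sum_k (\gamma_k - \gamma_{k+1})Q_{\mu_k}f(X_{k+1})$, bounded by $\|Q f\|_\infty\sum_k(\gamma_k-\gamma_{k+1}) = \|Qf\|_\infty\,\gamma_n$ by monotonicity of $(\gamma_n)$; and a kernel-variation term $\sum_k \gamma_{k+1}(Q_{\mu_{k+1}}f - Q_{\mu_k}f)(X_{k+1})$, bounded using \Cref{prop:poisequa}(b) by $C\|f\|_\infty\sum_k \gamma_{k+1}\|\mu_{k+1}-\mu_k\|_{TV} \le C'\|f\|_\infty\sum_k \gamma_{k+1}^2$. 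On the window $[n,N(n,T)]$ this last sum is $\le C'\|f\|_\infty\,\gamma_n \sum_{k=n}^{N(n,T)}\gamma_{k+1} \le C'\|f\|_\infty\,\gamma_n(T+\gamma_n)$, again $O(\gamma_n)$. All of these vanish as $n\to\infty$ since $\gamma_n\to 0$.

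It remains to control the martingale part $\sup_{m\in\llbracket n,N(n,T)\rrbracket}\big|\sum_{k=n}^m \gamma_k \Delta M_{k+1}\big|$. I would fix $\varepsilon>0$ and estimate, via Doob's maximal inequality applied to the martingale $m\mapsto \sum_{k=n}^{m}\gamma_k\Delta M_{k+1}$ (with the stopping rule built into $N(n,T)$), the probability
$$ \PE\Big(\sup_{n\le m\le N(n,T)}\Big|\sum_{k=n}^m \gamma_k\Delta M_{k+1}\Big| > \varepsilon\Big) \le \frac{1}{\varepsilon^2}\,\ES\Big[\sum_{k=n}^{N(n,T)}\gamma_k^2(\Delta M_{k+1})^2\Big] \le \frac{4\|Qf\|_\infty^2}{\varepsilon^2}\,\gamma_n\sum_{k=n}^{N(n,T)}\gamma_{k+1}\le \frac{C}{\varepsilon^2}\,\gamma_n(T+\gamma_n), $$
using $|\Delta M_{k+1}|\le 2\|Q_{\mu_k}f\|_\infty\lesssim\|f\|_\infty$ and $\gamma_k^2\le \gamma_n\gamma_k$ on the window. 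Since the right-hand side tends to $0$, this shows convergence to $0$ in probability. To upgrade to almost sure convergence one runs the same estimate along the successive time-blocks $[n_p, n_{p+1}]$ defined by $\sum_{k=n_p}^{n_{p+1}}\gamma_k \approx T$ (a partition of $\N$ into windows of ode-time length $\sim T$), so that $N(n_p,T)\le n_{p+1}$; the per-block bound is then $O(\gamma_{n_p})$, and one invokes the $\gamma_n = o(1/\log n)$ hypothesis from \eqref{hyp:pas}: since the number of blocks up to time $N$ grows at most logarithmically, $\sum_p \gamma_{n_p}^{\,}$-type tail sums of these probabilities are summable (after, if needed, raising the martingale increments to a higher even power and using an $L^p$ maximal inequality to make the $\log$ factor harmless), whence Borel–Cantelli gives the a.s. statement. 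The same blocking lets one pass from a fixed starting time $n$ to the supremum over all $m\ge n$ in a single window.

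The main obstacle is precisely this last upgrade from convergence in probability to the almost sure, uniform-over-the-window statement: one must choose the right exponent in the maximal inequality so that the resulting series of exceedance probabilities is summable given only $\gamma_n = o(1/\log n)$ (rather than, say, $\sum\gamma_n^2<\infty$), which is exactly the role played by the logarithmic condition in \eqref{hyp:pas}; the bounded-function hypothesis on $f$ and the uniform-in-$\mu$ bounds $\|Q_\mu f\|_\infty\lesssim\|f\|_\infty$ and the Lipschitz estimate $\|Q_\mu f - Q_\nu f\|_\infty\lesssim\|\mu-\nu\|_{TV}$ of \Cref{prop:poisequa} are what make every other error term manifestly $O(\gamma_n)$ and hence harmless. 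This is the standard ``averaging along a pseudo-trajectory'' scheme of \cite{BCP}, the only genuinely new ingredient being that $Q_{\mu_k}$ now varies with $k$ through the nonlinear dependence on $\mu_k$, which is absorbed by \Cref{prop:poisequa}(b) together with $\|\mu_{k+1}-\mu_k\|_{TV}\le C\gamma_{k+1}$.
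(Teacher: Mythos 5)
Your decomposition is the same as the paper's: solve the Poisson equation, write $\gamma_k\bigl(f(X_k)-\Pi_{\mu_k}(f)\bigr)$ as a weighted martingale increment $\gamma_k\Delta M_k$ with $\Delta M_{k+1}=Q_{\mu_k}f(X_{k+1})-\mathbb{K}_{\mu_k}Q_{\mu_k}f(X_k)$, plus a telescoping/boundary remainder of order $\gamma_n$ (using monotonicity of $(\gamma_n)$ and the uniform bound of \Cref{prop:poisequa}(a)), plus a kernel-variation term controlled by \Cref{prop:poisequa}(b) together with $\|\mu_{k+1}-\mu_k\|_{TV}\le C\gamma_{k+1}$. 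All of that matches the paper and is correct.

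The gap is in the martingale term. Your Doob $L^2$ estimate only yields convergence in probability, and the proposed upgrade to the almost sure statement — blocking in ode-time and, ``if needed, raising the martingale increments to a higher even power'' — cannot work under the sole hypothesis \eqref{hyp:pas}. The hypothesis allows, e.g., $\gamma_n\asymp(\log n)^{-2}$ (nonincreasing, $H_n\to\infty$, $\gamma_n\log n\to0$). For such steps the block starts $n_p$ (ode-length $T$ apart) satisfy $\gamma_{n_p}\asymp(\log p)^{-2}$, and for \emph{any} fixed $q$ a BDG/Rosenthal bound gives a per-block exceedance probability of order $\gamma_{n_p}^{q}\asymp(\log p)^{-2q}$, which is never summable; moreover the number of blocks is not logarithmic in the index except in the special case $\gamma_n\sim c/n$. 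So no fixed-moment maximal inequality closes the Borel--Cantelli argument here. What is needed — and what the paper does — is an exponential (Azuma--Hoeffding type) maximal inequality, available because $|\gamma_k\Delta M_k|\le C\gamma_k$ and $\sum_{k=n}^{N(n,T)}\gamma_k^2\le\gamma_n(T+\gamma_n)$: adapting \cite[Proposition 4.4]{B99} one gets
\begin{equation*}
\PE\Bigl(\max_{m\in\llbracket n,N(n,T)\rrbracket}\Bigl|\sum_{k=n}^{m}\gamma_k\Delta M_k\Bigr|\ge\varepsilon\Bigr)\le C_1\exp\bigl(-C_2\varepsilon^2\gamma_n^{-1}\bigr)=C_1\,n^{-C_2\varepsilon^2/(\gamma_n\log n)},
\end{equation*}
and since $\gamma_n\log n\to0$ the right-hand side is eventually $\le n^{-2}$, hence summable over all $n$ (no blocking needed), giving the a.s.\ uniform statement by Borel--Cantelli. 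This is precisely where the $o(1/\log n)$ condition is designed to enter; your $L^2$ route would suffice only under the stronger assumption $\sum_n\gamma_n^2<+\infty$ (then $\sum_k\gamma_k\Delta M_k$ converges a.s.), which is not part of the hypotheses of this proposition.
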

\begin{proof} Without loss of generality, we assume that $\|f\|_\infty\le 1$.
By \eqref{Poissonequa},
\begin{align*}
f(\Zee_{n+1})-\Pi_{\mu_n} (f)&=Q_{\mu_n} f(\Zee_{n+1})-\mathbb{K}_{\mu_n} Q_{\mu_n} f(\Zee_{n+1})\\
&=\gamma_{n+1}\Delta M_{n+1}(f)+\Delta R_{n+1}(f)+\gamma_{n+1}\Delta D_{n+1}(f),
\end{align*}
with,
\begin{align*}
&\Delta M_{n+1}(f)= Q_{\mu_n} f(\Zee_{n+1})-\mathbb{K}_{\mu_n}Q_{\mu_n} f(\Zee_{n})\\
&\Delta R_{n+1}(f)=(\gamma_{n+1}-\gamma_n)\mathbb{K}_{\mu_n}Q_{\mu_n} f(\Zee_{n})+ \left(\gamma_n \mathbb{K}_{\mu_n}Q_{\mu_n} f(\Zee_{n})-\gamma_{n+1} \mathbb{K}_{\mu_{n+1}}Q_{\mu_{n+1}} f(\Zee_{n+1})\right)\\
&\Delta D_{n+1}(f)= \left(\mathbb{K}_{\mu_{n+1}}Q_{\mu_{n+1}} f(\Zee_{n+1})-\mathbb{K}_{\mu_n} Q_{\mu_n} f(\Zee_{n+1})\right).
\end{align*}
First, let us focus on $\Delta R_{n+1}$.  Using for the first part that $(\gamma_n)_{n\ge0}$ is non-increasing and that $(x,\mu)\mapsto \mathbb{K}_\mu Q_\mu f(x)$ is (uniformly) bounded (by \Cref{prop:poisequa}), and a telescoping argument for the second part yields the existence of a constant $C$ such that for any positive integer $m$:
 $$|\sum_{k=n}^{m} \Delta R_{k}(f)|\le C\gamma_n.$$
 Second $(\Delta M_n)$ is a sequence of $({\cal F}_n)$-martingale increments.
 From \Cref{prop:poisequa}, $\Delta M_n(f)$ is bounded. As a consequence, by an adaptation of \cite[Proposition 4.4]{B99} (based on exponential martingales), 
 one is able to obtain that for every  $T>0$, some positive constants $C_1$ and $C_2$ exist such that
 $$\PE\left( \max_{m\in\llbracket n,N(n,T)\rrbracket }\left|\sum_{k=n}^{m}\gamma_k \Delta M_k(f)\right|\ge \varepsilon\right)\le C_1\exp\left(-C_2\gamma_{n}^{-1}\varepsilon^2\right),$$
and thus, from the Borel-Cantelli lemma and the assumption $\lim_{n\rightarrow+\infty}\gamma_n\log(n)=0$, 
that for every $T>0$
 $$\limsup_{n\rightarrow+\infty} \max_{m\in\llbracket n,N(n,T)\rrbracket }\left|\sum_{k=n}^{m}\gamma_k \Delta M_k(f)\right| =0\quad a.s.$$
Finally, for the last term, one uses that $\mu\mapsto \mathbb{K}_\mu f$ and $\mu\mapsto Q_\mu f$  are Lipschitz continuous.
More precisely, using Lemma \ref{prop:poisequa} \textit{(i)}, \textit{(iii)} and the last inequality of Lemma \ref{lem:lip-prel}, we see that there exists $C>0$ such that
\begin{align*}
\Big|&\mathbb{K}_{\mu_{n+1}} Q_{\mu_{n+1}} f(\Zee_{n+1})-\mathbb{K}_{\mu_n} Q_{\mu_n} f(\Zee_{n+1}) \Big| \\
&\leq  \left|\mathbb{K}_{\mu_{n+1}}Q_{\mu_{n+1}} f(\Zee_{n+1})-\mathbb{K}_{\mu_{n+1}} Q_{\mu_n} f(\Zee_{n+1}) \right|+ \left|\mathbb{K}_{\mu_{n+1}}Q_{\mu_{n}} f(\Zee_{n+1})-\mathbb{K}_{\mu_n} Q_{\mu_n} f(\Zee_{n+1})\right| \\
  &\leq  \Vert Q_{\mu_{n+1}} f- Q_{\mu_n} f \|_\infty + \left\|(\mathbb{K}_{\mu_{n+1}}-\mathbb{K}_{\mu_n}) (Q_{\mu_n} f)\right\|_\infty \\
&\leq C \Vert f \Vert_\infty \Vert \mu_{n+1} - \mu_n \Vert_{TV} \leq C \gamma_{n+1},
\end{align*}
This ends the proof (since $\sum_{k=n+1}^{N(n,T)} |\Delta D_{k}(f)|\le C\sum_{k=n+1}^{N(n,T)}\gamma_k\le  C T$).

\end{proof}

\section{Long-time behavior of the limiting dynamics}\label{sec:proof2}
The aim of this section is to provide long-time properties of the limiting ODE
\begin{equation}\label{eq:nutdyn}
\dot{\nu}=-\nu+\Pi_\nu.
\end{equation}

To this end, we use  that this equation can be connected to an {``almost'' linear differential} equation through the following property (see \cref{rq:almostlinear} for details). 
 
\begin{lem}\label{lem:changevariable} Assume \ref{condhzero}, \ref{condhun} and \ref{condhdeux}. 
\begin{itemize} 

\item[(i)] For any $\mu_0\in{\cal P}(\MM)$, the differential equation
\begin{equation}\label{eq:linearizedmut}
\dot{\mu}_t=\mu_t {\cal A}_{\mu_t}-(\mu_t {\cal A}_{\mu_t}{\bf 1})\mu_t.
\end{equation}
admits a unique solution in ${\cal P}(\MM)$.
Furthermore, 
\begin{equation}\label{def:mut45}
\forall t\ge0,\quad \mu_t=\frac{\nu_0\varphi_t}{\nu_0\varphi_t{\bf 1}},
\end{equation}
where $(\varphi_t)_{t\ge0}$ is the unique solution on $({\cal B}(\MM),\|.\|_\infty)$, space of bounded linear operators on ${\cal C}(\MM,\ER_+)$ starting from ${\rm Id}$ to 
\begin{equation}\label{eq:almostlinear}
\dot{\varphi}=\varphi_{t} {\cal A}_{\mu_t}, \quad t\ge0.
\end{equation}
\item[(ii)] For every $\nu_0\in{\cal P}(\MM)$, the differential equation \eqref{eq:nutdyn} admits a unique solution $(\nu_t)_{t\ge0}$ in ${\cal P}(\MM)$, starting from $\nu_0$. Furthermore, 
if $(\mu_t)_{t\ge0}$ is the solution of \eqref{eq:linearizedmut}, then $\nu_t:=\mu_{\tau(t)}$ where $\tau$ is a ${\cal C}^1$-diffeomorphism from $\ER_+$ to $\ER_+$ being the unique solution of 
\begin{equation}\label{eq:tauu}
 \tau'(t)=\frac{1}{\mu_{\tau(t)} {\cal A}_{\mu_{\tau(t)}}{\bf 1}}\quad \textnormal{with $\tau(0)=0$}.
 \end{equation}

\end{itemize}
\end{lem}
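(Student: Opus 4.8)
The plan is to treat the two items in the natural order, first establishing the linear representation for the auxiliary flow $(\mu_t)$ of \eqref{eq:linearizedmut}, then deriving \eqref{eq:nutdyn} from it by a time change. For $(i)$, I would start with the linear equation \eqref{eq:almostlinear}: since $\mu\mapsto{\cal A}_\mu$ is bounded on $\MM\times{\cal P}(\MM)$ and Lipschitz for the TV-norm (\Cref{lem:basics}$(i)$ and \Cref{lem:pimulip}), a Picard/Cauchy--Lipschitz argument in the Banach space $({\cal B}(\MM),\|\cdot\|_\infty)$ gives a unique solution $(\varphi_t)$ — but because the equation is coupled to $\mu_t$, which is itself defined through $\varphi_t$ by \eqref{def:mut45}, one must solve the coupled system $(\varphi_t,\mu_t)$ simultaneously via a fixed point on ${\cal C}([0,T],{\cal B}(\MM))\times{\cal C}([0,T],{\cal P}(\MM))$. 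Here the key estimates are: $\nu_0\varphi_t{\bf 1}\ge c>0$ on $[0,T]$ (so that the normalization in \eqref{def:mut45} is well-defined and smooth), which follows because $\|\varphi_t-{\rm Id}\|_\infty$ is small for small $t$ and $\varphi_t$ stays bounded below; positivity/preservation of ${\cal P}(\MM)$ for $\mu_t$; and the Lipschitz dependence $\mu\mapsto\mu{\cal A}_\mu$. Once the coupled system is solved, one checks by direct differentiation of $\mu_t=\nu_0\varphi_t/(\nu_0\varphi_t{\bf 1})$ that $\mu_t$ solves \eqref{eq:linearizedmut}: writing $N_t=\nu_0\varphi_t$, we have $\dot N_t=N_t{\cal A}_{\mu_t}$, hence $\dot\mu_t=\dot N_t/(N_t{\bf 1})-N_t(\dot N_t{\bf 1})/(N_t{\bf 1})^2=\mu_t{\cal A}_{\mu_t}-(\mu_t{\cal A}_{\mu_t}{\bf 1})\mu_t$. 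Uniqueness for \eqref{eq:linearizedmut} itself follows either from uniqueness of the coupled linear system or from a Gr\"onwall argument using that the right-hand side of \eqref{eq:linearizedmut} is locally Lipschitz in $\mu$ for the TV-norm.

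For $(ii)$, I would define $\tau$ as the solution of the scalar ODE \eqref{eq:tauu}. Since $t\mapsto\mu_t{\cal A}_{\mu_t}{\bf 1}$ is continuous and bounded in $[1,\ell/(1-\rho)]$ (the lower bound $\ge1$ because ${\cal A}_\mu={\rm Id}+\sum_{n\ge1}K_{\mu,\partial}^n\ge{\rm Id}$, the upper bound from \Cref{lem:basics}$(i)$), the right-hand side $1/(\mu_{\tau}{\cal A}_{\mu_\tau}{\bf 1})$ is continuous in $\tau$, bounded between $(1-\rho)/\ell$ and $1$, so $\tau$ is globally defined, strictly increasing, $\tau'\in[(1-\rho)/\ell,1]$, hence $\tau$ is a ${\cal C}^1$-diffeomorphism of $\ER_+$ (it is proper because $\tau(t)\to\infty$ as $t\to\infty$, since $\tau'$ is bounded below). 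Then set $\nu_t=\mu_{\tau(t)}$ and compute $\dot\nu_t=\tau'(t)\dot\mu_{\tau(t)}=\frac{1}{\mu_{\tau(t)}{\cal A}_{\mu_{\tau(t)}}{\bf 1}}\big(\mu_{\tau(t)}{\cal A}_{\mu_{\tau(t)}}-(\mu_{\tau(t)}{\cal A}_{\mu_{\tau(t)}}{\bf 1})\mu_{\tau(t)}\big)=\Pi_{\nu_t}-\nu_t$, using the explicit form $\Pi_\mu=\mu{\cal A}_\mu/(\mu{\cal A}_\mu{\bf 1})$ from \eqref{explicitpimu}. This shows $(\nu_t)$ solves \eqref{eq:nutdyn}. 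Existence is thus established; for uniqueness of \eqref{eq:nutdyn} one notes that $\mu\mapsto\Pi_\mu$ is Lipschitz for the TV-norm (\Cref{lem:pimulip}), so $F(\nu)=-\nu+\Pi_\nu$ is Lipschitz and Cauchy--Lipschitz applies (one must also check ${\cal P}(\MM)$ is preserved, which follows since $-\nu+\Pi_\nu$ points ``inward'': $\frac{d}{dt}\nu_t(\MM)=0$ and positivity is preserved because when a coordinate/test mass hits $0$ the drift is $\ge0$ there, $\Pi_{\nu_t}$ being a probability).

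I expect the main obstacle to be the first step of $(i)$: setting up the coupled fixed-point argument for $(\varphi_t,\mu_t)$ cleanly, in particular maintaining the uniform lower bound $\nu_0\varphi_t{\bf 1}\ge c>0$ on a time interval whose length does not degenerate, so that the normalization map $N\mapsto N/(N{\bf 1})$ is Lipschitz on the relevant set and the fixed-point iteration closes. Everything after that — the differentiations producing \eqref{eq:linearizedmut} and \eqref{eq:nutdyn}, the properties of the time change $\tau$, and the uniqueness statements — is routine given the boundedness and Lipschitz estimates from \Cref{lem:basics} and \Cref{lem:pimulip}, together with the a priori two-sided bound $1\le\mu{\cal A}_\mu{\bf 1}\le\ell/(1-\rho)$.
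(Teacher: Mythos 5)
Your part (ii) is essentially the paper's argument (time change defined by \eqref{eq:tauu}, the bounds $1\le\mu{\cal A}_\mu{\bf 1}\le \ell/(1-\rho)$ giving $\tau'$ bounded away from $0$ and $1$, chain rule, and uniqueness of \eqref{eq:nutdyn} from the Lipschitz property of $\mu\mapsto\Pi_\mu$). For part (i), however, you take a genuinely different route to existence. The paper does not solve a coupled fixed point for $(\varphi_t,\mu_t)$: it first extends the vector field $\mu\mapsto\mu{\cal A}_\mu-(\mu{\cal A}_\mu{\bf 1})\mu$ to the Banach space ${\cal M}_s(\MM)$ of signed measures by composing with a Lipschitz retraction $p:{\cal M}_s(\MM)\to{\cal P}(\MM)$ with $p=\mathrm{Id}$ on ${\cal P}(\MM)$, obtains global existence and uniqueness there by Cauchy--Lipschitz (using \Cref{lem:pimulip}), and only then introduces the operator equation $\dot\varphi=\varphi{\cal A}_{p(\mu_t)}$ driven by the already-constructed $(\mu_t)$: the normalized quantity $\rho_t=\mu_0\varphi_t/(\mu_0\varphi_t{\bf 1})$ solves the same time-inhomogeneous ODE as $\mu_t$, so $\rho_t=\mu_t$ by uniqueness, which simultaneously proves that the solution stays in ${\cal P}(\MM)$ and yields the representation \eqref{def:mut45}. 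The paper's extension trick buys a clean, one-shot Cauchy--Lipschitz argument and sidesteps the contraction estimates for the normalization map; your direct fixed-point construction buys a proof that never leaves ${\cal P}(\MM)$, at the cost of having to set up the coupled iteration. Both rest on the same two estimates, \eqref{eq:amutordi} and \Cref{lem:pimulip}.

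Two points in your write-up need to be made explicit for the argument to close. First, in (i) you must justify that $\nu_0\varphi_t$ is a \emph{nonnegative} measure, otherwise the normalization in \eqref{def:mut45} need not produce an element of ${\cal P}(\MM)$; this follows from the positivity of the operators ${\cal A}_\mu$ together with the series expansion (or Picard iteration) for $\varphi_t$, and it also gives $\frac{d}{dt}(\nu_0\varphi_t{\bf 1})\ge0$, hence $\nu_0\varphi_t{\bf 1}\ge1$ for all $t$ — so the uniform lower bound you were worried about holds globally and no small-time argument is needed. Second, in (ii) continuity and boundedness of $s\mapsto(\mu_s{\cal A}_{\mu_s}{\bf 1})^{-1}$ give existence of $\tau$ but not the \emph{uniqueness} asserted in the statement; you should add that this map is Lipschitz, which follows because $s\mapsto\mu_s$ is Lipschitz in total variation (the right-hand side of \eqref{eq:linearizedmut} is bounded by \eqref{eq:amutordi}), $\mu\mapsto{\cal A}_\mu{\bf 1}$ is Lipschitz by \Cref{lem:pimulip}, and $\mu{\cal A}_\mu{\bf 1}\ge1$. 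With these two additions your proposal is a complete and correct proof.
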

\begin{Rq}\label{rq:almostlinear}  For a given $(\mu_t)_{t\ge0}$, the equation \eqref{eq:almostlinear} is an inhomogeneous linear equation. This will allow us to provide a series expansion of its solution (see \eqref{eq:semiexplicit}) and thus to $(\mu_t)_{t\ge0}$ by \eqref{def:mut45} (which depends on $(\mu_t)_{t\ge0}$ itself !). This is why we used the terminology ``almost linear''.
 \end{Rq}
\begin{proof} 
\noindent $(i)$ 
To prove existence and uniqueness of the solution, we apply the Cauchy-Lipschitz theorem on ${\cal M}_s(\MM)$ which denotes the space of finite signed measures on $\MM$ equipped with the total variation norm $\| .\|_{TV}$. It is well-known that ${\cal M}_s(\MM)$ is a Banach space. Since the operator ${\cal A}_\mu$ is not defined on ${\cal M}_s(\MM)\backslash {\cal P}(\MM)$, we extend the ODE on  ${\cal M}_s(\MM)$  by considering
\begin{equation}\label{eq:extendedode}
\dot{\mu}=\mu {\cal A}_{p(\mu)}-(\mu {\cal A}_{p(\mu)}{\bf 1})\mu,
\end{equation}
where $p$ is a Lipschitz continuous function from ${\cal M}_s(\MM)$ to ${\cal P}(\MM)$ which satisfies $p(\mu)=\mu$ on ${\cal P}(\MM)$. For instance, since $\mu\mapsto \mu_+$ is $1$-Lipschitz for $\|.\|_{TV}$, one easily checks that for any $\mu_0\in {\cal P}(\MM)$
$$ p(\mu)=\frac{\mu_+}{\|\mu_+\|_{TV}\vee 1}+(1-\|\mu_+\|_{TV})_{+}\mu_0,$$
has the required properties. Now, by \Cref{lem:pimulip}, it is easily seen that
\begin{equation}\label{eq:linear333}
\mu\mapsto \mu {\cal A}_{p(\mu)}-(\mu {\cal A}_{p(\mu)}{\bf 1})\mu,
\end{equation}
is Lipschitz continuous for $\|.\|_{TV}$ on ${\cal M}_s(\MM)$ so that the Cauchy-Lipschitz theorem ensures existence and uniqueness of solutions on ${\cal M}_s(\MM)$ of \eqref{eq:extendedode}. However, at this stage, we do not know if a solution starting from an initial condition in ${\cal P}(\MM)$ remains in ${\cal P}(\MM)$. 

\noindent To this end, let us consider the ODE
$$
\dot{\varphi}=\varphi_{t} {\cal A}_{p(\mu_t)}, \quad t\ge0,
$$
in $({\cal B}(\MM),\|.\|_\infty)$ (which is a Banach space). The function $\varphi\mapsto \varphi {\cal A}_{p(\mu_t)}$ is Lipschitz with respect to $\|.\|_\infty$, uniformly in $t$ since by \cref{lem:pimulip},
\begin{equation}\label{eq:amutordi}
\sup_{\mu\in{\cal P}(\MM)} \|{\cal A}_\mu\|_\infty=\sup_{\mu\in{\cal P}(\MM)}\sup_{ \|f\|_\infty\le 1} \|{\cal A}_\mu f\|_\infty=\sup_{\mu\in{\cal P}(\MM)} \|{\cal A}_\mu {\bf 1}\|_\infty<+\infty.
\end{equation}
Thus, the equation admits a unique solution $(\varphi_t)_{t\ge0}$ starting from ${\rm Id}$.  For a given $\mu_0\in{\cal P}(\MM)$, set
\begin{equation}\label{eq:rhotratio}
\rho_t:=\frac{\mu_0\varphi_t}{\mu_0\varphi_t{\bf 1}},
\end{equation}
Since $\mu_0\varphi_t{\bf 1}\ge 1$ for every $t\ge0$, $\rho_t$ is well-defined, differentiable on $\ER_+$ and
\begin{equation}\label{eq:rhotlinear}
\dot{\rho}_t=\frac{\mu_0\varphi_t{\cal A}_{p(\mu_t)}}{\mu_0\varphi_t{\bf 1}}-\frac{(\mu_0\varphi_t{\cal A}_{p(\mu_t)}{\bf 1})\mu_0\varphi_t}{(\mu_0\varphi_t{\bf 1})^2}=\rho_t {\cal A}_{p(\mu_t)}-(\rho_t {\cal A}_{p(\mu_t)}{\bf 1})\rho_t.
\end{equation}
The map $\rho\mapsto \rho {\cal A}_{p(\mu_t)}-(\rho {\cal A}_{p(\mu_t)}{\bf 1})\rho$ being uniformly (in $t$) Lipschitz  for the TV-norm, $(\rho_t)_{t\ge0}$ is thus the unique solution (starting from $\mu_0$) of the (linear) ODE $\dot{\rho}_t=\rho_t {\cal A}_{p(\mu_t)}-(\rho_t {\cal A}_{p(\mu_t)}{\bf 1})\rho_t$. But, by construction, the solution $(\mu_t)_{t\ge0}$ of 
the non linear equation \eqref{eq:linear333} is also a solution of the linear ODE $\dot{\rho}_t=\rho_t {\cal A}_{p(\mu_t)}-(\rho_t {\cal A}_{p(\mu_t)}{\bf 1})\rho_t$. Thus, $\rho_t=\mu_t$ and this implies that $(\mu_t)_{t\ge0}$ lives in ${\cal P}(\MM)$ since $(\rho_t)_{t\ge0}$ does (by its very definition \eqref{eq:rhotratio}). As a consequence, $p(\mu_t)=\mu_t$ for all $t\ge0$ and this concludes the proof of this statement.\\

$(ii)$ Since $\mu\mapsto -\mu+\Pi_\mu$ is Lipschitz for the TV-distance (by \cref{lem:pimulip}), uniqueness follows from a classical Gronwall argument. For the existence, we consider 
$\nu_t=\mu_{\tau(t)}$ where $(\mu_t)$ is a solution of \eqref{def:mut45} starting from $\nu_0$ and $\tau$ is a solution of \eqref{eq:tauu}. Before going further, let us check that  \eqref{eq:tauu} has a solution. To this end, we remark that the map 
$\tau\mapsto \mu_{\tau} {\cal A}_{\mu_{\tau}}{\bf 1}$ is Lipschitz since
 $\mu\mapsto {\cal A}_{\mu} {\bf 1}$ is Lipschitz (by \cref{lem:basics}) and $\tau \mapsto \mu_\tau$ also is for the TV-distance. Actually, by \eqref{eq:linearizedmut},
 $$ \forall 0\le \tau_1\le \tau_2, \quad \|\mu_{\tau_2}-\mu_{\tau_1}\|_{TV}\le 2(\sup_{\mu\in{\cal P}(\MM)}\mu {\cal A}_\mu {\bf 1}) (\tau_2-\tau_1)\le C (\tau_2-\tau_1),$$
 by \eqref{eq:amutordi}.  Thus, $\tau\mapsto( \mu_{\tau} {\cal A}_{\mu_{\tau}}{\bf 1})^{-1}$ is also Lipschitz since $\mu{\cal A}_\mu {\bf 1}\ge 1$. As a consequence, \eqref{eq:tauu} has a  unique solution and by construction, 
 $$\forall t\ge0, \quad \dot{\nu}_t=\tau'(t)\dot{\mu}_{\tau(t)}=\frac{\mu_{\tau(t)} {\cal A}_{\mu_{\tau(t)}}}{\mu_{\tau(t)} {\cal A}_{\mu_{\tau(t)}}{\bf 1}}-\mu_{\tau(t)}=\Pi_{\nu_t}-\nu_t.$$
Finally, since ${\cal A}_\mu 1\ge 1$ and $c=\sup_{\mu} \|{\cal A}_\mu {\tcr{\bf 1}}\|_\infty<+\infty$, 
\begin{equation}\label{eq:cundiffeo}
\forall t\ge 0, \quad, c^{-1}\le \tau'(t)\le 1,
\end{equation}
 which in turn implies the ${\cal C}^1$-diffeomorphism property.

\end{proof}
The aim is now to be able to study the asymptotic behavior of $(\mu_t)_{t\ge0}$ defined by \eqref{def:mut45} and to deduce some convergence properties for $(\nu_t)_{t\ge0}$ by time change (given by $\tau$ given by \eqref{eq:tauu}). To this end, we introduce a general continuous function $\alpha=(\alpha_t)_{t\ge0}$  with values in ${\cal P}(\MM)$ and consider the family of operators $(R^{\alpha}_{t,s})_{s\le t}$
\begin{equation}
    R^{\alpha}_{t,s}f:=\frac{\KK^{\alpha}_{t,s}(fg^{\alpha}_{s})}{g^{\alpha}_{t}}
\end{equation}
where $\KK^{\alpha}_{t,s}$ is the unique solution\footnote{By  \eqref{eq:amutordi}, which holds under \ref{condhzero} and \ref{condhun}, existence and uniqueness hold.} at time $t$ starting from $\bar{\varphi}={\rm Id}$ at time $s$ 
 to 
\begin{equation}\label{ode:kts}
\begin{cases}
\dot{\varphi}_v=\varphi_v {\cal A}_{\alpha_v},& v \in[s,t]\\
\varphi_s=\bar{\varphi},
\end{cases}
\end{equation}
and 
$$ g^\alpha_t= \KK^{\alpha}_{t,0} {\bf 1}.$$ 
A crucial point for the sequel is the cocycle property for  $(\KK^{\alpha}_{t,s})_{s<t}$:  $$\forall\; 0\le  u\le s\le t,\quad K_{t,u}=K_{t,s}\circ K_{s,u}.$$
This property is a direct consequence of existence and uniqueness of solutions to the ODE \eqref{ode:kts}. This in turn implies that $(R^{\alpha}_{t,s})_{s<t}$ also has the cocycle property:
\begin{equation}\label{eq:cocycle}
\forall\; 0\le  u\le s\le t,\quad R_{t,u}=R_{t,s}\circ R_{s,u}.
\end{equation}
Also note at this stage that  $(\mu_t)_{t\ge0}$ defined by \eqref{def:mut45} and  $(\KK^{\alpha}_{t,s})_{s\le t}$ are connected by the following relation:
\begin{equation}\label{eq:apreschgttemps}
    \mu_t= \frac{\mu_0 \KK^{\mu_{\bullet}}_t}{\mu_0 \KK^{\mu_{\bullet}}_t {\bf 1}}= R_{t}^{\mu_{\bullet}}\frac{ g^{\mu_{\bullet}}_t}{\mu_0 g^{\mu_{\bullet}}_t},
\end{equation}
where $\KK_t^\alpha=\KK_{t,0}^\alpha$ and $R_t^\alpha=R_{t,0}^\alpha$. In the following, we thus introduce for a given $\alpha:=(\alpha_t)_{t\ge0}$ the map $\mu\mapsto  \Phi^{\alpha}_{t}(\mu)$ defined by
\begin{equation}
    \Phi^{\alpha}_{t}(\mu):=\frac{\mu \KK^{\alpha}_{t}}{\mu \KK^{\alpha}_{t}{\bf 1}}.
\end{equation}
%

%
%

We have the following result:
\begin{prop}\label{prop:condsignalphaI} Assume \ref{condhzero} and \ref{condhun} and that,
\begin{equation}\label{eq:conditioncontractralphaaa}
\|\delta_{x}R^{\alpha}_{t}-\delta_{y}R^{\alpha}_{t}\|_{\TV}\overset{t\to+\infty}{\longrightarrow}0 \quad\textnormal{uniformly in $(x,y,\alpha)\in \MM\times\MM\times {\cal F}(\ER_+,{\cal P}(E))$}.
\end{equation}
Then,
    \begin{itemize}
    \item[(i)] There exists a map $\alpha\mapsto \mu^\star_\alpha$ from ${\cal C}(\ER_+,{\cal P}(\MM))$ to ${\cal P}(\MM)$ such that
        \begin{equation}
        \sup_{\mu,\alpha}\|\mu R^{\alpha}_{t}-\mu^\star_\alpha\|_{\TV}\overset{t\to+\infty}{\longrightarrow}0.
    \end{equation}
  \item[(ii)]  The convergence result also holds for $\Phi^{\alpha}_{t}(\mu)$:
    \begin{equation}
        \sup_{\mu,\alpha}\|\Phi^{\alpha}_{t}(\mu)-\mu^\star_\alpha\|_{\TV}\overset{t\to+\infty}{\longrightarrow}0.
    \end{equation}
    \end{itemize}
\end{prop}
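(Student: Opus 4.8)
Here is the line of attack I would take. The whole proposition funnels into a single statement about functions: that $R^{\alpha}_{t,0}f$ converges, uniformly in $x\in\MM$, to a constant function $\mu^\star_\alpha(f)\mathbf{1}$, with a rate uniform in $\alpha$ and over $\{\|f\|_\infty\le1\}$. Two preliminary observations show this is enough. First, for $0\le s\le t$ the operator $R^\alpha_{t,s}$ is a Markov kernel: it is positive (the propagator $\KK^\alpha_{t,s}$ is positive, and $g^\alpha_s,g^\alpha_t$ are positive since ${\cal A}_\mu\mathbf{1}\ge\mathbf{1}$), and $R^\alpha_{t,s}\mathbf{1}=\KK^\alpha_{t,s}g^\alpha_s/g^\alpha_t=\mathbf{1}$ because $g^\alpha_t=\KK^\alpha_{t,s}g^\alpha_s$ by the cocycle property of $\KK^\alpha$; hence $\|R^\alpha_{t,s}h\|_\infty\le\|h\|_\infty$ and $R^\alpha_{t,s}$ fixes constant functions. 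Second, from $\KK^\alpha_{t,0}f=g^\alpha_t\,R^\alpha_{t,0}f$ and $\KK^\alpha_{t,0}\mathbf{1}=g^\alpha_t$ one gets $\Phi^\alpha_t(\mu)(f)=\mu(g^\alpha_t\,R^\alpha_{t,0}f)/\mu(g^\alpha_t)$, so $|\Phi^\alpha_t(\mu)(f)-\mu^\star_\alpha(f)|\le\|R^\alpha_{t,0}f-\mu^\star_\alpha(f)\mathbf{1}\|_\infty$; thus (ii) follows from the function-level claim, and it suffices to prove (i).

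For (i), set $\beta(t):=\sup_{x,y,\alpha}\|\delta_xR^\alpha_{t,0}-\delta_yR^\alpha_{t,0}\|_{\TV}$, which tends to $0$ by \eqref{eq:conditioncontractralphaaa}. Markovianity gives the oscillation bound $\mathrm{osc}(R^\alpha_{t,0}f):=\sup_xR^\alpha_{t,0}f(x)-\inf_xR^\alpha_{t,0}f(x)=\sup_{x,y}|(\delta_xR^\alpha_{t,0}-\delta_yR^\alpha_{t,0})(f)|\le\|f\|_\infty\beta(t)$. The decisive step is to promote this ``loss of memory'' into a genuine limit; the trick is to work on the function side and combine the cocycle identity $R^\alpha_{t',0}f=R^\alpha_{t',t}(R^\alpha_{t,0}f)$ with the fact that $R^\alpha_{t',t}$ fixes constants. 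With $c^\alpha_t(f):=\inf_xR^\alpha_{t,0}f(x)$, write $R^\alpha_{t,0}f=c^\alpha_t(f)\mathbf{1}+r$ with $0\le r\le\mathrm{osc}(R^\alpha_{t,0}f)\mathbf{1}$; then $R^\alpha_{t',0}f=c^\alpha_t(f)\mathbf{1}+R^\alpha_{t',t}r$ with $0\le R^\alpha_{t',t}r\le\mathrm{osc}(R^\alpha_{t,0}f)\mathbf{1}$, so $0\le c^\alpha_{t'}(f)-c^\alpha_t(f)\le\|f\|_\infty\beta(t)$ for every $t'\ge t$. Hence $t\mapsto c^\alpha_t(f)$ is nondecreasing and bounded, $\mu^\star_\alpha(f):=\lim_{t\to\infty}c^\alpha_t(f)$ exists, $0\le\mu^\star_\alpha(f)-c^\alpha_t(f)\le\|f\|_\infty\beta(t)$, and together with the oscillation bound, $\|R^\alpha_{t,0}f-\mu^\star_\alpha(f)\mathbf{1}\|_\infty\le\|f\|_\infty\beta(t)$ --- exactly the function-level claim, with uniform rate $\beta(t)$.

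It then remains to identify $\mu^\star_\alpha$ and conclude. The map $f\mapsto\mu^\star_\alpha(f)$ is positive, satisfies $\mu^\star_\alpha(\mathbf{1})=1$, and is linear because uniform limits are unique and the operators $R^\alpha_{t,0}$ are linear; moreover the estimate above, applied at two times, shows $(\mu R^\alpha_{t,0})_{t\ge0}$ is a TV-Cauchy net of probability measures, so its limit $\mu^\star_\alpha$ lies in ${\cal P}(\MM)$ (a closed subset of $({\cal M}_s(\MM),\|\cdot\|_{\TV})$) and, by the contraction $\|\mu R^\alpha_{t,0}-\nu R^\alpha_{t,0}\|_{\TV}\le\beta(t)$, does not depend on $\mu$. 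Finally, for all $\mu$ and $\alpha$, $\|\mu R^\alpha_{t,0}-\mu^\star_\alpha\|_{\TV}=\sup_{\|f\|_\infty\le1}|\mu(R^\alpha_{t,0}f-\mu^\star_\alpha(f)\mathbf{1})|\le\beta(t)\to0$, which is (i); and (ii) follows from the reduction in the first paragraph.

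The main obstacle is precisely the passage from uniform loss of memory to convergence. Attempting to fix a reference $\nu_0$ and prove $(\nu_0R^\alpha_{t,0})_t$ Cauchy directly fails: one would need $\|\rho-\rho R^\alpha_{t',t}\|_{\TV}\to0$ with $\rho=\nu_0R^\alpha_{t,0}$, i.e. near-invariance of the current state under the ``future block'' $R^\alpha_{t',t}$, which has no reason to hold for an arbitrary driving $\alpha$. What saves the day is that the \emph{approximate constancy} of $R^\alpha_{t,0}f$ is rigid under further evolution, precisely because Markov operators fix constants; this produces the bounded monotone quantity $c^\alpha_t(f)$, after which everything is bookkeeping with \eqref{eq:cocycle} and $\beta(t)\to0$. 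A secondary point is that in the non-compact case $\mu^\star_\alpha$ must be shown to be a genuine probability measure, which here comes from TV-completeness of ${\cal P}(\MM)$ rather than from any compactness argument.
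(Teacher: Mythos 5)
Your proof is correct, and it rests on exactly the same two ingredients as the paper's: the uniform oscillation bound supplied by \eqref{eq:conditioncontractralphaaa}, and the cocycle property \eqref{eq:cocycle} combined with the fact that $R^{\alpha}_{t,s}$ is a Markov operator (a point the paper uses implicitly and you spell out); your treatment of $(ii)$ via the $g^{\alpha}_t$-tilted representation of $\Phi^{\alpha}_t(\mu)$ is also the paper's. The only genuine difference is bookkeeping: you argue on the function side, extracting the bounded monotone quantity $c^{\alpha}_t(f)=\inf_x R^{\alpha}_{t,0}f(x)$, whereas the paper argues on the measure side, showing directly that $t\mapsto\mu R^{\alpha}_t$ is TV-Cauchy and that the limit is independent of $\mu$ by the contraction estimate. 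This brings me to the one thing you got wrong, which sits in your closing commentary rather than in the proof: the direct measure-level Cauchy argument does not fail --- it is precisely the paper's proof. With the convention \eqref{eq:cocycle}, the composition on measures reads $\mu R^{\alpha}_{t+s}=(\mu R^{\alpha}_{t+s,t})R^{\alpha}_{t}$, i.e. the block attached to the time interval $[t,t+s]$ acts on the \emph{input} measure and the contracting operator $R^{\alpha}_{t}$ acts last; hence $\|\mu R^{\alpha}_{t+s}-\mu R^{\alpha}_{t}\|_{TV}\le \sup_{\nu,\nu'}\|\nu R^{\alpha}_{t}-\nu' R^{\alpha}_{t}\|_{TV}\le\Delta_t$, and no ``near-invariance under the future block'' is ever needed. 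Your objection tacitly used the opposite order $(\nu_0 R^{\alpha}_{t,0})R^{\alpha}_{t',t}$, which is not the cocycle here; once the order is read correctly, your monotone-infimum detour and the paper's Cauchy argument are two phrasings of the same mechanism, namely that further Markov averaging cannot move $R^{\alpha}_{t,0}f$ outside its own range.
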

\begin{Rq} In this result, the limit does not depend on the starting distribution $\mu$. Unfortunately, when we apply to $\alpha=(\mu_{t})_{t\ge0}$, \emph{i.e.} on the distribution of the solution itself, the limit will \emph{a priori} depend on $\mu_0$.
\end{Rq}

\begin{proof}
  \noindent  $(i)$ Let $f:\MM\rightarrow\ER$ denote a  bounded function with $\|f\|_\infty\le 1$. For all $\mu,\nu\in\mathcal{P}(\mathcal{E})$, we have
    \begin{align}
        |\mu R^{\alpha}_{t}f-\nu R^{\alpha}_{t}f|&=\bigg|\int\bigg(R^{\alpha}_{t}f(x)-R^{\alpha}_{t}f(y)\bigg)\mu(dx)\nu(dy)\bigg|\\
        &\leq\sup_{x,y}|R^{\alpha}_{t}f(x)-R^{\alpha}_{t}f(y)|.\nonumber
    \end{align}
    As $\|\delta_{x}R^{\alpha}_{t}-\delta_{y}R^{\alpha}_{t}\|_{\TV}\overset{t\to+\infty}{\longrightarrow}0$ uniformly in $(x,y,\alpha)$, there exists $(\Delta_{t})_{t\ge0}$ independent of $x,y,\alpha,$ and $f$ (with $\|f\|_\infty\le 1$) such that 
    \begin{equation}\label{eq:contractralpha}
        |\mu R^{\alpha}_{t}f-\nu R^{\alpha}_{t}f|\leq\Delta_{t}\overset{t\to+\infty}{\longrightarrow}0.
    \end{equation}
  Using the cocycle property \eqref{eq:cocycle}, we deduce that
    \begin{equation}\label{eq:cauchybound}
        \|\mu R^{\alpha}_{t+s}-\mu R^{\alpha}_{t}\|_{\TV}=\|(\mu R^{\alpha}_{t+s,t})R^{\alpha}_{t}-\mu R^{\alpha}_{t}\|_{\TV}\leq\Delta_{t}.
    \end{equation}
    and hence,  $(\mu R^{\alpha}_{t})_{t\geq0}$ is a Cauchy sequence for the $\|.\|_{\TV}$-norm. For every $(\alpha,\mu)$, this thus converges to a probability $\mu^\star_{\alpha}$ which does not depend on $\mu$ thanks to \eqref{eq:contractralpha}. The bound in \eqref{eq:cauchybound} being uniform in $(\alpha,\mu)$, the convergence to this $\mu^\star_{\alpha}$  is uniform, that is:
\begin{equation}\label{eq:unifconvRalpha}
    \sup_{\mu,\alpha}\|\mu R^\alpha_t-\mu^\star_\alpha\|_{\TV}\le \Delta_t\xrightarrow{t\rightarrow+\infty}0.
    \end{equation}
    
  \noindent $(ii)$  For a probability distribution $\nu$ and a measurable function $f:\MM\rightarrow\ER$, we have
  $$
  (\Phi^{\alpha}_{t}(\mu)-\nu)f=\frac{1}{\mu g^{\alpha}_{t}}\left[\mu (g^{\alpha}_{t}\left(R^{\alpha}_{t}f-\nu (f))\right)\right]=\mathbb{E}[R^{\alpha}_{t}f(Z)-\nu (f)]
  $$
where $\PE_Z=\frac{\mu(.\times g_t^\alpha)}{ \mu g_t^\alpha}$. Thus, using \eqref{eq:unifconvRalpha}, we deduce that
$$
  \|\Phi^{\alpha}_{t}(\mu)-\mu^\star_\alpha\|_{\TV}\le \sup_{\nu\in{\cal P}(\MM)}\|\nu R^\alpha_t-\mu^\star_\alpha\|_{\TV}\le \Delta_t\xrightarrow{t\rightarrow+\infty}0.
  $$
\end{proof}
Now, we  want to exhibit some sufficient conditions which ensure \eqref{eq:conditioncontractralphaaa}.
\begin{prop}\label{prop:condsignalpha}
 Assume \ref{condhzero} and \ref{condhun} and that there exist $\varepsilon>0$, $c>0$ and a probability distribution $\Psi$ such that for all $n$ and $\alpha$, $\KK^{\alpha}_{n+1,n}\geq\varepsilon\Psi$, $\|\KK^{\alpha}_{n+1,n}\|_{\infty}\leq c$ and 
    \begin{equation}\label{eq:condpsignaplha}
        \sum_{n}\frac{\Psi(g^{\alpha}_{n})}{\|g^{\alpha}_{n}\|_{\infty}}=+\infty.
    \end{equation}

    Then, Condition \eqref{eq:conditioncontractralphaaa} holds true.
\end{prop}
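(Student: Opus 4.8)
The plan is to view $R^{\alpha}_{t}=R^{\alpha}_{t,0}$ as a composition of the one-step kernels $R^{\alpha}_{n+1,n}$, to prove a Doeblin-type minorisation for each of them whose minorising mass is of order $\Psi(g^{\alpha}_{n})/\|g^{\alpha}_{n}\|_{\infty}$, and then to conclude by the standard Dobrushin contraction estimate — assumption \eqref{eq:condpsignaplha} being exactly what forces the product of the contraction coefficients to tend to $0$. First I would record that each $R^{\alpha}_{t,s}$ is a genuine Markov kernel: from the propagator cocycle $\KK^{\alpha}_{t,u}=\KK^{\alpha}_{t,s}\circ\KK^{\alpha}_{s,u}$ and $g^{\alpha}_{s}=\KK^{\alpha}_{s,0}{\bf 1}$ one gets $\KK^{\alpha}_{t,s}(g^{\alpha}_{s})=\KK^{\alpha}_{t,0}{\bf 1}=g^{\alpha}_{t}$, hence $R^{\alpha}_{t,s}{\bf 1}={\bf 1}$; a one-line computation (using $(R^{\alpha}_{s,u}f)\,g^{\alpha}_{s}=\KK^{\alpha}_{s,u}(fg^{\alpha}_{u})$) gives the cocycle \eqref{eq:cocycle} for $(R^{\alpha}_{t,s})$ as well. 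Since the left action of a Markov kernel is a $\|\cdot\|_{\TV}$-contraction, writing $n=\lfloor t\rfloor$,
$$\|\delta_{x}R^{\alpha}_{t}-\delta_{y}R^{\alpha}_{t}\|_{\TV}=\|(\delta_{x}-\delta_{y})\,R^{\alpha}_{t,n}\,R^{\alpha}_{n,n-1}\cdots R^{\alpha}_{1,0}\|_{\TV},$$
so it suffices to bound, for each $k$, the amount by which $R^{\alpha}_{k+1,k}$ contracts the total variation distance; this is where the Doeblin minorisation enters.

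\textbf{Minorisation of the one-step kernels.} For every $n$, $\alpha$, $x\in\MM$ and every measurable $f\ge0$, the hypothesis $\KK^{\alpha}_{n+1,n}\ge\varepsilon\Psi$ gives $\KK^{\alpha}_{n+1,n}(fg^{\alpha}_{n})(x)\ge\varepsilon\,\Psi(fg^{\alpha}_{n})$, while $\|\KK^{\alpha}_{n+1,n}\|_{\infty}\le c$ gives $g^{\alpha}_{n+1}(x)=\KK^{\alpha}_{n+1,n}g^{\alpha}_{n}(x)\le c\,\|g^{\alpha}_{n}\|_{\infty}$; dividing the first by the second,
$$R^{\alpha}_{n+1,n}f(x)=\frac{\KK^{\alpha}_{n+1,n}(fg^{\alpha}_{n})(x)}{g^{\alpha}_{n+1}(x)}\ \ge\ \varepsilon^{\alpha}_{n}\,\nu^{\alpha}_{n}(f),\qquad \varepsilon^{\alpha}_{n}:=\frac{\varepsilon}{c}\cdot\frac{\Psi(g^{\alpha}_{n})}{\|g^{\alpha}_{n}\|_{\infty}},\quad \nu^{\alpha}_{n}(dy):=\frac{g^{\alpha}_{n}(y)\,\Psi(dy)}{\Psi(g^{\alpha}_{n})}.$$
One has to check that $\nu^{\alpha}_{n}$ is a well-defined probability, i.e. $0<\Psi(g^{\alpha}_{n})<+\infty$: finiteness of $g^{\alpha}_{n}$ follows from \eqref{eq:amutordi}, and positivity follows by induction from $g^{\alpha}_{0}={\bf 1}$ together with $g^{\alpha}_{n+1}\ge\varepsilon\,\Psi(g^{\alpha}_{n})>0$ (in fact $\dot g^{\alpha}_{t}=\KK^{\alpha}_{t,0}({\cal A}_{\alpha_{t}}{\bf 1})\ge0$, so $g^{\alpha}_{t}$ is pointwise nondecreasing and $g^{\alpha}_{n}\ge{\bf 1}$, $\Psi(g^{\alpha}_{n})\ge1$). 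Since $\nu^{\alpha}_{n}\in{\cal P}(\MM)$, the minorisation $R^{\alpha}_{n+1,n}(x,\cdot)\ge\varepsilon^{\alpha}_{n}\nu^{\alpha}_{n}$ implies that $R^{\alpha}_{n+1,n}$ contracts $\|\cdot\|_{\TV}$ by a factor $1-\varepsilon^{\alpha}_{n}$.

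\textbf{Conclusion and main obstacle.} Plugging this into the decomposition of the first step,
$$\|\delta_{x}R^{\alpha}_{t}-\delta_{y}R^{\alpha}_{t}\|_{\TV}\ \le\ 2\prod_{k=0}^{\lfloor t\rfloor-1}\bigl(1-\varepsilon^{\alpha}_{k}\bigr)\ \le\ 2\exp\!\Bigl(-\frac{\varepsilon}{c}\sum_{k=0}^{\lfloor t\rfloor-1}\frac{\Psi(g^{\alpha}_{k})}{\|g^{\alpha}_{k}\|_{\infty}}\Bigr)\overset{t\to+\infty}{\longrightarrow}0$$
by \eqref{eq:condpsignaplha}, which is precisely \eqref{eq:conditioncontractralphaaa}. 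The point I expect to be the main obstacle is the \emph{uniformity in $\alpha$} required by \eqref{eq:conditioncontractralphaaa}: the bound above is already uniform in $(x,y)$, but for the limit to be uniform in $\alpha$ one needs the partial sums $\sum_{k=0}^{N-1}\Psi(g^{\alpha}_{k})/\|g^{\alpha}_{k}\|_{\infty}$ to diverge \emph{uniformly} in $\alpha$, not merely for each fixed $\alpha$. So the genuine work lies in reading \eqref{eq:condpsignaplha} with enough uniformity — e.g. exhibiting a deterministic divergent series $(a_{k})$ with $\Psi(g^{\alpha}_{k})/\|g^{\alpha}_{k}\|_{\infty}\ge a_{k}$ for all $\alpha$. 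This is exactly what the two-sided estimate of \Cref{lem:lowerupper} provides (there $g^{\alpha}_{k}$ is trapped between two fixed constant multiples of a common function, so the ratio is bounded below by a constant independent of $k$ and $\alpha$), which is the regime in which the proposition is used afterwards; accordingly I would either state \eqref{eq:condpsignaplha} in this uniform form or first upgrade it to the uniform version under \ref{condhzero} and \ref{condhun}.
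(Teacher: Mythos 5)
Your argument is essentially the paper's own proof: the same one-step Doeblin minorisation $R^{\alpha}_{n+1,n}\ge \frac{\varepsilon}{c}\,\frac{\Psi(g^{\alpha}_{n})}{\|g^{\alpha}_{n}\|_{\infty}}\,\Psi^{\alpha}_{n}$ with $\Psi^{\alpha}_{n}(f)=\Psi(fg^{\alpha}_{n})/\Psi(g^{\alpha}_{n})$, followed by the product of contraction factors along the cocycle and the reduction of real $t$ to $\lfloor t\rfloor$. Your closing remark on uniformity is also on target: the proof gives convergence uniform in $\alpha$ only when \eqref{eq:condpsignaplha} is read with a uniform divergence (e.g.\ a common lower bound on the ratios), and this is precisely how the proposition is used, since under \ref{condhquatre} the proof of \Cref{prop:convuniformemustar} establishes $\inf_{n,\alpha}\Psi(g^{\alpha}_{n})/\|g^{\alpha}_{n}\|_{\infty}\ge c>0$.
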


\begin{proof}
\textit{Step 1}: We show that under the assumptions, 
$$R^{\alpha}_{n+1,n}\geq\varepsilon_{n}\Psi^{\alpha}_{n},$$
where $(\varepsilon_n)_{n\ge0}$ is a sequence of positive numbers with $\sum \varepsilon_n=+\infty$ and $\Psi^{\alpha}_{n}\in{\cal P}(\MM)$ (depending on $n$ and $\alpha$): for every non-negative function $f$, we have
    \begin{equation*}
        R^{\alpha}_{n+1,n}f=\frac{\KK^{\alpha}_{n+1,n}(fg^{\alpha}_{n})}{g^{\alpha}_{n+1}}.
    \end{equation*}
    Since $g^{\alpha}_{n+1}=\KK^{\alpha}_{n+1,n}g^{\alpha}_{n}$, it follows that
    \begin{equation*}
        R^{\alpha}_{n+1,n}f\geq\frac{\varepsilon\Psi(fg^{\alpha}_{n})}{c\|g^{\alpha}_{n}\|_{\infty}}=\varepsilon_n\Psi_n^\alpha,
    \end{equation*}
with
    \begin{equation*}
        \varepsilon_{n}=\frac{\varepsilon\Psi(g^{\alpha}_{n})}{c\|g^{\alpha}_{n}\|_{\infty}}\quad\textnormal{and}\quad\Psi^{\alpha}_{n}(f)=\frac{\Psi(fg^{\alpha}_{n})}{\Psi(g^{\alpha}_{n})}.
    \end{equation*}
Assumption \eqref{eq:condpsignaplha} finally ensures that  $\sum \varepsilon_n=+\infty$.\\

\noindent \textit{Step 2}: We prove the statement. By Step 1,
\begin{equation*}
        \delta_{x}R^{\alpha}_{n+1,n}=\varepsilon_{n}\Psi^{\alpha}_{n}+(1-\varepsilon_{n})Q^{\alpha}_{n+1}(x,\cdot),
    \end{equation*}
    where $Q^{\alpha}_{n+1}$ is a Markov kernel. We deduce that
    \begin{equation*}
        \mu R^{\alpha}_{n+1,n}-\nu R^{\alpha}_{n+1,n}=(1-\varepsilon_{n})(\mu Q^{\alpha}_{n+1}-\nu Q^{\alpha}_{n+1})
    \end{equation*}
    Since for a given function $f$, $\|Q^{\alpha}_{n+1} f\|_\infty\le \|f\|_\infty$, we get
    \begin{equation*}
        \|\mu R^{\alpha}_{n+1,n}-\nu R^{\alpha}_{n+1,n}\|_{\TV}= (1-\varepsilon_{n})\|\mu Q^{\alpha}_{n+1}-\nu Q^{\alpha}_{n+1}\|_{\TV}\leq(1-\varepsilon_{n})\|\mu-\nu\|_{\TV}.
    \end{equation*}
    Thus, for all $k\in\{0,\ldots,n\}$, we have
    \begin{equation*}
        \|(\mu R^{\alpha}_{n+1,k+1})R^{\alpha}_{k+1,k}-(\nu R^{\alpha}_{n+1,k+1})R^{\alpha}_{k+1,k}\|_{\TV}\leq(1-\varepsilon_{k})\|\mu R^{\alpha}_{n+1,k+1}-\nu R^{\alpha}_{n+1,k+1}\|_{\TV},
    \end{equation*}
    and an induction leads to
    \begin{equation*}
        \|\mu R^{\alpha}_{n}-\nu R^{\alpha}_{n}\|_{\TV}\leq\prod_{k}(1-\varepsilon_{k})\|\mu-\nu\|_{\TV}\le 2\prod_{k}(1-\varepsilon_{k}) \overset{n\to+\infty}{\longrightarrow}0,
    \end{equation*}
    since $\sum\varepsilon_{n}=+\infty$. Furthermore, the convergence is uniform in $(\nu,\mu,\alpha)$. This uniform convergence extends to $t\in\ER_+$ by noting that
    \begin{equation*}
       \|\mu R^{\alpha}_{t}-\nu R^{\alpha}_{t}\|_{\TV}=\|\mu R^{\alpha}_{t,\lfloor t\rfloor })R^{\alpha}_{\lfloor t\rfloor}-(\nu R^{\alpha}_{t,\lfloor t\rfloor})R^{\alpha}_{\lfloor t\rfloor}\|_{\TV}.
    \end{equation*}
\end{proof}
We are now ready to state the main result of this section:
\begin{prop}\label{prop:convuniformemustar}
    Assume \ref{condhzero}, \ref{condhun} and \ref{condhquatre}. Then, 
    \begin{itemize}
    \item[(i)]
     \begin{equation}
        \sup_{\mu,\alpha}\|\Phi^{\alpha}_{t}(\mu)-\mu^\star_\alpha\|_{\TV}\overset{t\to+\infty}{\longrightarrow}0.
    \end{equation}
    \item[(ii)] For any initial condition $\nu_0\in {\cal P}(\MM)$, there exists a probability ${\cal I}_{\nu_0}$ such that the unique solution  to $\dot{\nu}=-\nu+\Pi_\nu$ converges to ${\cal I}_{\nu_0}$ as $t\rightarrow+\infty$ for the $TV$-distance. Furthermore, the convergence is uniform in the initial condition:
    $$\sup_{\nu_0\in{\cal P}(\MM)}\|\nu_t^{\nu_0}-{\cal I}_{\nu_0}\|_{TV}\xrightarrow{t\rightarrow+\infty}0.$$
    \end{itemize}
    \end{prop}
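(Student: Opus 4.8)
The plan is to reduce $(i)$ to the abstract machinery already in place: $(i)$ is literally the conclusion of \Cref{prop:condsignalphaI}$(ii)$, so it suffices to verify its hypothesis \eqref{eq:conditioncontractralphaaa}, which I would obtain by checking the three conditions of \Cref{prop:condsignalpha} for the propagator $(\KK^{\alpha}_{t,s})_{s\le t}$ defined by \eqref{ode:kts} and the weights $g^{\alpha}_t=\KK^{\alpha}_{t,0}\mathbf{1}$. Statement $(ii)$ will then follow from $(i)$ via the deterministic time-change of \Cref{lem:changevariable}$(ii)$.

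To check the conditions of \Cref{prop:condsignalpha} I would work with the mild form of \eqref{ode:kts}, namely $\KK^{\alpha}_{t,s}={\rm Id}+\int_s^t\KK^{\alpha}_{v,s}{\cal A}_{\alpha_v}\,dv$, together with ${\cal A}_\mu=\sum_{k\ge 0}K_{\mu,\partial}^k$. First, \emph{boundedness}: by \eqref{eq:amutordi}, $M:=\sup_\mu\|{\cal A}_\mu\|_\infty<\infty$ under \ref{condhzero}--\ref{condhun}, so Gronwall's lemma gives $\|\KK^{\alpha}_{n+1,n}\|_\infty\le e^{M}$, uniformly in $n$ and $\alpha$. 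Second, the \emph{Doeblin minorization}: since ${\cal A}_\mu\ge K_{\mu,\partial}^{\ell}\ge\varepsilon\Psi$ (by \ref{condhtrois}, $\Psi$ being the probability occurring in \ref{condhquatre}) and since ${\cal A}_\mu$ is a positive operator, so that $\KK^{\alpha}_{v,n}\ge{\rm Id}$, the mild form yields $\KK^{\alpha}_{n+1,n}\ge\int_n^{n+1}\KK^{\alpha}_{v,n}{\cal A}_{\alpha_v}\,dv\ge\int_n^{n+1}{\cal A}_{\alpha_v}\,dv\ge\varepsilon\Psi$. Third, the \emph{divergence condition} $\sum_n\Psi(g^{\alpha}_n)/\|g^{\alpha}_n\|_\infty=+\infty$: expanding $g^{\alpha}_t$ by the Peano--Baker series $\KK^{\alpha}_{t,0}=\sum_{k\ge 0}\int_{0\le v_k\le\cdots\le v_1\le t}{\cal A}_{\alpha_{v_k}}\cdots{\cal A}_{\alpha_{v_1}}\,dv$ and each ${\cal A}_\mu$ again as $\sum_j K_{\mu,\partial}^j$, one writes $g^{\alpha}_t$ as a nonnegative combination --- norm-convergent by the boundedness above --- of functions of the form $h=K_{\mu_1,\partial}\circ\cdots\circ K_{\mu_p,\partial}\mathbf{1}$. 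By \ref{condhquatre} every such $h$ satisfies $\Psi(h)\ge c\,h(x)$ for all $x$, hence $\Psi(h)\ge c\|h\|_\infty$; since this ratio inequality is stable under nonnegative combinations and under the norm limit, $\Psi(g^{\alpha}_t)\ge c\|g^{\alpha}_t\|_\infty$ for all $t$ and $\alpha$, so the series is minorized by $\sum_n c=+\infty$. Feeding these three facts into \Cref{prop:condsignalpha} gives \eqref{eq:conditioncontractralphaaa}, and \Cref{prop:condsignalphaI}$(ii)$ then yields $(i)$ with $\delta_t:=\sup_{\mu,\alpha}\|\Phi^{\alpha}_t(\mu)-\mu^\star_\alpha\|_{TV}\to 0$.

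For $(ii)$, fix $\nu_0\in{\cal P}(\MM)$. By \Cref{lem:changevariable}$(ii)$ the solution of $\dot\nu=-\nu+\Pi_\nu$ started at $\nu_0$ is $\nu_t^{\nu_0}=\mu_{\tau(t)}$, where $(\mu_t)_{t\ge0}$ solves \eqref{eq:linearizedmut} from $\nu_0$ and $\tau$ is a ${\cal C}^1$-diffeomorphism of $\ER_+$ whose derivative is bounded below by a positive constant $\kappa$ (by \eqref{eq:cundiffeo}), so $\tau(t)\ge\kappa t\to+\infty$. By \eqref{def:mut45}--\eqref{eq:apreschgttemps}, $\mu_t=\Phi^{\mu_\bullet}_t(\nu_0)$ with $\mu_\bullet:=(\mu_s)_{s\ge0}\in{\cal C}(\ER_+,{\cal P}(\MM))$, so $(i)$ gives $\|\mu_t-\mu^\star_{\mu_\bullet}\|_{TV}\le\delta_t$. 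Setting ${\cal I}_{\nu_0}:=\mu^\star_{\mu_\bullet}$, we obtain $\|\nu_t^{\nu_0}-{\cal I}_{\nu_0}\|_{TV}=\|\mu_{\tau(t)}-\mu^\star_{\mu_\bullet}\|_{TV}\le\delta_{\tau(t)}\le\sup_{s\ge\kappa t}\delta_s\to 0$; since neither $\delta$ nor $\kappa$ depends on $\nu_0$, the convergence is uniform in $\nu_0$. Note that ${\cal I}_{\nu_0}$ genuinely depends on $\nu_0$, through the trajectory $\mu_\bullet$ fed into $\alpha\mapsto\mu^\star_\alpha$: consistently with the remark following \Cref{prop:condsignalphaI}, no uniqueness of the equilibrium of \eqref{eq:nutdyn} is claimed.

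The main obstacle is the divergence step: \ref{condhquatre} is formulated in terms of finite products of the \emph{killed} sub-Markovian kernels $K_{\mu,\partial}$, whereas the quantity that must inherit a comparable ratio bound is the \emph{continuous-time} weight $g^{\alpha}_t$ of the non-autonomous flow \eqref{ode:kts}. The Peano--Baker expansion combined with ${\cal A}_\mu=\sum_k K_{\mu,\partial}^k$ makes the transfer essentially bookkeeping, but it is the conceptual core of the argument: it shows why \ref{condhquatre} --- an assumption which in the linear case guarantees uniqueness of the QSD --- is exactly what forces the \emph{uniform} (in $\mu$ and $\alpha$) convergence needed here, even though uniqueness of the limiting ODE's equilibrium genuinely fails.
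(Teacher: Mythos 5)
Your proposal is correct and follows essentially the same route as the paper: reduction to \Cref{prop:condsignalphaI}$(ii)$ via \Cref{prop:condsignalpha}, with the divergence condition \eqref{eq:condpsignaplha} obtained from the Peano--Baker expansion of $\KK^{\alpha}_{t,0}$, the expansion ${\cal A}_\mu=\sum_k K_{\mu,\partial}^k$ and \ref{condhquatre}, then the time change of \Cref{lem:changevariable}$(ii)$ for part $(ii)$. The only difference is cosmetic: you spell out the boundedness and Doeblin minorization hypotheses of \Cref{prop:condsignalpha} (the latter via \ref{condhtrois}, whose $\Psi$ is the one in \ref{condhquatre}), which the paper leaves implicit.
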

    \begin{Rq} Following carefully the proof, we in fact obtained the existence of some positive $C$ and $\beta$ such that
     \begin{equation}
        \sup_{\mu,\alpha}\|\Phi^{\alpha}_{t}(\mu)-\mu^\star_\alpha\|_{\TV}\le C e^{-\beta t}.
    \end{equation}
    \end{Rq}
 \begin{proof}
\emph{(i)}  By \Cref{prop:condsignalphaI}\emph{(ii)} and \Cref{prop:condsignalpha}, it is enough to check that \eqref{eq:condpsignaplha} holds true.  We begin by giving a series expansion of the solution of 
\begin{equation}
    \dot{\varphi}_{t}=\varphi_{t}{\cal A}_{\alpha_{t}}.
\end{equation}
Let $s_0\in\ER_+$:
\begin{align*}
    \varphi_{s_{0}}&=\varphi_{0}+\int_{0}^{s_{0}}\varphi_{s_{1}}{\cal A}_{\alpha_{s_{1}}}ds_{1}=\varphi_{0}+\int_{0}^{s_{0}}\bigg(\varphi_{0}+\int_{0}^{s_{1}}\varphi_{s_{2}}{\cal A}_{\alpha_{s_{2}}}ds_{2}\bigg){\cal A}_{\alpha_{s_{1}}}ds_{1}\\
    &=\varphi_{0}+\int_{0}^{s_{0}}\varphi_{0}{\cal A}_{\alpha_{s_{1}}}ds_{1}+\int_{0}^{s_{0}}\int_{0}^{s_{1}}\varphi_{s_{2}}{\cal A}_{\alpha_{s_{2}}}{\cal A}_{\alpha_{s_{1}}}ds_{2}ds_{1}.
\end{align*}
An induction then leads to:  
\begin{align*}
\forall n\geq 1,\quad   \varphi_{s_{0}}=\varphi_{0}&+\sum_{k=1}^{n}\int_{0}^{s_{0}}\cdots\int_{0}^{s_{k-1}}\varphi_{0}{\cal A}_{\alpha_{s_{k}}}\cdots {\cal A}_{\alpha_{s_{1}}}ds_{k}\cdots ds_{1}+r_n(s_0),
   \end{align*}
   where 
$$   r_n(s_0)=\int_{0}^{s_0}\cdots\int_{0}^{s_{n}}\varphi_{s_{n+1}}{\cal A}_{\alpha_{s_{n+1}}}\cdots {\cal A}_{\alpha_{s_{1}}}ds_{n+1}\cdots ds_{1}.$$
Under the assumptions,  $\sup_{\mu\in{\cal P}(\MM)}\|{\cal A}_\mu\|_\infty<+\infty$. This implies that
$$ \|r_n(s_0)\|_\infty\le \sup_{s\in[0, s_0]}\|\varphi_s\|_\infty \frac{(cs_0)^{n+1}}{(n+1)!}\le \|\varphi_0\|_\infty e^{c s_0}\frac{(cs_0)^{n+1}}{(n+1)!}, $$
where the second inequality follows from Gronwall's lemma. Thus, for any $s_0$, $r_n(s_0)\xrightarrow{n\rightarrow+\infty}0$ and hence, 
\begin{align}\label{eq:semiexplicit}
   K_{s_0,0}^{\alpha}={\rm Id}+\sum_{k\ge1}\int_{0}^{s_{0}}\cdots\int_{0}^{s_{k-1}}{\cal A}_{\alpha_{s_{k}}}\cdots {\cal A}_{\alpha_{s_{1}}}ds_{k}\cdots ds_{1}.
   \end{align}
Note that when $\mu\mapsto {\cal A}_{\mu}$  is constant, one retrieves the classical exponential solution of the system.\\

\noindent We now use the very definition of ${\cal A}_\mu$ to get for any $k\ge1$:
$${\cal A}_{\alpha_{s_{k}}}\cdots {\cal A}_{\alpha_{s_{1}}}=\sum_{(n_1,\ldots,n_{k})\in \mathbb{N}^{k+1}} K_{\alpha_{s_{k},\partial}}^{n_{k}}\cdots K_{\alpha_{s_{1},\partial}}^{n_1}.$$
Thus, using \ref{condhquatre} in the second line, we get
\begin{align*}
\Psi(g_t^\alpha)&=\Psi(K_{t,0}^\alpha{\bf 1})=1+\sum_{k\ge1}\sum_{(n_1,\ldots,n_{k})\in \mathbb{N}^{k+1}}\int_{0}^{s_{0}}\cdots\int_{0}^{s_{k-1}}\Psi(K_{\alpha_{s_{k}},\partial}^{n_{k}}\cdots K_{\alpha_{s_{1}},\partial}^{n_1}{\bf 1})ds_{s_k}\cdots ds_{1}\\
&\ge c\left[1+\sum_{k\ge1}\sum_{(n_1,\ldots,n_{k})\in \mathbb{N}^{k+1}}\int_{0}^{s_{0}}\cdots\int_{0}^{s_{k-1}}K_{\alpha_{s_{k}},\partial}^{n_{k}}\cdots K_{\alpha_{s_{1}},\partial}^{n_1}{\bf 1}ds_{k}\cdots ds_{1}\right]\\
&\ge c g_t^{\alpha}.
\end{align*}
In particular,
$$\inf_{n,\alpha}\frac{\Psi(g_n^\alpha{\bf 1})}{\|g_n^{\alpha}\|_\infty}\ge c>0,$$
which (trivially) implies \eqref{eq:condpsignaplha}.\\

\noindent \emph{(ii)} To prove this statement, let us recall that by \Cref{lem:changevariable} (and the related notations), $(\mu_t)$ defined by $\mu_t=\nu_{\tau^{-1}(t)}^{\nu_0}$ satisfies \eqref{def:mut45} (where $\tau^{-1}$ denotes the inverse of the bijective function $\tau$). By  \eqref{eq:apreschgttemps}, one deduces that 
$$\mu_t=\Phi_t^{\mu_{\bullet}}(\nu_0).$$
We thus apply  \emph{(i)} with $\alpha=\mu_{\bullet}$. Note that by the uniqueness property given in \cref{lem:changevariable}\emph{(i)}, $(\mu_t)_{t\ge0}$  is uniquely determined by its initial condition $\nu_0$. We can thus denote the probability $\mu_\alpha^\star$ of \emph{(i)} by ${\cal I}_{\nu_0}$ and deduce that
$$\sup_{\nu_0\in{\cal P}(\MM)}\|\mu_t^{\nu_0}-{\cal I}_{\nu_0}\|_{TV}\xrightarrow{t\rightarrow+\infty}0.$$
Finally, since the time change $\tau$ satisfies \eqref{eq:cundiffeo} where $c$ is independent of $\nu_0$, the above uniform convergence also applies to $(\nu_t^{\nu_0})_{t\ge0}$.
 \end{proof}

\section{Proof of Theorem \ref{theo:discret}}\label{subsecseptun}
\emph{(i)} We first check that \ref{condhzero}, \ref{condhun} and \ref{condhdeux} imply \ref{condpzero}, \ref{condpun} and \ref{condpdeux} when 
$$\kmu= K_{\mu,\partial}+\delta_\mu(.)\mu.$$
The uniqueness of the invariant distribution $\Pi_\mu$ is a direct consequence of \ref{condhtrois}. The fact that $\mu\mapsto \Pi_\mu$ is Lipschitz is established in \Cref{lem:pimulip}. Thus, \ref{condpzero} is true. Assumption \ref{condpun} is a direct consequence of \ref{condhtrois}. Finally, by \ref{condhdeux}, $\mu\mapsto K_{\mu,\partial}$ and $\mu\mapsto \delta_{\mu}$ are clearly Lipschitz for the TV-norm so that $\mu\mapsto \kmu$ is finally Lipschitz continuous. Finally, by \eqref{eq:munrecursive}, $\|\mu_{n+1}-\mu_n\|_{TV}\le 2\gamma_{n+1}$, so that $(X_n,\mu_n)$ satisfies the assumptions of  \cref{prop:asymptoticpseudotrajectory}. 

Then, the property $(i)$ is a classical consequence of the conclusion of  this proposition (\cref{prop:asymptoticpseudotrajectory}), of the tightness assumption on $(\mu_n)$,  and of the theory of asymptotic pseudo-trajectories. For the sake of completeness, we recall insights and references. First, using again that  $\|\mu_{n+1}-\mu_n\|_{TV}\le 2\gamma_{n+1} $, one can check that $(\tilde{\mu}_t)_{t\ge0}$ is Lipschitz for the $TV$-distance (which metrizes the weak topology). Then, owing to the tightness of $(\mu_n)_{n\ge1}$ and to the ``shifted'' construction of the sequence $(\tilde{\mu}^{(n)})_{n\ge0}$, one deduces the tightness of $(\tilde{\mu}^{(n)})_{n\ge0}$ on ${\cal C}(\ER_+,{\cal P}(\MM))$ from an Ascoli argument (see \cite[Theorem 3.2]{B99} or \cite[Proof of Proposition 3.5]{panloup_reygner} for similar arguments).\\

Second, to show that every weak limit is a solution to $\dot{\nu}=F(\nu)$, we first deduce from the construction that 
$$ \forall t\ge0, \quad \tilde{\mu}^{(n)}_t= \tilde{\mu}^{(n)}_0+\int_0^t F(\tilde{\mu}^{(n)}_{N(n,s)}) ds+\int_0^t \varepsilon_{N(n,s)+1} ds.$$
Noting that for a given bounded function $f$, 
$$\int_0^t \varepsilon_{N(n,s)+1}(f) ds=|\sum_{k=n+1}^{N(n,t)+1} \gamma_k\left(f(X_k)-\Pi_{\mu_k}(f)\right)+O(\gamma_{n}^{-1}),$$
we deduce from \Cref{prop:asymptoticpseudotrajectory} that for any $T>0$,
$$\lim_{n\rightarrow+\infty} \sup_{t\le T} \left|\int_0^t \varepsilon_{N(n,s)+1}(f) ds\right|=0\quad a.s.$$
Combined with the fact that $F$ is Lipschitz continuous (by \cref{lem:pimulip}), this leads to the following property: for all bounded measurable $f:\MM\rightarrow\ER$, for all $T>0$,
$$\lim_{n\rightarrow+\infty} \sup_{t\le T} \left|\tilde{\mu}^{(n)}_t(f)-\nu_t^{\mu_n})\right|=0\quad a.s.,$$
where $\nu_t^{\mu}$ denotes the unique solution of \eqref{eq:nutdyn} starting from $\mu$ (existence and uniqueness are given by \cref{lem:changevariable}\emph{(ii)}). This property then implies the result (see \cite[Theorem 3.5]{BLR02} for a very similar result).\\

\emph{(ii)} From the first statement, we know that $a.s.$, any weak limit of $(\mutilde^{(n)})_n$ is a solution to $\dot{\nu}=-\nu+\Pi_\nu$. Let $(\mutilde^{(n_k)})_{k\ge1}$ denote such a convergent subsequence of $(\mutilde^{(n)})_{n\ge1}$ and let $\nu^{\infty}$ denote its limit. The process $(\nu^{\infty}_t)_{t\ge0}$ is thus a solution to $\dot{\nu}=-\nu+\Pi_\nu$ starting from $\nu^{\infty}_0$. By \Cref{prop:convuniformemustar}, there exists ${\cal I}_{\nu^{\infty}_0}$ 
such that 
$$\|\nu^{\infty}_t-{\cal I}_{\nu^{\infty}_0}\|_{TV} \xrightarrow{t\rightarrow+\infty}0.$$
Now, for some given $n\ge1$ and $\tau\ge1$, recall that $N(n,\tau):=\max\{k\ge 1, \sum_{\ell=k+1}^n\gamma_{\ell}\le \tau\}$. By construction, there exists a sequence $(\tau_n)_{n\ge1}$ such that
 $$  \tau_n\le \tau\le \tau_n+\gamma_{n+1}\quad \textnormal{and}\quad \mu_n=\mutilde^{(N(n,\tau))}_{\tau_n}.$$
 At the price of extracting again a subsequence, $(\mutilde^{(N(n_k,\tau))})_{k\ge1}$ also converges to a solution to $\dot{\nu}=-\nu+\Pi_\nu$. Denote it by $(\nu^{\infty,\tau}_t)_{t\ge0}$. Since $(\tau_n)_{n\ge1}$ converges to $\tau$ and since the limiting function is continuous, we have:
$$ \nu^{\infty,\tau}_{\tau}=\lim_{n\rightarrow+\infty}\mu_n=\nu_0.$$
By uniqueness of solution, this implies that for $t\ge \tau$,
$$\nu^{\infty,\tau}_{t}=\nu^{\infty}_{t-\tau}.$$
As a consequence, $(\nu^{\infty,\tau}_{t})_{t\ge0}$ has the same limit as $t\rightarrow+\infty$, that is ${\cal I}(\nu_0)$. But since the convergence to ${\cal I}(\nu_0)$ is uniform in the initial condition, for any $\varepsilon$, we can fix $\tau_\varepsilon>0$ such that 
$$\|\nu^{\infty,\tau_{\varepsilon}}_{t}-{\cal I}(\nu_0)\|_{TV}\le \varepsilon\quad \forall t\ge \tau_\varepsilon.$$
Thus, for any $\varepsilon>0$, 
$$\|\nu_0-{\cal I}(\nu_0)\|_{TV}=\|\nu^{\infty,\tau_{\varepsilon}}_{\tau_\varepsilon}-{\cal I}(\nu_0)\|_{TV}\le \varepsilon.$$
Finally, $\nu_0={\cal I}(\nu_0)$ which implies that the process is stationary. As a consequence, $\dot\nu_0=0$, which in turn implies that $\nu_0=\Pi_{\nu_0}$. Thus, $\nu_0$ is a fixed point for the map $\mu\mapsto \Pi_\mu$, \emph{i.e.} a QSD by \cref{lem:characQSD}.


\section{Tightness} \label{sec:proof3}
We proce \Cref{prop:tightnessbis} which provides sufficient conditions for $(\mu_n)$ to be $a.s.$ tight.

\begin{proof}[Proof of  \Cref{prop:tightnessbis}] The proof is divided into two steps.\\

\textit{Step 1.} We prove that $\exists\,\varepsilon_0>0$ such that $\forall\,\varepsilon\in(0,\varepsilon_0]$, $\sup_n \{\ES[\mu_n(V_\varepsilon)]+\ES[V_\varepsilon(X_n)]\}<+\infty.$\\

\noindent By the recursive formula \eqref{eq:munrecursive} and the definition of the dynamics of $(X_n)$, we have:
\begin{equation}\label{eq:coupleunvnstoch}\begin{cases}
\mu_{n+1}(V_\varepsilon)=\mu_n(V_\varepsilon)(1-\gamma_{n+1})+\gamma_{n+1} V_\varepsilon(X_{n+1})\\
\ES[V_\varepsilon(X_{n+1})|{\cal F}_n]=K_{\mu_n,\partial} V_\varepsilon(X_n)+\delta(X_n) \mu_n(V_\varepsilon).
\end{cases}
\end{equation}
Thus, setting   $u_n=\ES[\mu_n(V_\varepsilon)]$ and $v_n=\ES[V_\varepsilon(X_n)]$ (depending on $\varepsilon$ even if the notation does not), we deduce from \ref{condhvdeux} that:
\begin{equation}\label{eq:coupleunvn}
\begin{cases}
u_{n+1}\le u_n(1-\gamma_{n+1})+\gamma_{n+1} v_{n+1}\\
v_{n+1}\le \varepsilon v_n+\beta_\varepsilon+\|\delta\|_{\infty} u_n.
\end{cases}
\end{equation}
Setting $L_n=u_n+\lambda \gamma_n v_n$ where $\lambda>0$, we obtain:
\begin{align*}
L_{n+1}\le u_n\left(1 -(1-(1+\lambda)\|\delta\|_\infty)\gamma_{n+1}\right)+ \lambda \gamma_{n+1} v_n \varepsilon \left(1+\lambda^{-1}\right)+ \beta_{\varepsilon}(1+\lambda)\gamma_{n+1}.
\end{align*}
{Since $\|\delta\|_\infty<1$},  one can fix $\lambda$ sufficiently small such that
$$\rho_\lambda=1-(1+\lambda)\|\delta\|_\infty>0.$$
Then, if $\varepsilon_0=\frac{1}{2}(1+\lambda^{-1})^{-1}$, we get for all $\varepsilon\in(0,\varepsilon_0]$ (using that $(\gamma_n)$ is non-increasing):
\begin{align*}
L_{n+1}&\le u_n\left(1 -\rho_\lambda \gamma_{n+1}\right)+ \frac{1}{2} \lambda \gamma_{n} v_n + C\gamma_{n+1}\\
&\le L_n\left(1 -\rho_\lambda \gamma_{n+1}\right)+C\gamma_{n+1}\quad \forall n\ge n_0,
\end{align*}
where $C$ is a finite constant and,
$$ n_0:=\inf\{n\in\mathbb{N}, 1 -\rho_\lambda \gamma_{n+1}\ge\frac{1}{2}\}.$$
The integer $n_0$ is certainly finite since $(\gamma_n)$ tends to $0$ as $n\rightarrow+\infty$. Then, an induction leads to 
\begin{align}\label{eq:iterexpo}
L_n\le L_{n_0}\prod_{k=n_0+1}^n (1-\rho_\lambda \gamma_k)+ C\sum_{k=n_0+1}^n \gamma_k  \prod_{\ell=k+1}^n (1-\rho_\lambda \gamma_\ell)\nonumber\\
\le L_{n_0}+C \sum_{k\ge n_0+1} \gamma_k \exp(-\rho_\lambda \sum_{\ell=k+1}^n \gamma_k).
\end{align}
In the second line, we used that $\rho_\lambda \gamma_n\le 1/2$ for $n\ge n_0+1$ and the inequality $\log(1+x)\le x$ for $x>-1$. By an integral-series comparison argument, one checks that (with $t_n=\sum_{k=1}^n\gamma_k$):
\begin{equation}\label{eq:argumentexpoff}
\sum_{k\ge n_0+1} \gamma_k \exp\left(-\rho_\lambda \sum_{\ell=k+1}^n \gamma_k\right)\le e^{-\rho_\lambda t_n} \int_0^{t_n} e^{\rho_\lambda s} ds\le \frac{1}{\rho_\lambda}.
\end{equation}
Thus, checking by induction that $L_{n_0}$ is finite, it follows that
\begin{equation}
\sup_{n\ge 1} L_n<+\infty.
\end{equation}
Thus, $\sup_n u_n<+\infty$ and plugging this control into \eqref{eq:coupleunvn}, we deduce that 
$$ v_{n+1}\le \varepsilon v_n+ C$$
and $\sup_n v_n<+\infty$.\\

\noindent \textit{Step 2.} We prove the result. \\

\noindent By Jensen inequality and \ref{condhvdeux}, we have: for all $\varepsilon>0$,
$$ \forall \mu\in{\cal P}(\MM), \quad K_{\mu,\partial}\sqrt{V_\varepsilon}\le \bar{\varepsilon} \sqrt{V_\varepsilon}+\bar{\beta}_\varepsilon\quad\textnormal{with $\bar{\varepsilon}=\sqrt{\varepsilon}$ and
$\bar{\beta}_\varepsilon=\sqrt{{\beta}_\varepsilon}$.}  $$
Now, by \eqref{eq:coupleunvnstoch} applied with ${\cal V}=\sqrt{V_\varepsilon}$,
\begin{equation}\label{eq:techninc}
\mu_{n+1}({\cal V})\le \mu_n({\cal V})(1-\gamma_{n+1}(1-\|\delta\|_\infty))+\bar{\varepsilon} \gamma_{n+1} {\cal V}(X_n)+\gamma_{n+1} \bar{\beta}_\varepsilon+\gamma_{n+1}\Delta M_{n+1},
\end{equation}
with 
$$
\Delta M_{n+1}={\cal V}(X_{n+1})-\ES[{\cal V}(X_{n+1})|{\cal F}_n]. 
$$
For the sake of readability, we first provide a simple proof case when the assumption holds with $\varepsilon=0$ ($i.e.$ when there is an inf-compact function $V_0$ such that $\sup_{\mu,x} K_{\mu,\partial} V_0(x)<+\infty$).

\noindent \textit{Case $\varepsilon=0$.} Set $\rho=1-\|\delta\|_\infty$ and  $n_0=\inf\{n, \rho\gamma_n\le \frac{1}{2}\}$. An induction leads to 
$$\mu_n({\cal V})\le \mu_n({\cal V})\prod_{k=n_0+1}^{n}(1-\rho \gamma_k)+C \sum_{k=n_0+1}^n \gamma_k \Theta_{n,k}+\sum_{k=n_0+1}^n \gamma_k \Delta M_{k}\Theta_{n,k},$$
with $\rho=1-\|\delta\|_\infty$ and,
$$\Theta_{n,k}=\prod_{\ell=k+1}^n (1-\rho\gamma_k).$$
As in the first part of the proof, we have
$$\forall\; k\in\llbracket n_0, n\rrbracket,\quad \Theta_{n,k}\le e^{-\rho (t_n-t_k)},$$
so that 
$$ \sup_{n\ge n_0+1} \sum_{k=n_0+1}^n \gamma_k \Theta_{n,k}<+\infty.$$
Finally, to prove that 
$$\sup_{n\ge n_0+1} \sum_{k=n_0+1}^n \gamma_k \Delta M_{k}\Theta_{n,k} <+\infty \quad a.s.,$$
we apply \Cref{lem:kron}  with $b_n=e^{\rho t_n}$ and $u_n=\gamma_n \Delta M_n$ and thus, check that the martingale $(N_n)$ defined by
$$ N_n= \sum_{k=1}^n \gamma_k \Delta M_{k}, \quad n\ge 1,$$
is convergent.  But, 
$$\langle N\rangle_n\le \sum_{k=1}^n\gamma_k^2 V_0(X_k),$$
so that by Step $1$, 
$$\ES[\langle N\rangle_\infty]\le C\sum_{k\ge1}\gamma_k^2<+\infty.$$
This concludes the proof.

\noindent \textit{Case $\varepsilon>0$.} We now consider the real assumption \ref{condhvdeux}. We go back to \eqref{eq:techninc} and choose to iterate the control of  ${\cal V}(X_n)$ induced by \eqref{eq:coupleunvnstoch}:
\begin{align*}
\bar{\varepsilon} {\cal V}(X_n)&\le \bar{\varepsilon}^2 {\cal V}(X_{n-1})+ \bar{\varepsilon} \Delta M_n+ \bar{\varepsilon}\bar{\beta}_\varepsilon+\bar{\varepsilon} \mu_{n-1}({\cal V})\\
&\le \bar{\varepsilon}^n {\cal V}(X_0)+ \sum_{k=1}^{n-1}  \bar{\varepsilon}^{n-k}  \left(\Delta M_{k+1}+ \bar{\beta}_\varepsilon+ \mu_{k}({\cal V})\right).
\end{align*}
Hence, plugging into  \eqref{eq:techninc} and setting 
$$\nu_n({\cal V})=\sum_{k=1}^{n-1} \bar{\varepsilon}^{n-1-k} \mu_{k}({\cal V}),$$
we get:
$$
\begin{cases}
\mu_{n+1}({\cal V})\le \mu_n({\cal V})(1-\gamma_{n+1}(1-\|\delta\|_\infty))+ \gamma_{n+1} \left[\bar{\varepsilon}\nu_n({\cal V})+C_\varepsilon+\sum_{k=1}^{n}  \bar{\varepsilon}^{n-k}  \Delta M_{k+1}\right]\\
\nu_{n+1}({\cal V})\le \bar{\varepsilon} \nu_n({\cal V})+\mu_n({\cal V}),
\end{cases}
$$
with $C_\varepsilon={\cal V}(X_0)+(1-\bar{\varepsilon})^{-1}\bar{\beta}_\varepsilon$. Setting $\tilde{L}_n=\mu_{n}({\cal V})+\lambda \gamma_n \nu_{n}({\cal V})$ (where $\lambda$ will be calibrated further), one checks that 
\begin{align*}
 \tilde{L}_{n+1}&\le (1-\gamma_{n+1}(1-\|\delta\|_\infty-\lambda))\mu_n({\cal V})+\lambda \gamma_{n+1}\left(\frac{\bar{\varepsilon}}{\lambda}+\bar{\varepsilon}\right)\nu_n({\cal V})\\
 &+\gamma_{n+1}
\left( C_\varepsilon+ \sum_{k=1}^{n}  \bar{\varepsilon}^{n-k}  \Delta M_{k+1}\right).
\end{align*}
Fix $\lambda$ small enough in such a  way that $\tilde{\rho}_\lambda=1-\|\delta\|_\infty-\lambda>0$. Then choosing $\bar{\varepsilon}$ such that $\frac{\bar{\varepsilon}}{\lambda}+\bar{\varepsilon}<1$, $i.e.$ such that ($\bar{\varepsilon}=\sqrt{\varepsilon}$),
\begin{equation}\label{secondcondeps}
\varepsilon \le \tilde{\varepsilon}_0=(1+\lambda^{-1})^{-2},
\end{equation}
leads to the existence of an integer $\tilde{n}_0$ such that for all $n\ge\tilde{n}_0$,
 $$ \tilde{L}_{n+1}\le(1-\tilde{\rho}_\lambda\gamma_{n+1})\tilde{L}_n+\gamma_{n+1}
\left( C_\varepsilon+ \sum_{k=1}^{n}  \bar{\varepsilon}^{n-k}  \Delta M_{k+1}\right).$$
At the price of replacing $\tilde{n}_0$ by a greater integer, we assume that $\tilde{\rho}_\lambda \gamma_n\le 1/2$ for $n\ge \tilde{n}_0$. Then, a similar induction as in \eqref{eq:iterexpo} leads to:
 \begin{align*}
\tilde{L}_n\le \tilde{L}_{\tilde{n}_0}\exp(-\rho_\lambda (t_n-t_{\tilde{n}_0}))
+ \sum_{k= \tilde{n}_0+1}^n \gamma_k
\left( C_\varepsilon+ \sum_{\ell=1}^{k}  \bar{\varepsilon}^{k-\ell}  \Delta M_{\ell+1}\right) \exp(-\rho_\lambda(t_n-t_{k})),
\end{align*}
where $t_n=\sum_{k=1}^n\gamma_k$. On the one hand, a similar argument as the one of \eqref{eq:argumentexpoff} leads to 
$$\sup_{n\ge\tilde{n}_0+1}\sum_{k= \tilde{n}_0+1}^n \gamma_k C_\varepsilon \exp(-\rho_\lambda(t_n-t_{k}))<+\infty\quad a.s.$$
One the other, we want to show that,
\begin{equation}
\sup_{n\ge\tilde{n}_0+1}e^{-\rho_\lambda t_n}\sum_{k= \tilde{n}_0+1}^n \gamma_ke^{\rho_\lambda t_k} \sum_{\ell=1}^{k} \bar{\varepsilon}^{k-\ell}\Delta M_{\ell+1} <+\infty\quad a.s.
\end{equation}
By \Cref{lem:kron} applied with $b_n= e^{\rho t_n}$ and $u_n= \gamma_n \sum_{\ell=1}^{n} \bar{\varepsilon}^{n-\ell} \Delta M_{\ell+1}$,  it is enough to show that, 
\begin{equation*}
\sum \left(\gamma_k \sum_{\ell=1}^{k} \bar{\varepsilon}^{k-\ell} \Delta M_{\ell+1} \right)\quad \textnormal{is  $a.s.$ a convergent series.}
\end{equation*}
Note that we can assume without loss of generality that $\tilde{n}_0=0$. Then, let us remark that 
\begin{align}\label{eq:diffofmartingales}
\sum_{k=1}^n \gamma_k \sum_{\ell=1}^{k} \bar{\varepsilon}^{k-\ell} \Delta M_{\ell+1}&=\sum_{\ell=1}^{n} \Delta M_{\ell+1}\big(\sum_{k=\ell}^n\gamma_k \bar{\varepsilon}^{k-\ell}\big),\nonumber\\
&=\underbrace{\sum_{\ell=1}^{n} \left(\sum_{k=\ell}^{+\infty} \gamma_k  \bar{\varepsilon}^{k-\ell}\right)\Delta M_{\ell+1}}_{\tilde{N}_n}-\left(\sum_{k=n+1}^{+\infty}\gamma_k \bar{\varepsilon}^{k}\right)\sum_{\ell=1}^{n} \bar{\varepsilon}^{-\ell}\Delta M_{\ell+1}.
\end{align}
Note that in the above equation, we provided a decomposition which allows to replace a triangular array of martingale increments by the difference of a martingale and a simpler triangular array than the previous one. For the martingale term $(\tilde{N}_n)$, recall by Step $1$ that if $\varepsilon\le \varepsilon_0$, $\sup_{n\ge1}\ES[V_\varepsilon(X_n)]<+\infty$ so that for every $\varepsilon\in(0,\varepsilon_0]$, 
$$\sup_{n\ge 1}\ES[(\Delta M_{n})^2]\le \sup_{n\ge 1}\ES[V_\varepsilon(X_n)]+\infty.$$
Then, since $(\gamma_n)$ is non-increasing, we obtain
$$\ES[\langle \tilde{N}\rangle_\infty]\le  C\sum_{\ell=2}^{+\infty} \left(\sum_{k=\ell}^{+\infty} \gamma_k \bar{\varepsilon}^{k-\ell}\right)^2\le \sum_{\ell\ge 2} \frac{\gamma_\ell^2}{(1-\bar{\varepsilon})^2}<+\infty,$$
which involves that $(\tilde{N}_n)$ $a.s.$ converges. Let us finally consider the second term of \eqref{eq:diffofmartingales}. Setting 
$b_n=\left(\sum_{k=n}^{+\infty}\gamma_k \varepsilon^{k}\right)^{-1}$ (which clearly goes to $\infty$) and $u_n= \frac{1}{b_n}\varepsilon^{-n}\Delta M_{n+1}$, we remark that 
$$ \sum_{\ell=2}^{n} u_\ell= \tilde{N}_n,$$
so that the convergence of the series follows from what preceeds.  Once again, by \Cref{lem:kron}, we derive that 
$$
\left(\sum_{k=n+1}^{+\infty}\gamma_k \varepsilon^{k}\right)\sum_{\ell=1}^{n} \varepsilon^{-\ell}\Delta M_{\ell+1}\xrightarrow{n\rightarrow+\infty}0,\quad a.s.
$$
As a conclusion, if $\varepsilon\le \varepsilon_0\wedge \tilde{\varepsilon}_0$ (where $\varepsilon_0$ and $\tilde{\varepsilon}_0$ are respectively defined in Step $1$ and in \eqref{secondcondeps}), the conclusion of the Proposition holds true.
\end{proof}
For the sake of completeness, we recall below the classical Kronecker Lemma used several times in the above proof.
\begin{lem}[Kronecker Lemma] \label{lem:kron}Let $(u_n)$ and $(b_n)$ be two real sequences. Assume $\sum u_n$ is convergent and that $(b_n)$ is positive with $\lim_{n\rightarrow+\infty} b_n=+\infty$. Then, $b_n^{-1}\sum_{k=1}^n b_k u_k\xrightarrow{n\rightarrow+\infty}0.$
\end{lem}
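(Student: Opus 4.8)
The plan is to prove this by summation by parts (Abel summation), which reduces the claim to a Toeplitz-type averaging argument; I will take $(b_n)$ to be non-decreasing, as is the case in every application in this paper (where $b_n=e^{\rho t_n}$ with $t_n$ increasing) and as is in fact needed for the statement to hold. Set $S_n=\sum_{k=1}^n u_k$ with the convention $S_0=0$, and let $S=\lim_n S_n$, which exists by hypothesis. Writing $u_k=S_k-S_{k-1}$ and summing by parts,
\[
\sum_{k=1}^n b_k u_k = b_n S_n - \sum_{k=1}^{n-1}(b_{k+1}-b_k)S_k,
\]
hence
\[
\frac{1}{b_n}\sum_{k=1}^n b_k u_k = S_n - \frac{1}{b_n}\sum_{k=1}^{n-1}(b_{k+1}-b_k)S_k .
\]
Since $S_n\to S$, it is enough to show that the second term also converges to $S$, for then the difference tends to $0$.

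For that step I would use that the coefficients $(b_{k+1}-b_k)/b_n$ are non-negative and sum to $(b_n-b_1)/b_n\to 1$. Write
\[
\frac{1}{b_n}\sum_{k=1}^{n-1}(b_{k+1}-b_k)S_k - S = \frac{1}{b_n}\sum_{k=1}^{n-1}(b_{k+1}-b_k)(S_k-S) - \frac{b_1}{b_n}S ,
\]
where the last term vanishes because $b_n\to+\infty$. For the first term, given $\varepsilon>0$ fix $N$ with $|S_k-S|\le\varepsilon$ for all $k>N$; splitting the sum at $N$, the head $\frac{1}{b_n}\sum_{k\le N}(b_{k+1}-b_k)|S_k-S|$ has a fixed numerator and hence tends to $0$ as $n\to\infty$, while the tail is bounded by $\varepsilon\,\frac{1}{b_n}\sum_{k>N}(b_{k+1}-b_k)\le\varepsilon$. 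Letting $n\to\infty$ and then $\varepsilon\to0$ gives the convergence, and combining with the display above concludes the proof.

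There is no genuine obstacle here: this is the classical Kronecker lemma, and the only point requiring a little care is the monotonicity of $(b_n)$, which is used to guarantee that $(b_{k+1}-b_k)/b_n$ is a non-negative averaging array; without it the statement fails in general. Since every use of the lemma in the preceding proofs is with $b_n=e^{\rho t_n}$ (and with $\rho>0$, $t_n\uparrow\infty$), assuming $(b_n)$ non-decreasing costs nothing, and I would simply phrase the lemma accordingly.
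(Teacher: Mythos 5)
Your proof is correct, and it is the standard argument: Abel summation to write $b_n^{-1}\sum_{k\le n}b_ku_k=S_n-b_n^{-1}\sum_{k<n}(b_{k+1}-b_k)S_k$, followed by a Toeplitz-type averaging step showing the weighted sum converges to $S=\lim S_n$. The paper itself states the lemma without proof (it is recalled as classical background), so there is nothing to compare against on that side; your write-up would serve as a complete proof.

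Your remark about monotonicity is well taken and is the one substantive point. As printed, the lemma assumes only that $(b_n)$ is positive with $b_n\to+\infty$, and under these hypotheses alone the conclusion can fail (take, e.g., $u_{2k}=1/k$, $u_{2k+1}=-1/k$, $b_{2k}=k^2$, $b_{2k+1}=k$: the series $\sum u_n$ converges, $b_n\to\infty$, but $b_n^{-1}\sum_{k\le n}b_ku_k$ does not tend to $0$). The classical statement requires $(b_n)$ non-decreasing, which is exactly what your proof uses to ensure the weights $(b_{k+1}-b_k)/b_n$ form a non-negative array summing to $(b_n-b_1)/b_n\to1$. In every application in the paper the sequence is $b_n=e^{\rho t_n}$ with $t_n$ increasing, or $b_n=\bigl(\sum_{k\ge n}\gamma_k\bar{\varepsilon}^k\bigr)^{-1}$, both non-decreasing, so adding this hypothesis costs nothing and makes the statement accurate.
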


\section{Euler schemes of McKean-Vlasov SDEs: proofs and additional results}\label{sec:proof4}
 This section is related to Euler schemes of McKean-Vlasov SDEs. We begin by a concrete criterion which ensures  \ref{condhvdeux} for these dynamics (possibly with jumps) and then focus on the proofs of \Cref{prop:model1} and \Cref{prop:model2}.
 

\subsection{About \ref{condhvdeux}} 
\begin{lem}\label{lem:hvdeux} Assume that $\MM$ is a closed subset of $\ER^d$. Let $(\zeta_t)_{t\ge0}$ be a centered L\'evy process on $\ER^d$ and $h$ denote the step of the Euler scheme. Assume that $(x,\mu)\mapsto \sigma(x,\mu)$ is bounded (on $\MM\times{\cal P}(\MM)$). Then, \ref{condhvdeux} holds in the two following situations.
\begin{itemize}
\item[(i)](Standard Euler scheme)  The dynamics is given by \eqref{eq:formegeneralenoyau}, the step $h$ is small enough, $\zeta_1$ has moments of any order and some positive $\alpha$ and $C$ exist such that
$$ \langle x, b(x,\mu)\rangle \le C-\alpha |x|^2\quad\textnormal{and}\quad |b(x,\mu)|^2\le C(1+|x|^2) $$
\item[(ii)](Modified Euler scheme) The dynamics is given by \eqref{eq:formegeneralenoyautronc}, there exists $\alpha>0$ such that $\ES[|\zeta_h|^\alpha]<+\infty$ and Condition \ref{condhtronc} holds.
 \end{itemize}
\end{lem}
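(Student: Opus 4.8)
The plan is to exhibit, for each $\varepsilon>0$, an inf-compact Lyapunov function of the form $V_\varepsilon(x)=1+|x|^{p_\varepsilon}$ (restricted to $\MM$), where the exponent $p_\varepsilon$ is chosen large. The underlying idea is that for the Euler kernel $|\Upsilon_x^h|$ behaves like $\sqrt{\rho}\,|x|$ at infinity for some $\rho<1$, so $|\Upsilon_x^h|^{p}$ behaves like $\rho^{p/2}|x|^{p}$, and $\rho^{p/2}$ can be made smaller than any prescribed $\varepsilon$ by enlarging $p$. Since $0\le K_{\mu,\partial}^{(h)}V_\varepsilon(x)\le \ES[V_\varepsilon(\Upsilon_x^h)]$ (the killing indicator only decreases the quantity), it is enough to control $\ES\big[|\Upsilon_x^h|^{p}\big]$; all constants below depend only on $h,\alpha,C,\|\sigma\|_\infty$ and on moments of $\zeta$, hence the bound will be uniform in $\mu\in{\cal P}(\MM)$.

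\textbf{Case (i).} First I would control the deterministic part $m_x:=x+hb(x,\mu)$: combining $\langle x,b(x,\mu)\rangle\le C-\alpha|x|^2$ with $|b(x,\mu)|^2\le C(1+|x|^2)$ gives $|m_x|^2\le(1-2h\alpha+Ch^2)|x|^2+c_0$, so that for $h$ small enough $\rho:=1-2h\alpha+Ch^2<1$ and $|m_x|\le\sqrt{\rho}\,|x|+c_1$ with $c_1=\sqrt{c_0}$. Hence $|\Upsilon_x^h|\le\sqrt{\rho}\,|x|+c_1+\|\sigma\|_\infty|\zeta_h|$. Fixing $\rho'\in(\rho,1)$, one has $\sqrt{\rho}\,|x|+c_1\le\sqrt{\rho'}\,|x|$ (and $\ge1$) for $|x|$ large; then a binomial expansion, together with the finiteness of all moments of $\zeta_h$ (which follows from the hypothesis on $\zeta_1$), yields
$$\ES\big[|\Upsilon_x^h|^{p}\big]\le (\rho')^{p/2}|x|^{p}+C_p\,|x|^{p-1},\qquad |x|\ \text{large},$$
with $C_p=\sum_{k=1}^p\binom{p}{k}\ES\big[(\|\sigma\|_\infty|\zeta_h|)^k\big]$. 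Given $\varepsilon>0$, I pick $p=p_\varepsilon$ with $(\rho')^{p/2}\le\varepsilon/2$; since $C_p|x|^{p-1}=o(|x|^{p})$ it is $\le\tfrac{\varepsilon}{2}|x|^{p}$ outside a bounded set, and on that bounded set $\ES[|\Upsilon_x^h|^{p}]$ is bounded by a constant. This gives $\ES[|\Upsilon_x^h|^{p_\varepsilon}]\le\varepsilon|x|^{p_\varepsilon}+\beta_\varepsilon$ for all $x$, hence $K_{\mu,\partial}^{(h)}V_\varepsilon\le\varepsilon V_\varepsilon+\beta_\varepsilon$ with $V_\varepsilon(x)=1+|x|^{p_\varepsilon}$, which is inf-compact on the closed set $\MM$ (after adjusting $\beta_\varepsilon$).

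\textbf{Case (ii).} The truncation makes this immediate: by \ref{condhtronc}, $M:=\sup_{x,\mu}|\tronc(x+hb(x,\mu))|<\infty$, so $|\tilde{\Upsilon}_x^h|\le M+\|\sigma\|_\infty|\zeta_h|$ \emph{uniformly} in $(x,\mu)$. Taking $V_\varepsilon(x)=1+|x|^{\alpha}$ (the same function for every $\varepsilon$, inf-compact on $\MM$), the hypothesis $\ES[|\zeta_h|^\alpha]<\infty$ gives
$$\tilde{K}_{\mu,\partial}^{(h)}V_\varepsilon(x)\le \ES\big[1+|\tilde{\Upsilon}_x^h|^{\alpha}\big]\le 1+c_\alpha\big(M^\alpha+\|\sigma\|_\infty^\alpha\,\ES[|\zeta_h|^\alpha]\big)=:\beta<\infty$$
uniformly in $x\in\MM$ and $\mu\in{\cal P}(\MM)$; since $\varepsilon V_\varepsilon\ge0$ this is \emph{a fortiori} $\le\varepsilon V_\varepsilon(x)+\beta$, so \ref{condhvdeux} holds with $\beta_\varepsilon=\beta$ for every $\varepsilon$.

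The only genuinely delicate point is Case (i): converting the one-step near-contraction of the second moment — whose ratio $\rho$ is merely $<1$ and cannot be made small — into the much stronger requirement $K_{\mu,\partial}^{(h)}V_\varepsilon\le\varepsilon V_\varepsilon+\beta_\varepsilon$ with arbitrarily small $\varepsilon$. This forces $p_\varepsilon\to\infty$ as $\varepsilon\downarrow0$, and one must then check that the lower-order remainder $C_{p_\varepsilon}|x|^{p_\varepsilon-1}$, whose constant blows up with $p_\varepsilon$, is still absorbed, uniformly in $\mu$; this is precisely what dictates the choice of $\rho'\in(\rho,1)$ and the use of moments of $\zeta$ of every order.
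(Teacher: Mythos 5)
Your proof is correct and follows essentially the same route as the paper: for (i) a one-step drift contraction of $|x|$ (the paper works with $|x|^2$ via a second-order expansion) is raised to a high power $p_\varepsilon$ so that the contraction constant beats any prescribed $\varepsilon$, with the cross terms absorbed using all moments of $\zeta_h$ (the paper packages this in the elementary inequality $(u+v)^p\le(1+\delta)u^p+C_{p,\delta}v^p$, you via a binomial expansion and a choice of $\rho'\in(\rho,1)$); for (ii) the paper likewise takes $V(x)=|x|^\alpha$ and notes that boundedness of $\tronc$ and $\sigma$ gives a bound on $K_{\mu,\partial}V$ that is uniform in $(x,\mu)$, which trivially yields \ref{condhvdeux}.
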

\begin{Rq} $\rhd$ In this discrete-time setting, we prefer to work with closed subsets which are ``friendlier'' for inf-compactness (for instance, $x\mapsto x$ is not inf-compact on ${D}=(0,+\infty)$ but is on $\bar{D}=(0,+\infty)$).\\

$\rhd$ The boundedness condition on $\sigma$ may be alleviated.\\

$\rhd$ The condition in $(i)$ is eventually satisfied when $(\zeta_t)_{t\ge0}$ is a Brownian motion (and more generally when the large jumps of the L\'evy process have moments of any order). Let us also recall that for a Lévy process, $\zeta_1$ has moments of any order is equivalent to: $\forall t\ge0$, $\zeta_t$ has moments of any order. 

\end{Rq}
\begin{proof} \emph{(i)} We set $V(x)=|x|^{2}$ 
 and assume that $Y_0=x$. By the second order Taylor formula and the assumptions on the coefficients, we have
$$V(Y_1)1_{\tau>1}\le V(x)(1-2\alpha h)+2 \langle x, \zeta_h\rangle+c\left(h^2 V(x)+|\zeta_h|^2\right),$$
where $c$ is independent of $h$. By the elementary inequality $|\langle u, v\rangle |\le \frac{1}{2}(\rho |u|^2+\rho^{-1} |v|^2)$ for $u,v\in\ER^d$ and $\rho>0$,
$$2 \langle x, \zeta_h\rangle\le \frac{\alpha h}{2} V(x)+\frac{2}{\alpha h} |\zeta_h|^2.$$
Hence for $h\le h_0:=\frac{\alpha}{2c}$, a finite $C_h$ exists such that
$$V(Y_1)1_{\tau>1}\le V(x)(1-{\alpha} h)+C_h(1+|\zeta_h|^2).$$
Let $\varepsilon>0$ and set 
$$ p_\varepsilon=\inf\left\{p\ge 1, (1-{\alpha} h)^p\le \frac{\varepsilon}{2}\right\}.$$
By \Cref{lem:inegelem} applied with $\delta=1$, it follows that
$$V^{p_\varepsilon}(Y_1)1_{\tau>1}\le \varepsilon V^{p_\varepsilon}(x)+C_{h,p}(1+|\zeta_h|^{2p_\varepsilon}).$$
The result follows.

\noindent \emph{(ii)} Set $V(x)=|x|^\alpha$. We have 
\begin{align*}
 \ES_x[V(Y_1) 1_{\tau>1}]\le |\tronc(x+h b(x,\mu))|^{\alpha}+\ES[|\xi_h|^{\alpha}1_{\tau>1}]
 \le  C<+\infty,
 \end{align*}
 Thus,
 $$\sup_{x\in\MM} K_{\mu,\partial} V(x)\le C,$$
 which in turn trivially implies \ref{condhvdeux}.
  \end{proof}
  \begin{lem}\label{lem:inegelem} For every $p\ge1$, for every $\delta>0$, there exists a finite constant $C_{p,\delta}$ such that for any $u,v\ge0$,
  $$(u+v)^p\le (1+\delta) u^p+ C_{p,\delta} v^p.$$
  \end{lem}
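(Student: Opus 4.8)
The plan is to reduce the claim to a one-variable estimate by homogeneity and then close it with an elementary calculus argument; I will also indicate a quicker route via convexity that even yields an explicit constant. First I would note that the inequality is positively homogeneous of degree $p$ in $(u,v)$ and is trivially true when $v=0$ (and $\delta>0$). So it suffices to treat $v>0$; dividing by $v^p$ and setting $t=u/v\ge 0$, the statement becomes the existence of a finite $C_{p,\delta}$ with $(1+t)^p\le (1+\delta)\,t^p+C_{p,\delta}$ for all $t\ge 0$, i.e.\ the continuous function
$$\phi(t):=(1+t)^p-(1+\delta)\,t^p$$
is bounded above on $[0,\infty)$.

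To see this, I would write $\phi(t)=t^p\big((1+1/t)^p-(1+\delta)\big)$ for $t>0$; since $p\ge 1$ and $\delta>0$, the bracket tends to $1-(1+\delta)=-\delta<0$ as $t\to+\infty$, so $\phi(t)\to-\infty$. Together with continuity on $[0,\infty)$ and $\phi(0)=1$, this gives that $M:=\sup_{t\ge 0}\phi(t)$ is finite (and $\ge 1>0$), and one takes $C_{p,\delta}=M$. Rescaling back by $v^p$ yields the lemma.

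For an explicit constant, I would instead invoke the convexity of $x\mapsto x^p$ on $\ER_+$ (valid since $p\ge 1$): for any $\theta\in(0,1)$, Jensen's inequality with weights $\theta,1-\theta$ applied to the points $u/\theta$ and $v/(1-\theta)$ gives
$$(u+v)^p=\Big(\theta\,\tfrac{u}{\theta}+(1-\theta)\,\tfrac{v}{1-\theta}\Big)^p\le \theta^{1-p}u^p+(1-\theta)^{1-p}v^p.$$
For $p>1$ one chooses $\theta=(1+\delta)^{-1/(p-1)}$, which lies in $(0,1)$ precisely because $1+\delta>1$, so that $\theta^{1-p}=1+\delta$ and the inequality holds with $C_{p,\delta}=(1-\theta)^{1-p}=\big(1-(1+\delta)^{-1/(p-1)}\big)^{1-p}$; for $p=1$ the estimate is immediate with $C_{1,\delta}=1$. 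There is no genuine obstacle in the proof: the only points needing a line of care are the limiting behaviour $\phi(t)\to-\infty$ (which uses $\delta>0$) in the first approach and the membership $\theta\in(0,1)$ (which again uses $\delta>0$) in the second.
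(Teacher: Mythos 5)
Your proof is correct, but it proceeds differently from the paper's. The paper applies the first-order Taylor formula to $x\mapsto x^p$ to get $(u+v)^p\le u^p+p\,2^{p-1}(u^{p-1}v+v^p)$ and then absorbs the cross term $u^{p-1}v$ into $\delta u^p+C v^p$ via Young's inequality with exponents $p$ and $p/(p-1)$, calibrating the parameter $\rho$ so that the coefficient of $u^p$ equals $\delta$. You instead either (a) reduce by homogeneity to the boundedness of $t\mapsto(1+t)^p-(1+\delta)t^p$ on $[0,\infty)$, which is a soft compactness-type argument giving a finite but non-explicit constant, or (b) use convexity of $x\mapsto x^p$ with weights $\theta,1-\theta$ and the choice $\theta=(1+\delta)^{-1/(p-1)}$, which yields the explicit constant $C_{p,\delta}=\bigl(1-(1+\delta)^{-1/(p-1)}\bigr)^{1-p}$ in a single line (with the trivial case $p=1$ treated separately, which also sidesteps the degenerate exponent $p/(p-1)$ that the paper's Young-inequality step would encounter at $p=1$). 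Both of your routes are complete and arguably tidier than the paper's Taylor-plus-Young decomposition; the convexity version in particular delivers a sharper, fully explicit constant, while the paper's argument and yours all remain at the same elementary level, so nothing is lost either way.
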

  \begin{proof}
  By the first order Taylor formula applied to $x\mapsto x^p$ on $\ER_+$,
  $$(u+v)^p= u^p+p(u+\theta v)^{p-1} v\le u^p+ p 2^{p-1} u^{p-1} v+p2^{p-1} v^p.$$
  Then, by the Young inequality  applied with $\bar{p}= p $ and $\bar{q}= \frac{p}{p-1}$
  $$ v u^{p-1} =(\rho^{1-p} v) (\rho u)^{p-1}\le \frac{\rho^{p-p^2}}{p} v^p+ \frac{(p-1)\rho^p}{p} u^p,\quad \rho>0.$$
  Taking $\rho$ such that $\frac{(p-1)\rho^p}{p}=\delta$, the result follows.

  \end{proof}

\subsection{Proof of \Cref{prop:model1}$(i)$}\label{proof:model1}

\begin{proof}
We apply \Cref{theo:discret}. Since $\bar{D}$ is compact,  $(\mu_n^h)_{n\ge1}$ is certainly almost surely tight  and we thus only have to check Assumptions $\mathbf{(A_{ i})}$, $i=0,\ldots,4$. In order to alleviate the notations, we write $K_{\mu,\partial}$ for $K_{\mu,\partial}^{(h)}$ and $g$ instead of $g_h$. We have
\begin{align}
K_{\mu,\partial} f(x)&=\ES[f(x+h b(x,\mu)+\sigma(x,\mu) \zeta_h)1_{\{x+h b(x,\mu)+\sigma(x,\mu) \zeta_h\in \bar{D}\}}]\nonumber\\
&=\int f(u) {\bf 1}_{\{u\in \bar{D}\}}g\left(\varphi_{x,\mu}(u)\right)|{\rm det} (\sigma(x,\mu))|^{-1}\lambda_d(du),\label{eq:29090}
\end{align}
where $\varphi_{x,\mu}(u)=\sigma(x,\mu)^{-1}(u-x-hb(x,\mu))$.  Set 
\begin{equation}\label{eq:compactKK}
K=\overline{\bigcup_{(x,\mu)\in\bar{D}\times{\cal P}(\bar{D})} \varphi_{x,\mu}(\bar{D})}.
\end{equation}
By $\mathbf{(H_{MV}})$, one can check that the function $(x,\mu)\mapsto \sup_{u\in \bar{D}} |\varphi_{x,\mu}(u)|$ is bounded so that the set $K$ is a compact subset of $\ER^d$.
It follows that
\begin{equation}\label{eq:minomajoexample}
 c_1 \lambda_d(D)\Psi(.)\le  K_{\mu,\partial}(x,.)\le c_2 \lambda_d(D) \Psi(.),
 \end{equation}
with 
{$$ c_1=\inf_{z\in K} g(z)\inf_{x,\mu}|\det(\sigma(x,\mu))|^{-1},\quad  c_2=\sup_{z\in K} g(z)\sup_{x,\mu}|\det(\sigma(x,\mu))|^{-1},$$
and $\Psi$ denotes the uniform distribution on $D$. Under the assumptions, $g$ is continuous and positive on $\ER^d$, and $(x,\mu)\mapsto|\det (\sigma(x,\mu))|^{-1}$ is bounded (uniform ellipticity uniformly in the measure argument)  so that $c_1$ and $c_2$ are positive finite constants.} Thus, \eqref{eq:minomajoexample} corresponds to the assumption of \Cref{lem:lowerupper}, which thus involves \ref{condhquatre}. The assumptions \ref{condhzero} and \ref{condhtrois} are also clearly true under \eqref{eq:minomajoexample}. For \ref{condhun}, the Feller property is obvious. Since \ref{condhdeux} implies the second part of \ref{condhun}, we now only need to prove \ref{condhdeux}:\\

Set $\mu_t=(1-t)\mu_0+t\mu_1$ and assume that $\zeta_h$ has density $g$. For every bounded measurable $f:\MM\rightarrow\ER$,
\begin{align}
K_{\mu_1,\partial} f(x)-&K_{\mu_0,\partial} f(x)= \int_0^1\partial_t (K_{\mu_t,\partial} f(x)) dt\nonumber\\
&=\int_0^1 \int f(u) 1_{\{u\in D\}} \partial_t\left(g(\sigma_t^{-1}(u-m_t)) |{\rm det}(\sigma_t)|^{-1} \right) \label{eq:gsifm}
\lambda_d(du) dt
\end{align}
where $\sigma_t=\sigma(x,\mu_t)$ and $m_t=x+ h b(x,\mu_t)$.
By \Cref{lem:controldensity} and the fact that $K$ defined in \eqref{eq:compactKK} is compact (so that $g$ and $\nabla g$ are bounded on $K$), we deduce that a finite constant $C$ exists such that for all $x$ and $f$,
\begin{align*}
|K_{\mu_1,\partial} f(x)-K_{\mu_0,\partial} f(x)|\le C\|f\|_\infty\|\mu_1-\mu_0\|_{TV}, 
\end{align*}
which in turn implies \ref{condhdeux}. By \Cref{theo:discret}, we can thus conclude that every weak limit $\mu_h^\star$ is a QSD for $(K_{\mu,\partial}^{(h)})_{\mu\in{\cal P}(\bar{D})}$.

\noindent {By the very definition of a QSD,  for every bounded measurable function $f:\bar{D}\rightarrow\ER$, 
\begin{equation}\label{eq:defQSD}
    \int {K}_{\mu,\partial} f(x)\mu^{\star}_{h}(dx)=\rho_{h}\langle\mu^{\star}_{h},f\rangle.
\end{equation}
with $\rho_{h}=1-\mu^{\star}_{h}(\delta_{\mu^{\star}_{h}})\in(0,1)$ (the extinction rate). Thus, for $f={\bf 1}_{\partial D}$,  this yields:
$$\mu^{\star}_{h}(\partial D)=\rho_h^{-1}  \int{K}_{\mu,\partial}(x,\partial D)\mu^{\star}_{h}(dx)=0,$$
since ${K}_{\mu,\partial}(x,\partial D)=0$ for all $x\in\bar{D}$ (this point is obvious since ${K}_{\mu,\partial}(x,.)$ has a density with respect to Lebesgue measure under the assumptions).}
\end{proof}
\begin{lem}\label{lem:controldensity} Assume \ref{condhmv}. Assume that $g$ is ${\cal C}^1$. Then, 
$$|\partial_t\left(g(\sigma_t^{-1}(u-m_t)) |{\rm det}(\sigma_t)|^{-1}\right)|\le C \|\mu_{1}-\mu_{0}\|_{TV}\left(
|\nabla g(\sigma_t^{-1}(u-m_t)|+g(\sigma_t^{-1}(u-m_t)|)\right).$$
\end{lem}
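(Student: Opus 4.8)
The plan is a straightforward chain-rule computation; the content is that every factor depending on $t$ has a $t$-derivative bounded by $\|\mu_1-\mu_0\|_{TV}$, thanks to $\mathbf{(H_{MV})}$. Write $w_t:=\sigma_t^{-1}(u-m_t)$, so the quantity to differentiate is $g(w_t)\,|\det\sigma_t|^{-1}$, with $\sigma_t=\sigma(x,\mu_t)$, $m_t=x+hb(x,\mu_t)$ and $\mu_t=(1-t)\mu_0+t\mu_1$. First I would observe that, for fixed $x$, the maps $r\mapsto b(x,\mu_r)$ and $r\mapsto\sigma(x,\mu_r)$ (where $\mu_r=(1-r)\mu_0+r\mu_1$) are $\mathcal{C}^1$ on $[0,1]$ with
\[
\partial_t m_t=h\!\int_{\ER^d}\!\frac{\delta b(x,\cdot)}{\delta m}(\mu_t,y)\,(\mu_1-\mu_0)(dy),\qquad
\partial_t \sigma_t=\int_{\ER^d}\!\frac{\delta \sigma(x,\cdot)}{\delta m}(\mu_t,y)\,(\mu_1-\mu_0)(dy);
\]
this follows by differentiating the defining identity of the flat derivative along the segment $(\mu_r)_{r\in[0,1]}$ (joint continuity and local boundedness of the flat derivatives, plus compactness of $\MM=\bar D$, justify the passage to the limit). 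Hence, by the boundedness of the flat derivatives in $\mathbf{(H_{MV})}$ and $\lvert\int f\,d(\mu_1-\mu_0)\rvert\le\|f\|_\infty\|\mu_1-\mu_0\|_{TV}$, one gets $\lvert\partial_t m_t\rvert\vee\lvert\partial_t\sigma_t\rvert\le C\|\mu_1-\mu_0\|_{TV}$.

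Next I would gather the uniform bounds not involving $t$-derivatives: uniform ellipticity gives $\|\sigma_t^{-1}\|\le\lambda_0^{-1/2}$ and $|\det\sigma_t|^{-1}\le\lambda_0^{-d/2}$; since $x\mapsto\sigma(x,\mu)$ is Lipschitz uniformly in $\mu$ and $\bar D$ is compact, $\sigma_t$ (hence $|\det\sigma_t|$) is bounded; since $x\mapsto b(x,\mu)$ is likewise bounded on $\bar D\times\mathcal{P}(\bar D)$ and $u,x\in\bar D$ with $\bar D$ bounded, $|u-m_t|\le C$ and therefore $\lvert w_t\rvert=\lvert\sigma_t^{-1}(u-m_t)\rvert\le C$. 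Then, differentiating, $\partial_t\sigma_t^{-1}=-\sigma_t^{-1}(\partial_t\sigma_t)\sigma_t^{-1}$ and, by Jacobi's formula, $\partial_t|\det\sigma_t|^{-1}=-|\det\sigma_t|^{-1}\operatorname{tr}(\sigma_t^{-1}\partial_t\sigma_t)$, so both are $\le C\|\mu_1-\mu_0\|_{TV}$, and $\partial_t w_t=(\partial_t\sigma_t^{-1})(u-m_t)-\sigma_t^{-1}\partial_t m_t$ is also $\le C\|\mu_1-\mu_0\|_{TV}$, using $|u-m_t|\le C$.

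Finally I would assemble: the chain and product rules give
\[
\partial_t\!\big(g(w_t)|\det\sigma_t|^{-1}\big)=|\det\sigma_t|^{-1}\,\nabla g(w_t)\cdot\partial_t w_t+g(w_t)\,\partial_t|\det\sigma_t|^{-1},
\]
and inserting the bounds above yields
\[
\big|\partial_t\!\big(g(w_t)|\det\sigma_t|^{-1}\big)\big|\le C\|\mu_1-\mu_0\|_{TV}\big(|\nabla g(w_t)|+g(w_t)\big),
\]
which is the claim. There is no real obstacle here; the only mild subtlety — and the only place where the boundedness of $\bar D$ is genuinely used — is that the right-hand side carries no $|w_t|$ factor, so the $|w_t|$ produced by $\partial_t w_t$ must be absorbed into $C$, which is exactly what the uniform bound $|w_t|\le C$ allows.
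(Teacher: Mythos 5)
Your proof is correct and follows essentially the same route as the paper's: compute $\partial_t m_t$ and $\partial_t\sigma_t$ via the flat-derivative identity, bound them by $C\|\mu_1-\mu_0\|_{TV}$ using the boundedness in $\mathbf{(H_{MV})}$, and then apply the chain rule together with the derivative of the inverse determinant (the paper uses the cofactor formula, you use Jacobi's formula — the same computation), absorbing $|u-m_t|$ and the ellipticity constants into $C$. Nothing to add.
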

\begin{proof}
\begin{align*}
\partial_t\left(g(\sigma_t^{-1}(u-m_t))\right)&=\nabla g(\sigma_t^{-1}(u-m_t)). \partial_t(\sigma_t^{-1}(u-m_t))\\
&=-\nabla g(\sigma_t^{-1}(u-m_t))\left[\sigma_t^{-2}(\partial_t\sigma_t)(u-m_t)+\sigma_t^{-1}(\partial_t m_t)
\right].
\end{align*}
Now, by definition of a flat derivative, 
\begin{align}
    \partial_t\sigma_t=\partial_{t}\sigma(x,\mu_{t})&=\int\frac{\delta\sigma(x,\cdot)}{\delta m}(\mu_{t},y)(\mu_{1}-\mu_{0})(dy)\quad\textnormal{and,}\nonumber\\
    h^{-1}\partial_t m_t=\partial_{t}b(x,\mu_t)&=\int\frac{\delta b(x,\cdot)}{\delta m}(\mu_{t},y)(\mu_{1}-\mu_{0})(dy).\label{def:mt}
\end{align}
By Assumption \ref{condhmv}, it follows that a positive constant $C$ (independent of $h$) exists such that:
\begin{align}\label{control:sigmat}
    \|\partial_t\sigma_t\|_\infty+h^{-1}\|\partial_t m_t\|_\infty\le C \|\mu_{1}-\mu_{0}\|_{TV}.
\end{align}
Under the uniform ellipticity assumption, we get:
$$
||{\rm det}(\sigma_t)|^{-1} \partial_t\left(g(\sigma_t^{-1}(u-m_t))\right)|\le C |\nabla g(\sigma_t^{-1}(u-m_t)|\|\mu_1-\mu_0\|_{TV}.
$$
On the other hand, 
$$\partial_t ({\rm det}(\sigma_t))= {\rm Tr}(^t {\rm com}(\sigma_t)(\partial_t \sigma_t)).$$
Thus, using \eqref{control:sigmat}, the continuity of $A\mapsto {\rm ^t com}(A)$ and, the boundedness and the uniform ellipticity of $\sigma$ (under \ref{condhmv}),  we obtain:
$$ \| \partial_t ({\rm det}(\sigma_t)^{-1})\|_\infty\le C \|\mu_{1}-\mu_{0}\|_{TV}.$$
The lemma follows.
\end{proof}

\subsection{Proof of \Cref{prop:model2}}
First, we prove that $(\mu_n)_{n\ge1}$ is tight. By \Cref{lem:hvdeux}$(ii)$, Assumption \ref{condhvdeux} is satisfied. We now check that $\|\delta_\mu\|_\infty<1$.  As in the proof of \Cref{prop:model1}, we write $\tilde{K}_{\mu,\partial}$ and $g$ instead of $\tilde{K}_{\mu,\partial}^{(h)}$ and $g_h$. Similarly as in \eqref{eq:29090}, we get
\begin{equation}
\tilde{K}_{\mu,\partial} (x,du)= {\bf 1}_{\{u\in {D}\}}g\left(\tilde{\varphi}_{x,\mu}(u)\right)|{\rm det} (\sigma(x,\mu)|^{-1})\lambda_d(du),\label{eq:29090bis}
\end{equation}
with $ \tilde{\varphi}_{x,\mu}(u)=\sigma(x,\mu)^{-1}\left[u-\tronc(x+hb(x,\mu))\right].$ Let $B(a,\rho)\subset D$. The fact that $\tronc$ is bounded combined with the uniform ellipticity assumption implies that the function $(x,\mu)\mapsto \sup_{u\in B(a,\rho)} |\tilde{\varphi}_{x,\mu}(u)|$ is bounded so that the set $\tilde{K}$ defined as the adherence of ${\bigcup_{(x,\mu)\in D\times{\cal P}(D)} \varphi_{x,\mu}(B(a,\rho))}$ is compact. Then, setting $\underline{c}=\inf_{u\in \tilde{K}} g(u)$,
$$\tilde{K}_{\mu,\partial}{\bf 1}(x)\ge \underline{c}\lambda_d(B(a,r))\;\Longrightarrow\; \|\delta_\mu\|_\infty<1.$$
We then deduce tightness from \Cref{prop:tightnessbis} (and the fact that $\sum \gamma_n^2<+\infty$). \\

\noindent Now, one checks the other assumptions. For the same reasons as in the proof of \Cref{prop:model1}$(i)$ (see \Cref{proof:model1}), we only check \ref{condhdeux} and \ref{condhquatre}. For \ref{condhdeux}, the proof follows the lines of the one of \Cref{prop:model1}$(i)$ replacing $m_t$ by $\tilde{m}_t=\tronc(x+hb(x,\mu_t))$. Since $\tronc$ has a bounded Jacobian, \eqref{def:mt} and \ref{condhmv} imply that $\| \partial_t \tilde{m}_t\|_\infty<+\infty$ and one can then check that the conclusions of \Cref{lem:controldensity} are still true (replacing $m_t$ by $\tilde{m}_t$). As a consequence, since $g$ and $\nabla g$ are bounded, one thus deduces that
$$\left|\int_0^1 \int_D f(u)  \partial_t\left(g(\sigma_t^{-1}(u-\tilde{m}_t)) |{\rm det}(\sigma_t)|^{-1} \right)\lambda_d(du) dt\right|\le C \|f\|_\infty \|\mu_1-\mu_0\|_{TV},$$
which in turn implies \ref{condhdeux} (by \eqref{eq:gsifm}).\\

\noindent Now, let us consider \ref{condhquatre}. Using that $\sigma$ is uniformly elliptic (uniformly in $\mu$)  and applying \eqref{eq:conddensminomajo} with 
$m=\tronc(x+hb(x,\mu)$, $\sigma=\sigma(x,\mu)$ (and $M=\sup_{x,\mu} |\tronc(x+hb(x,\mu)|$), we deduce that there exist some positive  $c_1$ and $c_2$, and a density $\mathfrak{p}_h$ such that
$$c_1\mathfrak{p}_h(u) \le g\left(\tilde{\varphi}_{x,\mu}(u)\right)({\rm det} (\sigma(x,\mu)^{-1}))\le c_2\mathfrak{p}_h(u),$$
where $\mathfrak{p}_h$ is a density on $D$. Plugging this inequality into \eqref{eq:29090bis}, this implies \ref{condhquatre} and allows to apply \Cref{theo:discret} to conclude. The fact that $\mu^\star_h(\partial D)=0$ follows from the same arguments as for \Cref{prop:model1}$(i)$.


\subsection{Proof of \Cref{prop:model1}$(ii)$}
{This proof is an adaptation of \cite[Theorem 3.9]{BCP} for McKean-Vlasov dynamics.\\ 

Let $(W_t)_{t\ge0}$ denote a Brownian motion. For a given step $h>0$, we denote by $(\xi^{h,\mu}_{t})_{t\geq0}$, the continuous-time Euler scheme defined by $\xi^{h,\mu}_{0}=y\in D$,
\begin{align*}
    \forall n\in\mathbb{N},\quad\xi^{h,\mu}_{(n+1)h}&:=\xi^{h,\mu}_{nh}+b(\xi^{h,\mu}_{nh},\mu)h+\sigma(\xi^{h,\mu}_{nh},\mu)(W_{(n+1)h}-W_{nh}),
\end{align*}
and, 
\begin{align*}
    \forall t\in[nh,(n+1)h),\quad\xi^{h,\mu}_{t}=\xi^{h,\mu}_{nh}+b(\xi^{h,\mu}_{nh},\mu)(t-nh)+\sigma(\xi^{h,\mu}_{nh},\mu)(W_{t}-W_{nh}).
\end{align*}
Note that the continuous Euler scheme is an It\^o process verifying
\begin{align}\label{eq:contEusch}
    \xi^{h,\mu}_{t}&=\xi^{h,\mu}_{0}+\int_{0}^{t}b(\xi^{h,\mu}_{\underline{s}_h},\mu)ds+\int_{0}^{t}\sigma(\xi^{h,\mu}_{\underline{s}_h},\mu)dW_{s},\quad \textnormal{with}\quad \underline{s}_h= h\lfloor \frac{s}{h}\rfloor.
    \end{align}
With the notations of \eqref{eq:kmupartialh}, the kernel of the killed process related to the Markov chain $(\xi_{nh}^{h,\mu})_{n\ge0}$ is still denoted by 
$K_{\mu,\partial}^{(h)}$ but keeping in mind that $\zeta=W$ is now a Brownian motion.  \\

Before going further, we first deduce from \eqref{eq:defQSD} and from an induction that for all  $t\ge0$, for every bounded measurable function $f:\mathbb{R}^{d}\to\mathbb{R}$,
\begin{equation}\label{eq:QSDrelationshipt}
    \mathbb{E}_{\mu^{\star}_{h}}\left[f(\xi^{h,\mu^{\star}_{h}}_{\underline{t}_h})1_{\{\bar{\tau}^h_{D}(\xi^{h,\mu^{\star}_{h}})>t\}}\right]=e^{-\lambda_{h}\underline{t}_h}\mu^{\star}_{h}(f),
\end{equation}
with $\lambda_{h}:=\frac{\log \rho_{h}}{h}$ and for a process $(Z_t)_{t\ge0}$,
\begin{equation}\label{discretestop}
\bar{\tau}^h_{D}(Z)=\inf\{nh, Z_{nh}\in D^c\}.
\end{equation}
In other words, $\bar{\tau}^h_{D}$ denotes the exit time related to the discrete-time dynamics (this notation will be still used in the sequel). Note that in order to write \eqref{eq:QSDrelationshipt}, we used that $\{\bar{\tau}^h_{D}(\xi^{h,\mu^{\star}_{h}})>t\}=\{\bar{\tau}^h_{D}(\xi^{h,\mu^{\star}_{h}})>\un{t}_h\}$.

 To prove the result, we now need  to make $h$ go to $0$ in \eqref{eq:QSDrelationshipt}. \\
\noindent \textit{Step 1} (Tightness of the sequence of QSDs):
{We show that $(\mu^{\star}_{h})_{h\leq h_{0}}$ is tight on $D$. For $\delta>0$, set
\begin{align}\label{eq:bdelta}
    B_{\delta}:=\bigg\{x\in D,\quad d(x,\partial D)\leq\delta\bigg\}.
\end{align}
It is enough to prove that for every $\varepsilon>0$, there exists $\delta_{\varepsilon}>0$ such that for every $h\in(0,h_{0}]$, $\mu^{\star}_{h}(B_{\delta_{\varepsilon}})\leq\varepsilon$. \\

From \eqref{eq:QSDrelationshipt} applied with $f={\bf 1}_{B_\delta}$, we get in particular
$$ \mu^{\star}_{h}(B_\delta)\le \PE_{\mu_h^\star}(\xi^{h,\mu^{\star}_h}_{\underline{t}_h}\in B_\delta)\le \lambda (B_\delta)\sup_{(x,y)\in\bar{D}^2} p_{\underline{t}_h}^{h,\mu^\star_h}(x,y),$$
where $p_t^{h,\mu}(x,.)$ denotes the density of the Euler scheme at time $t$ (which exists under the uniform ellipticity assumption). To bound the density uniformly in $h$ and $\mu\in{\cal P}(D)$, we use \cite[Theorem 2.1]{LemaireMenozzi_2010} (which provides Aronson estimates for the density). One checks that the related assumptions $\mathbf{(UE)}$ and $\mathbf{(SB)}$ are satisfied with some parameters $L_0$ and $\lambda_0$ independent of $\mu$ under our assumption \ref{condhmv}. Thus, by this theorem, we deduce that for all $T_0<T_1$, a constant $C$ exists such that for all $h\in(0,h_0)$, for all $t\in[T_0,T_1]$,
\begin{equation}\label{eq:densityboundeuler}
\sup_{(x,y)\in\bar{D}^2} p_{t}^{h,\mu^\star_h}(x,y)\le C.
\end{equation}
The tightness follows.\\

\noindent From now, we can thus consider a weak limit $\tilde{\mu}$ of a convergent subsequence $(\mu^{\star}_{h_n})_{n\ge1}$ with $h_n\rightarrow0$.\\

\noindent \textit{Step 2} (About the left-hand side of \eqref{eq:QSDrelationshipt}): we  show that for all $t\ge0$ and for all bounded Lipschitz continuous function $f:D\mapsto \ER$,

\begin{equation}\label{eq:emustarhn} \mathbb{E}_{\mu^{\star}_{h_n}}\left[f(\xi^{h_n,\mu^{\star}_{h_n}}_{\underline{t}_{h_n}})1_{\bar{\tau}^{h_n}_{D}(\xi^{h_n,\mu^{\star}_{h_n}})>\underline{t}_{h_n}}\right]\xrightarrow{n\rightarrow+\infty}\mathbb{E}_{\mu^{\star}}\left[f(\xi^{\mu^{\star}}_{t})1_{{\tau}_{D}(\xi^{\mu^{\star}})>t}\right],
\end{equation}
where $\xi^{\mu^\star}$ satisfies:
 $$\xi^{\mu^{\star}}_t=\xi^{\mu^{\star}}_0+\int_0^t b(\xi^{\mu^{\star}}_s,\mu^\star) ds+\int_0^{t} \sigma(\xi^{\mu^{\star}}_s,\mu^\star) dW_s.$$
In order to simplify the notations, we write $h$ instead of $h_n$ in the sequel of this step and assume that $\tilde{\mu}^\star_h\rightarrow\mu^\star$ when $h\rightarrow0$. This does not change the proof. We first decompose the error related to \eqref{eq:emustarhn} into three parts:
\begin{align}
\mathbb{E}_{\mu^{\star}_{h}}\Big[f(\xi^{h,\mu^{\star}_{h}}_{t})1_{\bar{\tau}^{h}_{D}(\xi^{h,\mu^{\star}_{h}})>t}\Big]-\mathbb{E}_{\mu^{\star}}\Big[f(\xi^{\mu^{\star}}_{t})1_{{\tau}_{D}(\xi^{\mu^{\star}})>t}&\Big]=\mu^\star_h(\bar{\cal P}_{t}^{h,\mu^\star_h,\bar{\tau}}f-\bar{\cal P}_{t}^{h,\mu^\star,\bar{\tau}}f)\label{eq:decompa41}\\
&+\mu^\star_h(\bar{\cal P}_{t}^{h,\mu^\star,\bar{\tau}}f)-\mu^\star_h(P_t^{\mu^\star,\partial} f)\label{eq:decompa42}\\
&+\mu^\star_h(P_t^{\mu^\star,\partial}f)-\mu^\star(P_t^{\mu^\star,\partial}f)\label{eq:decompa43},
%
\end{align}
where,
\begin{align*}
\bar{\cal P}_{t}^{h,\mu,\bar{\tau}}f(x)=\ES_x\left[f(\xi^{h,\mu}_{t})1_{\bar{\tau}_{D}^h(\xi^{h,\mu})>{t}}\right]\quad\textnormal{and,}\quad
P_{t}^{\mu,\partial} f(x)=\ES_x[f(\xi^{\mu}_t){\bf 1}_{\tau_D(\xi^\mu)>t}].
\end{align*}
To control the right-hand side of \eqref{eq:decompa41}, we first use that $f$ is bounded Lipschitz continuous to obtain that for all $T>0$, for all $t\in[0,T]$:
\begin{align}
\nonumber |\bar{\cal P}_{t}^{h,\mu^\star_h,\bar{\tau}}f(x)-\bar{\cal P}_{t}^{h,\mu^\star,\bar{\tau}}f(x)|&\le [f]_1\ES_x[|\xi^{h,\mu^\star_h}_{t}-\xi^{h,\mu^\star}_{t}|]\\
&+\|f\|_\infty \PE(\{\bar{\tau}_{D}^h(\xi^{h,\mu^{\star}_h})>{t}\}\Delta \{\bar{\tau}_{D}^h(\xi^{h,\mu^\star})>{t}\})\\
\label{eq:otherpartsum}\le C_T \Big({\cal W}_1&(\mu^{\star}_h,\mu^\star)+   \PE(\{\bar{\tau}_{D}^h(\xi^{h,\mu^{\star}_h})>{t}\}\Delta \{\bar{\tau}_{D}^h(\xi^{h,\mu^\star})>{t}\})\Big),
\end{align}
where in the second line, we used \Cref{lem:muomu1} with $x=y$, $\mu_0=\mu^{\star}_h$ and $\mu_1=\mu^{\star}$. Since in this bounded setting, the weak convergence implies the ${\cal W}_1$-convergence, we only focus on the second term. Let us first consider the first part of the symmetric difference:
\begin{equation}\label{eq:taubardsyl}
\PE(\bar{\tau}_{D}^h(\xi^{h,\mu^{\star}_h})>{t}, \bar{\tau}_{D}^h(\xi^{h,\mu^\star})<{t}).
\end{equation}
To manage this term, we apply \Cref{lem:xyexit} with $X=\xi^{h,\mu^{\star}_h}$ and $Y=\xi^{h,\mu^{\star}}$. By \Cref{lem:muomu1}, $(iii)$ is satisfied with $c_3=C_T \delta^{-1}  {\cal W}_1(\mu^{\star}_h,\mu^\star)$. For $(ii)$, we use \cite[Theorem 2.4]{BGGobet} which states that for a given $\mu$,
\begin{equation}\label{eq:taubadrre}
\ES_x[\bar{\tau}_{D}^h(\xi^{h,\mu})]\le d_{(2.4)}[\tilde{\psi}_D(x)+\sqrt{h}],
\end{equation}
where $\tilde{\psi}_D(x)$ is a smooth extension of the algebraic distance to the boundary. More precisely, by \cite[Lemma 2.4]{panloup_reygner}, it can be shown that $P_t:=\tilde{\psi}_D(\xi^{h,\mu}_t)$ satisfies the assumption $\mathbf{(P)}$ of \cite{BGGobet} (see also Lemma A.3 of \cite{BGGobet} for details). By Lemma A.1 and A.4 of \cite{BGGobet} , one can check that the other assumptions of  \cite[Theorem 2.4]{BGGobet} hold true under our assumptions and that the constant denoted by $d_{(2.4)}$ is uniform in $\mu\in{\cal P}(D)$, $t\le T$ and $h\le h_0$. We thus deduce from \eqref{eq:taubadrre} and Markov inequality that Assumption $(ii)$ of  \Cref{lem:xyexit} holds true with $c_2= C(\delta+\sqrt{h})$ where $C$ is a constant independent of the parameters. \\

\noindent Finally, let us consider Assumption $(i)$ of  \Cref{lem:xyexit} with $\mu=\mu^\star_h$. Since $(\mu^\star_h)$ is tight, one can find for all $\varepsilon>0$, a $\delta_{\varepsilon}>0$ such that $B_{\delta_\varepsilon}$ defined by \eqref{eq:bdelta} satisfies: for all $h\in(0,h_0]$, $\mu_h^{\star}(B_{\delta_\varepsilon})$. 
Thus, 
\begin{align}
\PE_{\mu^\star_h}(\bar{\tau}_{D}^h(\xi^{h,\mu^{\star}_h})\in [t-t_0,t])
&\le \varepsilon+\sup_{x, d(x,\partial D)\ge \delta_{\varepsilon}}\PE_x((\bar{\tau}_{D}^h(\xi^{h,\mu^{\star}_h})\le 2 t_0)\nonumber\\
&+ \sup_{x, d(x,\partial D)\ge \delta_{\varepsilon}, t\ge 2 t_0}\PE_x(\bar{\tau}_{D}^h(\xi^{h,\mu^{\star}_h})\in [(t-t_0),t]).\label{eq:2039301}
\end{align}
We control the two right-hand terms. First, for all $x \in D$ such that $d(x,\partial D)\ge \delta_{\varepsilon}$,
\begin{align*}
 \PE_x((\bar{\tau}_{D}^h(\xi^{h,\mu^{\star}_h})&\le 2 t_0)\le \PE_x(\sup_{t\le 2t_0} |\bar{\tau}_{D}^h(\xi^{h,\mu^{\star}_h}_t-x|\ge \delta_{\varepsilon})\le \delta_{\varepsilon}^{-2} \ES_x
[\sup_{t\le 2t_0} |\bar{\tau}_{D}^h(\xi^{h,\mu^{\star}_h}_t-x|^2]\\
&\le 2 \delta_{\varepsilon}^{-2} \left(\ES[\int_0^{2t_0} |b(\xi^{h,\mu^{\star}_h}_{\un{t}_h},\mu^\star_h) dt|^2]+\ES[\left|\sup_{t\le 2t_0} \int_0^t \sigma(\xi^{h,\mu^{\star}_h}_{\un{t}_h},\mu^\star_h) dW_s\right|^2]\right)\\
&\le C \delta_{\varepsilon}^{-2} (t_0^2+t_0)\le C \delta_{\varepsilon}^{-2} t_0.
\end{align*}
where the constant $C$ is universal since the coefficients $b$ and $\sigma$ are uniformly bounded in $x$ and $\mu$ (since they are uniformly Lispschitz on a compact set).
For \eqref{eq:2039301}, we decompose into two parts and deduce from the strong Markov property that
$$ \PE_x(\bar{\tau}_{D}^h(\xi^{h,\mu^{\star}_h})\in [(t-t_0),t])\le 
\PE(\xi^{h,\mu^{\star}_h}_{t-t_0}\in B_{\delta_{\varepsilon}})+\sup_{x, d(x,\partial D)\ge \delta_{\varepsilon}}\PE_x((\bar{\tau}_{D}^h(\xi^{h,\mu^{\star}_h})\le 2 t_0).
$$
The second term can be managed as previously whereas the first one satisfies the inequality:
$$\PE(\xi^{h,\mu^{\star}_h}_{t-t_0}\in B_{\delta_{\varepsilon}})\le C \lambda(B_{\delta_{\varepsilon}}),$$
owing to the inequality \eqref{eq:densityboundeuler}. This means that we can set
$$c_3= \varepsilon+ C\lambda(B_{\delta_{\varepsilon}})+ \delta_{\varepsilon}^{-2} t_0,$$
where $C$ only depends on $T$. Applying \Cref{lem:xyexit}, we get for all $T>0$, there exists a constant $C$ such that for all $t\in(0,T]$,  for all positive $\varepsilon$ and $\delta$,
\begin{align*}
 \PE(\bar{\tau}_{D}^h(\xi^{h,\mu^{\star}_h})>{t},& \bar{\tau}_{D}^h(\xi^{h,\mu^\star})<{t})\\
 &\le C \left[\delta^{-1}  {\cal W}_1(\mu^{\star}_h,\mu^\star)+
\delta+\sqrt{h}+\varepsilon+ \lambda(B_{\delta_{\varepsilon}})+ \delta_{\varepsilon}^{-2} t_0\right],
\end{align*}
where $\delta_\varepsilon$ is a positive number which can be taken as small as we want (for a given $\varepsilon$). Taking $\delta$ small enough and then small enough $h$, $\varepsilon$, $\delta_\varepsilon$ and finally $t_0$, we thus obtain the expected control of \eqref{eq:taubardsyl}. With the same arguments, one controls the other part of the symmetric difference of \eqref{eq:otherpartsum} and this ends the study of \eqref{eq:decompa41}.\\

\noindent Now, for \eqref{eq:decompa42}, this corresponds to the convergence of the killed semi-group of the Euler scheme towards the killed semi-group of the related diffusion with coefficients $b(.,\mu^\star)$ and $\sigma(.,\mu^\star)$. This can be obtained by a similar strategy as before that is left to the reader.\\

\noindent Finally, for  \eqref{eq:decompa43}, we need to prove that the map $x\mapsto P_t^{\mu^\star,\partial} f(x)$ is continuous on $D$. Denote by $\xi^x$ the process solution starting from $x$ to the diffusion, $d\xi_t=b(\xi_t,\mu^\star) dt+\sigma(\xi_t,\mu^\star) dW_t$. Under our assumptions, 
$\sup_{(x,y)\in D^2} \ES[\sup_{t\le T} |\xi_t^x-\xi_t^y|^2]\le C_T|x-y|^2$ so that, keeping in mind that $f$ is Lipschitz continuous, it is thus enough to control $\PE(\{\tau_D(\xi^x)>t\}\Delta \{\tau_D(\xi^y)<t\})$.  This can be again obtained with an adaptation of \Cref{lem:xyexit} where for a given $x_0\in D$, one sets $\delta=\frac{1}{2} d(x_0,\partial D)$. 
This concludes the proof of this step.\\

\noindent \textit{Step 3.} (Bounds for the extinction rate) We prove that some positive $\lambda_{\min}$, $\lambda_{\max}$ and $h_{0}$ exist such that for any $h\in(0,h_{0})$, we have 
\begin{align*}
    \lambda_{\min}\le \lambda_{h}\le \lambda_{\max}.
\end{align*}}
For the lower-bound, it is enough to check that for a given $t_0>0$
$$\sup_{(\mu,x,h)\in{\cal P}(D)\times D\times (0,h_0]} \PE_x(\bar{\tau}_D(\xi^{h,\mu})>\lfloor t_0 \rfloor_h)<1.$$
For this point, we can for instance again use \cite[Theorem 2.1]{LemaireMenozzi_2010} which provides lower-bounds on the density:  taking a given compact set $K_0$ in $D^c$, this result involves that under our uniform boundedness, Lipschitz and ellipticity assumptions, for $t_0\ge 2 h_0$, there exists a positive constant $C$ such that for every $(\mu,x,h)\in{\cal P}(D)\times D\times (0,h_0]$,
$$  \PE(\bar{\tau}_D(\xi^{h,\mu})\le \lfloor t_0 \rfloor_h)\ge \PE(\xi^{h,\mu}_{ \lfloor t_0 \rfloor_h}\in K_0)\ge C\lambda_d(K_0)>0,$$
as soon as $K_0$ non-empty interior.  
Second, one remark that $(\lambda_{h})_{h\le h_0}$ is upper-bounded as soon as there exists $t_0>0$ such that
$$\inf_{h>0} \mathbb{P}_{\mu^{\star}_{h}}\left(\{\bar{\tau}^h_{D}(\xi^{h,\mu^{\star}_{h}})>t_0\right)>0.$$
Denoting by $K_1$, a compact subset of $D$ such that $\inf_{h\in(0,h_0]}\mu^\star_h(K_1)>0$ (such a $K_1$ owing to the tightness property), one can check that the above property is true if 
$$\sup_{(\mu,x,h)\in{\cal P}(D)\times K_1\times (0,h_0]} \PE_x({\tau}_D(\xi^{h,\mu})\le  t_0)<1.$$
But this property is easy to ensure. Actually, with standard Gronwall arguments, it can be shown that $\ES_x[\sup_{0\le t\le t_0} |\xi^{h,\mu}_{t}-x|^2]\le C t$ (where $C$ is independent of the parameters under the assumptions of the result) and, setting $\delta=d(K_1,\partial D)$,  it follows that 
$$ \PE_x({\tau}_D(\xi^{h,\mu})\le t_0)\le \PE(\sup_{t\le t_0} |\xi^{h,\mu}_t-x|\ge \delta)\le C\delta^{-2} t_0.$$
For $t_0$ small enough, $C\delta^{-2} t_0<1$ and the result follows.\\

\noindent \textit{Step 4.} (Conclusion) Recall that $(\mu^\star_{h_n})_{n\ge1}$ denote a convergent subsequence of $(\mu^\star_{h})_{h\le h_0}$ to $\mu^\star$. At the price of an additional extraction, we deduce from Step 3 that $\lambda_{h_n}\rightarrow\lambda^\star\in(0,+\infty)$. Then, from Step 2 and  \eqref{eq:QSDrelationshipt}, we get for all $t\ge0$ and for all Lipschitz continuous function $f:D\rightarrow\ER$,
\begin{equation}
\mathbb{E}_{\mu^{\star}}\left[f(\xi^{\mu^{\star}}_{t}){\bf 1}_{\{{\tau}_{D}(\xi^{\mu^{\star}})>t\}}\right]=e^{-\lambda^\star{t}}\mu^{\star}(f).
\end{equation}
This concludes the proof.

\begin{lem}\label{lem:muomu1} For $\mu_0$ and $\mu_1\in {\cal P}(\bar{D})$, let $(\xi^{h,\mu_0}_t)_{t\ge0}$ and $(\xi^{h,\mu_1}_t)_{t\ge0}$ be given by \eqref{eq:contEusch} starting from initial conditions $x$ and $y$ respectively. Then, under the assumptions of \Cref{prop:model1}$(ii)$, a universal constant $C$ exists such that for all $T\ge0$,
$$ \ES\left[\sup_{t\le T} |\xi^{h,\mu_{1}}_{t}-\xi^{h,\mu_{0}}_{t}|^{2}\right]\le e^{C T } \left(|x-y|^2+ C{\cal W}_1(\mu_0,\mu_1)^2\right).$$
\end{lem}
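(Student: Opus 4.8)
The plan is to compare the two continuous-time Euler schemes through their It\^o representation \eqref{eq:contEusch} and to close a Gronwall estimate on $g(T):=\ES\big[\sup_{s\le T}|\xi^{h,\mu_1}_s-\xi^{h,\mu_0}_s|^2\big]$. Writing $\Delta_s:=\xi^{h,\mu_1}_s-\xi^{h,\mu_0}_s$ and $\un s_h:=h\lfloor s/h\rfloor$, one has
\begin{equation*}
\Delta_t=(y-x)+\int_0^t\big(b(\xi^{h,\mu_1}_{\un s_h},\mu_1)-b(\xi^{h,\mu_0}_{\un s_h},\mu_0)\big)\,ds+\int_0^t\big(\sigma(\xi^{h,\mu_1}_{\un s_h},\mu_1)-\sigma(\xi^{h,\mu_0}_{\un s_h},\mu_0)\big)\,dW_s .
\end{equation*}
I would then work from It\^o's formula applied to $|\Delta_t|^2$ rather than from squaring this identity, so that the drift contribution enters linearly and no spurious factor $T$ is created inside the Gronwall loop.

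The crucial ingredient is a Lipschitz bound on the coefficients jointly in the space and the measure variable, \emph{the measure increment being measured in} ${\cal W}_1$: there is a constant $C$, depending only on the quantities in \ref{condhmv} and in \Cref{prop:model1}$(ii)$, such that
\begin{equation*}
|b(z',\mu_1)-b(z,\mu_0)|+\|\sigma(z',\mu_1)-\sigma(z,\mu_0)\|\le C\big(|z'-z|+{\cal W}_1(\mu_0,\mu_1)\big),\qquad z,z'\in\ER^d,\ \mu_0,\mu_1\in{\cal P}(\bar D).
\end{equation*}
The dependence in the space variable is exactly the uniform Lipschitz part of \ref{condhmv}. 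For the measure variable I would use the defining formula of the flat derivative,
\begin{equation*}
b(z,\mu_1)-b(z,\mu_0)=\int_0^1\!\!\int_{\ER^d}\frac{\delta b(z,\cdot)}{\delta m}\big(s\mu_1+(1-s)\mu_0,w\big)\,(\mu_1-\mu_0)(dw)\,ds ,
\end{equation*}
combined with Kantorovich--Rubinstein duality: since the Lipschitz seminorm $\big[\frac{\delta b(z,\cdot)}{\delta m}(\mu,\cdot)\big]_1$ is bounded uniformly in $z$ and $\mu$ under the assumptions of \Cref{prop:model1}$(ii)$, the inner integral is bounded by $C\,{\cal W}_1(\mu_0,\mu_1)$, and the same holds entrywise for $\sigma$. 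Evaluating at $z'=\xi^{h,\mu_0}_{\un s_h}$, $z=\xi^{h,\mu_1}_{\un s_h}$ and using $|\Delta_{\un s_h}|\le\sup_{u\le s}|\Delta_u|$ (which disposes of the frozen Euler argument) yields, for a.e.\ $s$,
\begin{equation*}
|b(\xi^{h,\mu_1}_{\un s_h},\mu_1)-b(\xi^{h,\mu_0}_{\un s_h},\mu_0)|+\|\sigma(\xi^{h,\mu_1}_{\un s_h},\mu_1)-\sigma(\xi^{h,\mu_0}_{\un s_h},\mu_0)\|\le C\Big(\sup_{u\le s}|\Delta_u|+{\cal W}_1(\mu_0,\mu_1)\Big).
\end{equation*}

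From there the argument is routine and I would carry it out as follows. It\^o's formula applied to $|\Delta_t|^2$, the inequality $2\langle a,b\rangle\le|a|^2+|b|^2$ on the drift term, and the coefficient bound above give
\begin{equation*}
|\Delta_t|^2\le|x-y|^2+C\int_0^t\Big(\sup_{u\le s}|\Delta_u|^2+{\cal W}_1(\mu_0,\mu_1)^2\Big)\,ds+M_t ,
\end{equation*}
with $M$ a local martingale controlled, through the Burkholder--Davis--Gundy inequality, by the diffusion increments bounded above. Taking the supremum over $t\le T$ and then expectations (with a standard localization for $M$, Doob's $L^2$-inequality and Burkholder--Davis--Gundy to handle $\ES[\sup_{t\le T}|M_t|]$, a small multiple of $g(T)$ being absorbed on the left-hand side) yields $g(T)\le C\big(|x-y|^2+{\cal W}_1(\mu_0,\mu_1)^2\big)+C\int_0^T g(s)\,ds$, and Gronwall's lemma then gives the stated estimate, the factor $e^{CT}$ absorbing the $T$-dependence generated by the loop (the constant $C$ is universal since all Lipschitz constants and sup-bounds are independent of $h,\mu_0,\mu_1,x,y$). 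The only point that is genuinely not mechanical --- and the reason the lemma requires the hypotheses of \Cref{prop:model1}$(ii)$ and not merely those of part $(i)$ --- is the upgrade of the measure-dependence of $b$ and $\sigma$ from a total-variation bound (which boundedness of the flat derivatives alone provides) to a ${\cal W}_1$ bound; this is what boundedness of the Lipschitz seminorms of the flat derivatives buys, and it is what makes the right-hand side of the lemma a ${\cal W}_1$ quantity, as needed in Step 2 of the proof of \Cref{prop:model1}$(ii)$.
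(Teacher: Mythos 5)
Your proof is correct and follows essentially the same route as the paper's: It\^o's formula applied to $|\Delta_t|^2$, the flat-derivative representation combined with the uniform bounds on $\big[\frac{\delta b(x,\cdot)}{\delta m}(\mu,\cdot)\big]_1$ and $\big[\frac{\delta \sigma(x,\cdot)}{\delta m}(\mu,\cdot)\big]_1$ and Kantorovich--Rubinstein duality to turn the measure dependence into a ${\cal W}_1(\mu_0,\mu_1)$ term, and then a Gronwall argument. The only (harmless) difference is in closing the estimate: you bound the frozen grid-point increment by $\sup_{u\le s}|\Delta_u|$ and run a single Gronwall directly on $\ES\big[\sup_{t\le T}|\Delta_t|^2\big]$ via BDG/Doob with absorption, whereas the paper first controls $\ES[|\Delta_t|^2]$ by a stepwise Gronwall over the Euler grid intervals and only afterwards brings the supremum inside the expectation by the standard martingale argument.
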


By Itô formula, we have
\begin{equation}\label{eq:itof}
\frac{1}{2}|\xi^{h,\mu_{1}}_{t}-\xi^{h,\mu_{0}}_{t}|^{2}=
\frac{1}{2}|\xi^{h,\mu_{1}}_{{\un{t}_h}}-\xi^{h,\mu_{0}}_{{\underline{t}_h}}|^2+
\int_{\un{t}_h}^t F(\xi^{h,\mu_{1}}_{s},\xi^{h,\mu_{0}}_{s},\xi^{h,\mu_{1}}_{\underline{s}_h},\xi^{h,\mu_{0}}_{\underline{s}_h})ds+M_t,
\end{equation}
where,
\begin{align*}
 F(x,y,\un{x},\un{y})&=\langle x-y, b(\un{x},\mu_1)-b(\un{y},\mu_0)\rangle\\
 &+ \frac{1}{2} {\rm Tr}((\sigma(\un{x},\mu_1)-\sigma(\un{y},\mu_0))(\sigma(\un{x},\mu_1)-\sigma(\un{y},\mu_0))^*),
 \end{align*}
and,
$$ M_t=\int_0^t \langle \xi^{h,\mu_{1}}_{s}-\xi^{h,\mu_{0}}_{s}, (\sigma(\xi^{h,\mu_{1}}_{\underline{s}_h},\mu_1)-\sigma(\xi^{h,\mu_{0}}_{\underline{s}_h},\mu_0)) dW_s\rangle.$$
On the one hand, the definition of the flat derivative yields:
\begin{equation}\label{eq:xmoinsyb}
\begin{split}
\langle x-y, b(\un{x},\mu_1)-b(\un{y},\mu_0)\rangle&=\langle x-y, b(\un{x},\mu_1)-b(\un{y},\mu_1)\rangle\\
&+\int_{0}^{1}\int \langle x-y, \frac{\delta b_{\un{y}}}{\delta m} (\mu_t,z) \rangle(\mu_1-\mu_0)(dz) dt,
\end{split}
\end{equation}
with $b_{\un{y}}$ being the application $\mu\mapsto b(\un{y},\mu)$ and $\mu_t=(1-t)\mu_0+ t\mu_1$. Setting $f(z)= \langle x-y, \frac{\delta b_{\un{y}}}{\delta m} (\mu_t,z)\rangle$,
\begin{align*}
 \int \langle x-y, &\frac{\delta b_{\un{y}}}{\delta m} (\mu_t,z) \rangle(\mu_1-\mu_0)(dz)=\mu_1(f)-\mu_0(f)\\
 &\le [f]_1 {\cal W}_1(\mu_0,\mu_1)\le |x-y| \left(  \sup_{(z,\mu)\in\MM\times{\cal P}(\MM)}\bigg[\frac{\delta b(z,\cdot)}{\delta m}(\mu,\cdot)\bigg]_{1} \right){\cal W}_1(\mu_0,\mu_1).
 \end{align*}
Thus, using the Lipschitz assumption on the flat derivative (and the fact that $b$ is uniformly Lipschitz in the first variable for the first right-hand term of \eqref{eq:xmoinsyb}),
we get the existence of a constant $C$ (independent of the parameters) such that
$$\langle x-y, b(\un{x},\mu_1)-b(\un{y},\mu_0)\rangle\le C|x-y|\left(|\un{x}-\un{y}|+ {\cal W}_1(\mu_0,\mu_1)\right).$$
This leads to 
$$\langle x-y, b(\un{x},\mu_1)-b(\un{y},\mu_0)\rangle\le C(|x-y|^2+|\un{x}-\un{y}|^2+ {\cal W}_1(\mu_0,\mu_1)^2),$$
by using the elementary inequality $|uv|\le \frac{1}{2}(|u|^2+|v|^2)$ (and by keeping the same notation for the constant $C$ even it may have changed).
Similar arguments can be used for the diffusion part to obtain:
$${\rm Tr}((\sigma(\un{x},\mu_1)-\sigma(\un{y},\mu_0))(\sigma(\un{x},\mu_1)-\sigma(\un{y},\mu_0))^*)\le  C\left(|\un{x}-\un{y}|^2+ {\cal W}_1(\mu_0,\mu_1)^2\right).$$
Hence, plugging these controls into \eqref{eq:itof} and using that $(M_t)_{t\ge0}$ is a true martingale leads to:
$$ u_t\le u_{\un{t}_h}(1+C (t-\un{t}_h))+ C(t-\un{t}_h){\cal W}_1(\mu_0,\mu_1)^2+C\int_{\un{t}_h}^t u_s ds,$$
with $u_t= \frac{1}{2}\ES[|\xi^{h,\mu_{1}}_{t}-\xi^{h,\mu_{0}}_{t}|_{2}].$ The Gronwall lemma then leads to: for all $n\ge0$, for all $t\in[nh,(n+1)h]$,
$$ u_{t}\le \left(u_{nh}(1+C h)+ Ch{\cal W}_1(\mu_0,\mu_1)^2\right) e^{Ch}\le u_{nh} e^{Ch}+ Ch{\cal W}_1(\mu_0,\mu_1)^2.$$
Once again, we do not change the notation for the constant $C$ (even it may change from line to line). By an induction, we obtain for $h$ small enough:
$$ \ES\left[ |\xi^{h,\mu_{1}}_{t}-\xi^{h,\mu_{0}}_{t}|^{2}\right]\le e^{C t} \left(u_0+ C{\cal W}_1(\mu_0,\mu_1)^2\right).$$
We thus finally need to deduce that the supremum can be taken inside the expectation. This is a classical procedure again based on the Gronwall applied to 
$v_t=\ES\left[ \sup_{0\le s\le t} |\xi^{h,\mu_{1}}_{s}-\xi^{h,\mu_{0}}_{s}|^{2}\right]$
using the previous controls on $(u_t)$ and the nice properties on the supremum of a martingale  (see \emph{e.g.} the proof of Theorem 7.2. in \cite{pages_book} for a similar argument in a slightly different setting).

\begin{lem} \label{lem:xyexit} Let $(X_t)_{t\ge0}$ and $(Y_t)_{t\ge0}$ denote two Markov processes with respect to the same filtration $({\cal F}_t)_{t\ge0}$ with values in ${D}$. Let $\mu$ be a probability on $D$. Let $t_0$, $t$ be positive numbers such that $t_0<t$. Denote by $\tau_X$ and $\tau_Y$ the exit times of $D$ and assume that.\\

\noindent $(i)$ $\PE_\mu (\tau_X\in [t-t_0,t])\le c_1.$\\

\noindent $(ii)$ $\sup_{x, d(x,\partial D)\le \delta } \PE_x(\tau_Y>t_0)\le c_2.$\\

\noindent $(iii)$  $ \PE_\mu(\sup_{s\le t}|X_s-Y_s|\ge\delta)\le c_3$.\\

Then, 
$$\PE_\mu(\tau_Y>t, \tau_X<t)\le c_1+c_2+c_3.$$
\end{lem}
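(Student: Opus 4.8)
The plan is to decompose the event $\{\tau_Y>t,\ \tau_X<t\}$ according to when and where $X$ leaves $D$, and then to use the strong Markov property of $Y$ at the stopping time $\tau_X$. The first step is the splitting
$$\{\tau_Y>t,\ \tau_X<t\}\ \subseteq\ \{\tau_X\in[t-t_0,t]\}\ \cup\ \{\tau_Y>t,\ \tau_X<t-t_0\},$$
whose first term has $\PE_\mu$-probability at most $c_1$ by $(i)$. The point of isolating $\{\tau_X<t-t_0\}$ is that on this event a full time window of length at least $t_0$ still separates $\tau_X$ from $t$, which is exactly what assumption $(ii)$ needs.

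Next I would intersect $\{\tau_Y>t,\ \tau_X<t-t_0\}$ with $\{\sup_{s\le t}|X_s-Y_s|<\delta\}$, throwing away its complement at cost $c_3$ by $(iii)$. On the surviving event, at time $\tau_X$ the process $X$ has left $D$ while $Y$ is still in $D$ (since $\tau_Y>t>\tau_X$); as $D$ is open in $\ER^d$, the segment from $Y_{\tau_X}\in D$ to $X_{\tau_X}\in D^c$ must meet $\partial D$, so $d(Y_{\tau_X},\partial D)\le|Y_{\tau_X}-X_{\tau_X}|<\delta$. Also $\tau_Y>t\ge\tau_X+t_0$ there, hence $\{\tau_Y>t\}\subseteq\{\tau_Y>\tau_X+t_0\}$ on $\{\tau_X<t-t_0\}$. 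Conditioning on ${\cal F}_{\tau_X}$ and using that $\tau_X$ is an $({\cal F}_s)$-stopping time and $Y$ is $({\cal F}_s)$-Markov, the probability of the surviving event is therefore at most $\PE_\mu\big[\mathbf{1}_{\{\tau_X<t-t_0\}}\,\mathbf{1}_{\{d(Y_{\tau_X},\partial D)<\delta\}}\,\PE_{Y_{\tau_X}}(\tau_Y>t_0)\big]$, which by $(ii)$ is bounded by $c_2$. Summing the three contributions yields the asserted bound $c_1+c_2+c_3$.

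I do not expect a real obstacle: the argument is essentially bookkeeping in conditioning. The two places that need a little care are (a) the elementary geometric fact that $\partial D$ separates $D$ from $D^c$ along the segment joining two such points --- this works both for a continuous exit time (where $X_{\tau_X}\in\partial D$) and for the grid-valued exit time $\bar\tau^h_D$ used in the application (where $X_{\tau_X}\in D^c$, while $Y_{\tau_X}\in D$ because $\tau_X$ is then a grid point and $\tau_Y>\tau_X$); and (b) phrasing the strong-Markov step as a conditional expectation given ${\cal F}_{\tau_X}$ restricted to $\{\tau_X<t-t_0\}$, so that $(ii)$ can be invoked pointwise with starting point $Y_{\tau_X}$, which lies within $\delta$ of $\partial D$ on the relevant event.
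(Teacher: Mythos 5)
Your proof is correct and follows essentially the same route as the paper: the same decomposition into $\{\tau_X\in[t-t_0,t]\}$ and $\{\tau_Y>t,\ \tau_X<t-t_0\}$, control of the coupling event via $(iii)$, the observation that $d(Y_{\tau_X},\partial D)\le\delta$ on the remaining event, and the strong Markov property at $\tau_X$ combined with $(ii)$. You even make explicit two points the paper leaves implicit (the geometric step giving $d(Y_{\tau_X},\partial D)\le|X_{\tau_X}-Y_{\tau_X}|$ and the conditioning on ${\cal F}_{\tau_X}$), so nothing further is needed.
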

\begin{proof}  We have:
\begin{align*}
\PE_\mu(\tau_Y&>t, \tau_X<t)\le \PE_\mu(\tau_X\in [t-t_0,t])+ \PE_\mu(\tau_Y>t, \tau_X<t-t_0)\\
&\le c_1+ \PE_\mu (|X_{\tau_X}-Y_{\tau_X}|\ge \delta)+\PE(\tau_Y>t, d(Y_{\tau_X},\partial D)\le \delta, \tau_X\le t-t_0)\\
&\le c_1+ c_3+\PE(\tau_Y>t, d(Y_{\tau_X},\partial D)\le \delta, \tau_X\le t-t_0)\\
&\le c_1+c_3+ \ES[\PE(\tau_Y>t-\tau_X|{\cal F}_{\tau_X}){\bf 1}_{d(Y_{\tau_X},\partial D)\le \delta, \tau_X\le t-t_0)}],
\end{align*}
where in the last line, we used the strong Markov property. The result follows.
\end{proof}
\bibliographystyle{alpha}
\bibliography{ref}

\appendix
\section{Postponed proofs}\label{sec:proofsappendix}
\begin{proof}[Proof of \cref{lemao}]
We prove that there exists $\ell\in\mathbb{N}^*$ such that
\begin{equation}\label{supkmufini}
\rho(\ell):=\sup_{(x,\mu)\in\MM\times{\cal P}(\MM)}K_{\mu,\partial}^\ell{\bf 1}(x)<1.
\end{equation}
Since $(x,\mu)\mapsto K_{\mu,\partial}^\ell{\bf 1}(x)$ is continuous on the compact space $\MM\times{\cal P}(\MM)$, it is enough to check that there exists $\ell$ such that for every $(x,\mu)\in \MM\times{\cal P}(\MM)$,  $K_{\mu,\partial}^\ell{\bf 1}(x)<1$. If it is not true, one can find a sequence $(x_n,\mu_n)$ such that $K_{\mu_n,\partial}^n{\bf 1}(x_n)=1$ for every $n$. Since $n\mapsto K_{\mu,\partial}^n{\bf 1}(x)$ is non-increasing, this implies that for every $(n,k)$ such that $k\le n$, $K_{\mu_n,\partial}^k{\bf 1}(x_n)=1$. Thus, for any limit point $(x^\star,\mu^\star)$ of $(x_n,\mu_n)_{n\ge1}$, $K_{\mu^\star,\partial}^k{\bf 1}(x^\star)=1$ for every $k\in\mathbb{N}$, which is a contradiction with the assumption of the lemma.
\end{proof}
\begin{proof}[Proof of \cref{lem:lip-prel}]
\noindent $(i)$ For every $n\in \mathbb{N}$, one sets
$$
 \kappa_n(\mu,\nu) = \sup_{\alpha \in {\cal P}(\MM)} \Vert \alpha \kmu^{n} - \alpha \knu^{n} \Vert_{TV} .
$$
We have $\kappa_0(\mu,\nu)=0$ and by \ref{condpdeux},
$$
\kappa_1(\mu,\nu) \le {c_2}  \Vert \mu- \nu \Vert_{TV}.
$$
Then, for every $n\ge0$,
\begin{align*}
\Vert \alpha\kmu^{n+1} - \alpha\knu^{n+1}\Vert_{TV}
&= \Vert \alpha\kmu \kmu^n - \alpha\knu \knu^n\Vert_{TV}\\
&\le \Vert \alpha\kmu \left[\kmu^n - \knu^n\right]\Vert_{TV}+\Vert \alpha\kmu \knu^n - \alpha\knu \knu^n\Vert_{TV}\\
&\le \kappa_n(\mu,\nu)+ \|\alpha\kmu-\alpha\knu\|_{TV}\\
&\le \kappa_n(\mu,\nu)+\kappa_1(\mu,\nu)\\
&\le \kappa_n(\mu,\nu)+c_2 \|\mu-\nu\|_{TV}\\
\end{align*}
An induction leads to
\begin{align*}
 \kappa_n(\mu,\nu)\le c_2 n\|\mu-\nu\|_{TV}.
\end{align*}
$(ii)$ First, since 
\begin{equation}\label{eq:deppsi}
R_\mu-R_\nu=\frac{1}{1-\varepsilon} (\kmu^\ell-\knu^\ell),
\end{equation}
the case $n=1$ follows from $(i)$.  Now, for $n\in \mathbb{N}$,
\begin{align*}
\sup_{\alpha\in{\cal P}(\MM)} \Vert \alpha R_\mu^{n+1} -  \alpha R_\nu^{n+1}  \Vert_{TV}&\le  \sup_{\alpha\in{\cal P}(\MM)} \Vert (\alpha R_\mu^{n}) R_\mu  -  (\alpha R_\mu^n) R_\nu  \Vert_{TV}\\
&+\sup_{\alpha\in{\cal P}(\MM)} \Vert (\alpha R_\mu^{n}) R_\nu - (\alpha  R_\nu^n) R_\nu  \Vert_{TV}\\
&\le \sup_{\alpha\in{\cal P}(\MM)} \Vert \alpha R_\mu  -  \alpha R_\nu  \Vert_{TV}+\sup_{\alpha\in{\cal P}(\MM)} \Vert \alpha R_\mu^{n}  - \alpha  R_\nu^n   \Vert_{TV}\\
&\le \frac{c_2\ell}{1-\varepsilon}\|\mu-\nu\|_{TV}+\sup_{\alpha\in{\cal P}(\MM)} \Vert \alpha R_\mu^{n}  - \alpha  R_\nu^n   \Vert_{TV}.
\end{align*}
The result follows by induction.
\end{proof}
\end{document}